\newcommand{\bR}{{\mathbb R}}
\newcommand{\bZ}{{\mathbb Z}}
\newcommand{\bN}{{\mathbb N}}
\newcommand{\bH}{{\mathbb H}}
\newcommand{\cB}{{\mathcal B}}
\newcommand{\cG}{{\mathcal G}}
\newcommand{\cP}{{\mathcal P}}
\newcommand{\cQ}{{\mathcal Q}}
\newcommand{\cR}{{\mathcal R}}
\newcommand{\cF}{{\mathcal F}}
\newcommand{\cH}{{\mathcal H}}
\newcommand{\cL}{{\mathcal L}}
\newcommand{\uu}{\mathbf{ u}}
\newcommand{\vv}{\mathbf{ v}}
\newcommand{\ww}{\mathbf{ w}}
\newcommand{\FF}{\mathbf{F}}
\newcommand{\GG}{\mathbf{G}}
\newcommand{\ee}{\mathbf{ e}}
\newcommand{\e}{\varepsilon}
\newcommand{\kp}{\kappa}
\newcommand{\supp}{\operatorname{supp}}
\newcommand{\dist}{{\rm dist}}
\newcommand{\rdiv}{{\rm div\,}}
\newcommand{\ra}{\rightarrow}
\newcommand\norm[1]{\Arrowvert {#1}\Arrowvert}
\newtheorem{theorem}{Theorem}[section]
\newtheorem{definition}[theorem]{Definition}
\newtheorem{lemma}[theorem]{Lemma}
\newtheorem{proposition}[theorem]{Proposition}
\newtheorem{remark}[theorem]{Remark}
\def\XXint#1#2#3{{\setbox0=\hbox{$#1{#2#3}{\int}$ }
\vcenter{\hbox{$#2#3$ }}\kern-.6\wd0}}
\def\permut#1#2{\left(\begin{array}{c}#1\\#2\end{array}\right)}
\title [Higher Regularity of the Free Boundary]{Higher Regularity of the Free Boundary in a Semilinear System}
\author{Morteza Fotouhi  and Herbert Koch }
\address{Department of Mathematical Sciences, Sharif University of Technology, Tehran, Iran}
\email{fotouhi@sharif.edu}
\address{Mathematisches Institut, Universit\"at Bonn, Endenicher Allee 60, 53115 Bonn, Germany}
\email{koch@math.uni-bonn.de}
\date{\today}
\begin{document}
    
\begin{abstract} 
In this paper we are concerned with  higher regularity properties  of the  elliptic system 
\[
\Delta\uu= |\uu|^{q-1}\uu\chi_{\{|\uu|>0\}},\qquad\uu=(u^1,\dots,u^m)
\]
for $0\leq q<1$. We show  analyticity of the regular part of the free boundary $\partial\{|\uu|>0\}$, analyticity of $|\uu|^{\frac{1-q}2} $ and $ \frac{\uu}{|\uu|}$  up to the regular part of the free boundary. 
Applying  a variant of the partial hodograph-Legendre transformation  and the implicit function theorem, we arrive at a degenerate equation, which introduces substantial  challenges  to be dealt with.

Along the lines of our  study, we also establish a Cauchy-Kowalevski type statement to show the local existence of solution when the free boundary and the restriction of $ \frac{\uu}{|\uu|} $ from both sides to the free boundary are given as analytic data. 
\end{abstract}
%
%
%

\maketitle

\section{Introduction}\label{introduction}

\subsection{Problem setting}
In this paper we  study the higher regularity properties of the free boundary of solutions to the following elliptic system 
\begin{equation}\label{elliptic-system}
\Delta\uu= |\uu|^{q-1}\uu\chi_{\{|\uu|>0\}},\qquad\uu=(u^1,\dots,u^m) , 
\end{equation}
where $0\leq q<1$,  $\uu: B_1\subset\bR^n\longrightarrow\bR^m$, $n\geq2$, $m\geq1$, 
and  $|\cdot|$ stands for the Euclidian norm on the respective spaces.

Problem \eqref{elliptic-system}  is studied in  \cite{asuw15, FSW20} and  in the scalar case ($m=1$)  in \cite{FS17}, 
where an  optimal growth rate for the solutions is proven as well as $C^{1,\beta}$-regularity of the free boundary at  {\it regular} points.
In addition, the scalar case with a further assumption of a positive solution has been extensively studied in \cite{alt1986free, phillips1983hausdoff, phillips1983minimization}; for the case of  fully nonlinear operators see  \cite{wu2022fully}, and for the parabolic version see \cite{aleksanyan2022regularity}.

Our aim with  this paper is to improve the  existing  results by showing that the regular part of the  free boundary  is analytic.

Throughout the paper, we fix the following constants:
\begin{align*}
&\kp=\frac2{1-q},
&\alpha=(\kp(\kp-1))^{-\kp/2}.
\end{align*}
We observe that $ \kappa \in [2,\infty)$ for $ q \in [0,1)$. 
We denote by $\Gamma(\uu)=\partial\{|\uu|>0\}$ the free boundary and define the subset $\Gamma^\kp(\uu)$ as follows
\[
\Gamma^\kp(\uu):=\left\{z\in\Gamma(\uu): \partial^\mu\uu(z)=0,\text{ for all }|\mu|<\kp\right\}.
\]
In \cite{asuw15, FSW20}, it is  proven that the solution of \eqref{elliptic-system} has $\kp$-growth close to points in $\Gamma^\kp(\uu)$, indeed $\norm{\uu}_{L^\infty(B_r(z))}=O(r^\kp)$ when $z\in \Gamma^\kp(\uu)$. 
With this regularity, we can get a sense of the blow-up limit  of $\uu(rx+z)/r^\kp$  at each point $z\in\Gamma^\kp$, which is a $\kp$-homogeneous global solutions of \eqref{elliptic-system} (thanks to the energy monotonicity).
When the domain is a plane, i.e. $n=2$, all $\kp$-homogeneous global solutions are classified (see \cite{FS17, FSW20}).
An important class of global solutions  is the class of  
 {\it half-space} solutions, defined  by
\[
\bH:=\left\{
x\mapsto\alpha\max(x\cdot\nu,0)^\kp\ee: \nu\in\bR^n 
\hbox{ and } \ee\in\bR^m \hbox{ are unit vectors}
 \right\}.
\]
\begin{definition}\label{def-regular-sing-FB}
We denote by $\cR_{\uu}$ the set of all (regular free boundary) points $x_0\in\Gamma(\uu)$ such that  at least one blow-up limit of $\uu$ at $x_0$ is in $\bH$. 
\end{definition}
 The terminology {\it regular free boundary points} is justified by the  uniqueness of blow-up, as it has been proved in \cite{asuw15, FSW20}. Again by  \cite{asuw15, FSW20}  the set of regular free boundary points $\cR_{\uu}$ is open relative to $\Gamma(\uu)$ and locally is a $C^{1,\beta}$-manifold. Our first main result in this paper is presented in the following theorem.

\begin{theorem}\label{main-result}
The set of regular free boundary points $\cR_{\uu}$ is locally an analytic manifold.
In addition,   $\uu/|\uu|$ and $ |\uu|^{\frac1{\kappa}}  $ are analytic in $\overline{\{|\uu|>0\}}$ in a neighborhood of $\cR_{\uu}$.  In particular, when $\kp$ is integer the solution will be analytic.
\end{theorem}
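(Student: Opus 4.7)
The plan is to work in a small neighborhood of an arbitrary $x_0\in\cR_{\uu}$, exploiting the known fact that $\Gamma(\uu)$ is locally a $C^{1,\beta}$ graph and that $\uu$ is asymptotic at $x_0$ to a unique half-space profile $\alpha((x-x_0)\cdot\nu)_+^{\kappa}\ee$. After translation and rotation I assume $x_0=0$ and $\nu=e_n$, and introduce the change of unknowns
$$V(x) := |\uu(x)|^{1/\kappa}, \qquad \omega(x) := \uu(x)/|\uu(x)|, \qquad \uu = V^\kappa \omega,$$
in the positivity set $\{|\uu|>0\}$. Then $V$ is Lipschitz with $\nabla V(0)\approx\alpha^{1/\kappa}e_n\neq 0$, and $\omega$ extends continuously to $\Gamma(\uu)$ with value $\ee$ at $0$. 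A direct computation from \eqref{elliptic-system}, using $|\omega|^2\equiv 1$ (so $\omega\cdot\nabla\omega=0$ and $\omega\cdot\Delta\omega=-|\nabla\omega|^2$), reduces the system to the coupled pair
$$\kappa V\Delta V + \kappa(\kappa-1)|\nabla V|^2 - V^2|\nabla\omega|^2 = 1,$$
$$V\Delta\omega + 2\kappa\,\nabla V\cdot\nabla\omega + V|\nabla\omega|^2\,\omega = 0,$$
valid in $\{V>0\}$. These equations are analytic in their arguments but carry a factor $V$ that vanishes precisely on the free boundary, so the system is of degenerate (Fuchsian) elliptic type at $\Gamma(\uu)$.

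Next I would flatten the free boundary via a partial hodograph transformation: since $\partial_nV\neq 0$ near $0$, the map $(y',\tau):=(x',V(x))$ is a $C^{1,\beta}$ diffeomorphism of a one-sided neighborhood of $0$ onto a half-ball $\{\tau\geq 0\}\cap B_r$ that carries $\Gamma(\uu)$ to $\{\tau=0\}$. Writing $x_n=\psi(y',\tau)$ and $\tilde\omega(y',\tau)=\omega(y',\psi(y',\tau))$ and pushing the two equations forward yields a fully nonlinear analytic system of the schematic form
$$\tau\,\mathcal{L}_i[\psi,\tilde\omega] + \mathcal{M}_i[\psi,\tilde\omega] = 0, \qquad i=1,2,$$
with $\mathcal{L}_i$ second-order and $\mathcal{M}_i$ first-order, and with boundary data $\psi(y',0)$ (the free-boundary graph) and $\tilde\omega(y',0)$, both a priori only $C^{1,\beta}$.

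With this reformulation in hand, the theorem reduces to showing that $(\psi,\tilde\omega)$ is real-analytic up to $\{\tau=0\}$. I would proceed in two steps. First, bootstrap from $C^{1,\beta}$ to $C^\infty$ up to $\{\tau=0\}$ by linearizing around the flat half-space solution $\psi_0(y',\tau)=\alpha^{-1/\kappa}\tau$, $\tilde\omega_0\equiv\ee$: the linearized operator is of Baouendi--Goulaouic (Fuchsian elliptic) type, invertible on weighted Hölder or Sobolev scales adapted to the $\tau$-degeneracy, and a fixed-point or implicit function argument built on this inverse yields smooth regularity up to the boundary. Second, upgrade smoothness to analyticity via the Cauchy--Kowalevski statement announced in the abstract: prescribing $\psi(y',0)$ and $\tilde\omega(y',0)$ as analytic Cauchy data, one constructs, via majorant estimates tailored to the Fuchsian operator, a unique real-analytic solution of the transformed system, and a uniqueness theorem for the degenerate Cauchy problem identifies it with $(\psi,\tilde\omega)$. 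Transporting back, the free boundary is the analytic graph $\{x_n=\psi(y',0)\}$, and $V,\omega$ are analytic on the closed positivity set near $0$; when $\kappa\in\bN$ this also yields analyticity of $\uu=V^\kappa\omega$.

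The main obstacle is the degeneracy at $\{\tau=0\}$: classical analytic elliptic regularity of Morrey--Nirenberg type requires uniform ellipticity and applies only away from the free boundary, whereas the transformed system loses ellipticity exactly where we need analyticity. The delicate technical issue is to design analytic scales of function spaces with the correct $\tau$-weight so that the nonlinear remainders $V|\nabla\omega|^2$, $\nabla V\cdot\nabla\omega$ and the Jacobian of the hodograph transformation close under differentiation and composition. The Cauchy--Kowalevski route is attractive because it converts this analytic implicit-function-in-weighted-spaces problem into an explicit majorant construction, at the cost of establishing a matching (classical but nontrivial) uniqueness statement for the degenerate boundary value problem.
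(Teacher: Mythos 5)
Your reduction of the problem to a Fuchsian degenerate elliptic system via a partial hodograph transformation is in the right spirit, and your observation that Morrey--Nirenberg analytic regularity fails at $\{\tau=0\}$ correctly identifies the central difficulty. The change of unknowns differs from the paper's in two minor ways: the paper uses $(u^1/\alpha)^{1/\kappa}$ as the new vertical variable rather than $|\uu|^{1/\kappa}$, and replaces $\omega=\uu/|\uu|$ by the unconstrained ratios $v^j=\alpha u^j/u^1$, which avoids carrying the pointwise constraint $|\omega|\equiv1$ through the nonlinear estimates. These are choices of convenience, not of substance.

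The genuine gap is in your second step, the passage from $C^\infty$ to analyticity. You propose to ``prescribe $\psi(y',0)$ and $\tilde\omega(y',0)$ as analytic Cauchy data'' and invoke a Cauchy--Kowalevski existence statement plus uniqueness. But $\psi(y',0)$ \emph{is} the free-boundary graph, whose analyticity is precisely what the theorem asserts; at this stage you only know it is $C^\infty$, so you have no analytic Cauchy data to feed into Cauchy--Kowalevski. The argument is circular. The paper's Cauchy--Kowalevski statement (Theorem~\ref{thm:existence}) is a genuinely separate result, used to construct solutions when the free boundary is \emph{given} to be analytic; it plays no role in proving Theorem~\ref{main-result}. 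What the paper does instead, and what your proposal is missing, is the infinitesimal-translation device: one constructs an analytic family of diffeomorphisms $\Phi_a$ generating tangential translations, considers the pulled-back equations $\FF_a(\vv_a,\cdot)=0$ for $\vv_a=\vv\circ\Phi_a$, and applies the \emph{analytic} implicit function theorem in the Banach scale $W^{3,p}_*$ to conclude that $a\mapsto\vv_a$ is analytic. This yields tangential analyticity of $\vv$ (hence analyticity of the boundary trace $v^1(y',0)$) directly, with no circularity, because it converts the open-ended question ``is this particular solution analytic in $y'$?'' into a closed question ``is this parametrized family of solutions analytic in the parameter $a$?'', which the implicit function theorem answers. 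Only after tangential analyticity is secured does the paper run a separate majorant induction (Proposition~\ref{prop:analyticity-general-case}) to propagate analyticity into the normal direction using the Fuchsian structure $y_n\partial_n^2+\gamma\partial_n$. Without something replacing the translation trick, your proposal does not close.
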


Our proof is based on changing coordinates  with a  partial  Hodograph-Legendre transform involving dependent and independent variables so that we obtain a degenerate elliptic problem on a fixed domain. 
This method first has been  introduced in \cite{kinderlehrer1977regularity} to show the analyticity of the  free boundary 
in the classical obstacle problem.  
It has been also applied   to the  thin obstacle problem  or the Signorini problem  in \cite{koch2015higher, koch2017variable, koch2019higher} where  it leads to a  fully nonlinear Baoundi-Grushin equation. 
 Our transform is closer to Section 2.1, Theorem 1.4 of Friedman \cite{MR1009785}.

In our case the approach leads to a nonlinear degenerate  PDE in a half space 
similar  to the stationary  case of  the porous medium equation studied  by Daskaloupolos and Hamilton \cite{MR1474840} 
and  with different approach by the second author \cite{kochnon}. 
We need regularity results for such linear and nonlinear problems. 
According to \cite{MR1474840}, estimates in H\"older spaces can be found for the parabolic problem of the porous medium equation.
These estimates in Sobolev spaces have been proven by the second author \cite{kochnon}.
To keep the paper self-contained  we will provide a complete  proof for the elliptic case in Sobolev spaces.

In this study, we deduce analyticity of the free boundary by an application of  the implicit function theorem instead of direct and tedious elliptic estimates.
In doing so we follow the ideas that have been used in \cite{angenent1990nonlinear, angenent1990parabolic, koch2012geometric, koch2017variable} and consider the transformed fully nonlinear equation as a perturbation of a linear degenerate operator between appropriate function spaces (Section \ref{section:main-result}).

We also establish a  Cauchy-Kowalevski type statement to show the existence of  a solution  for \eqref{elliptic-system}  when the free boundary is known and assumed to be analytic.

\begin{theorem}\label{thm:existence}
Let  $\Gamma\subset \bR^n$ be any given  oriented analytic hypersurface, and 
$ V_\pm  : \Gamma \to \mathbb{S}^{m-1}$ given  analytic maps. Then on each side of $\Gamma $ 
there is  a unique local  solution $\uu_\pm $,  of \eqref{elliptic-system}, that   are   analytic up to the boundary and satisfy
\[ \frac{\uu_\pm(x)}{| \uu_\pm(x)|}\to V_\pm   \]
as $ x \to \Gamma $ from respective  sides. 
\end{theorem}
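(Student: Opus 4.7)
The problems for $\uu_+$ and $\uu_-$ decouple since both vanish on $\Gamma$, so I only construct $\uu_+$ on the positive side; the other side is symmetric. The plan is to localize near a point $x_0\in\Gamma$, flatten $\Gamma$ to $\{y_n=0\}$ by an analytic diffeomorphism $\Phi$ (available because $\Gamma$ is analytic), and resolve the resulting degenerate boundary-value problem by the analytic implicit function theorem in Banach scales of real-analytic functions, which will serve as a characteristic Cauchy--Kowalevski statement.

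In the flattened coordinates, \eqref{elliptic-system} becomes $\tilde L\tilde\uu=|\tilde\uu|^{q-1}\tilde\uu$ in $\{y_n>0\}$ for an elliptic second-order operator $\tilde L$ with analytic coefficients and boundary data $\tilde V_+:=V_+\circ\Phi^{-1}$. Decompose $\tilde\uu=\rho^{\kp}V$ with $\rho=|\tilde\uu|^{1/\kp}\ge 0$ and $V\in\bS^{m-1}$. Using $\kp q=\kp-2$, the identity $|\tilde\uu|^{q-1}\tilde\uu=\rho^{\kp-2}V$ splits the equation, upon projection onto $V$ and onto $T_V\bS^{m-1}$, into the coupled system
\begin{align*}
\kp\rho\tilde L\rho+\kp(\kp-1)|\nabla\rho|_{\tilde L}^2-\rho^2|\nabla V|_{\tilde L}^2&=1,\\
2\kp\langle\nabla\rho,\nabla V\rangle_{\tilde L}+\rho(\tilde L V+|\nabla V|_{\tilde L}^2V)&=0.
\end{align*}
Substituting $\rho=y_n\sigma$ with $\sigma>0$ reformulates the problem as a degenerate boundary-value problem for $(\sigma,V)$, sought analytic up to the boundary with $V|_{y_n=0}=\tilde V_+$.

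I would then perform an indicial analysis: expand $\sigma=\sum_{k\ge 0}\sigma_k(y')y_n^k$ and $V=\sum_{k\ge 0}V_k(y')y_n^k$. The scalar equation at $y_n=0$ forces $\sigma_0=(\kp(\kp-1))^{-1/2}$, a universal constant matching the half-space solutions in $\bH$, while $V_0=\tilde V_+$ is prescribed. At each order $k\ge 1$ the two equations give a linear system for $(\sigma_k,V_k)$ whose right-hand side depends only on $(\sigma_j,V_j)_{j<k}$; a direct bookkeeping of $\partial_n^2(y_n\sigma)$ together with the sphere constraint $|V|=1$ shows the diagonal coefficients to be $\kp(k+1)(k+2\kp-2)\sigma_0$ (for $\sigma_k$ in the scalar equation) and $k(2\kp+k-1)\sigma_0$ (for $V_k$ in the tangential equation), both nonzero for $k\ge 1$ and $\kp\ge 2$. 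Hence all Taylor coefficients at $\{y_n=0\}$ are uniquely determined, from which uniqueness of any analytic solution follows.

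To upgrade the formal series to a genuine analytic solution I would apply the analytic implicit function theorem. View the coupled system as $F(\sigma,V;\tilde V_+)=0$ with $F$ analytic between Banach scales of real-analytic functions on $B'_r\times[0,\delta)$ with weights compatible with the $y_n$-degeneracy. The linearization $DF$ at the reference $(\sigma_0,\tilde V_+\circ\pi)$, where $\pi(y',y_n):=y'$ is the projection, is precisely a linear operator of the degenerate elliptic type analyzed in Section~\ref{section:main-result}; on the appropriate analytic scale it is an isomorphism, so the analytic IFT produces a unique local analytic $(\sigma,V)$ depending analytically on $\tilde V_+$, and undoing the substitution $\rho=y_n\sigma$ together with $\Phi$ delivers the desired $\uu_+$. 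The main obstacle is precisely this isomorphism statement: the boundary $\{y_n=0\}$ is characteristic for the linearized system (of Baouendi--Grushin type), so the classical Cauchy--Kowalevski majorant method does not apply directly, and one must invoke the analytic-scale framework developed for Theorem~\ref{main-result}, which therefore does double duty for existence as well as regularity.
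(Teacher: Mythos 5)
Your decomposition is genuinely different from the paper's. The paper sets up the existence proof on the \emph{Legendre function} $\vv=(v^1,\dots,v^m)$ introduced via the hodograph map in Section~\ref{Section:Hodograph}: the Cauchy data is $v_0^1 = \tilde v_0^1-y_n$, $v_0^j=(V_+\cdot\ee_j)/(V_+\cdot\ee_1)$, and the PDE is system~\eqref{degenerate-eq}, for which every component is an equation of the clean form $y_n\Delta + \gamma\partial_n = (\text{lower order})$. You instead flatten $\Gamma$, write $\tilde\uu = \rho^\kp V$ with $|V|=1$, and substitute $\rho = y_n\sigma$. Your projected system and indicial coefficients $\kp\sigma_0(k+1)(k+2\kp-2)$ and $\sigma_0 k(2\kp+k-1)$ are both correct (I verified the bookkeeping, including the $\sigma\,\partial_n^2\sigma$ contribution one can easily miss), and the identification $\sigma_0=(\kp(\kp-1))^{-1/2}=\alpha^{1/\kp}$ matches the half-space profile. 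So up through the formal Taylor expansion and uniqueness of the jet, your route is sound.

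The gap is in the final step, and you name it yourself: you say \emph{``the analytic-scale framework developed for Theorem~\ref{main-result}\dots therefore does double duty for existence as well as regularity''}, but this misreads the paper. The implicit-function-theorem argument in Section~\ref{section:main-result} is carried out between \emph{Sobolev} spaces $W^{3,p}_*\to W^{1,p}$ (Lemma~\ref{lem-norm-operator-eta-P}) to show analyticity of an \emph{already existing} solution; it is not an IFT on a Banach scale of analytic functions, and it does not produce solutions. For existence the paper does something else: it builds the Taylor series directly and proves convergence by majorant estimates in the weighted seminorms $\|\cdot\|_{s,R,r}$, exactly as in Proposition~\ref{prop:analyticity-general-case} (Steps 3--7), i.e.\ a Cauchy--Kowalevski-type argument adapted to the characteristic/degenerate boundary. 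Leaving your hardest step at ``apply the analytic IFT on the appropriate scale'' is not a proof, and the framework you point to does not supply it.

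There is also a structural obstruction you do not address: after the substitution $\rho = y_n\sigma$, the $V$-equation does put itself into the form $y_n\Delta V + 2\kp\,\partial_n V = \dots$, but the $\sigma$-equation does not. Expanding the scalar projection and writing $\sigma=\sigma_0+\tau$, the leading linear part is
\[
y_n^2\Delta\tau + 2\kp\, y_n\partial_n\tau + 2(\kp-1)\tau = (\text{nonlinear/lower order}),
\]
which is of Fuchsian (Euler) type, not of the type $y_n\Delta + \gamma\partial_n$ covered by the paper's Proposition~5.10 under assumption~\eqref{analytic-system-conditions}. So even if one tried to quote the paper's majorant proposition verbatim, the $\sigma$-component would not satisfy its hypotheses. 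This is precisely why the paper works with the Legendre variables rather than a polar decomposition: the Legendre formulation makes \emph{all} $m$ components fall into the single degenerate class that the Appendix and Proposition~\ref{prop:analyticity-general-case} handle. Your plan would need a separate convergence argument for the $\sigma$-equation (or a change of unknown absorbing the zeroth-order term), and until that is supplied the existence claim is unproved.
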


\subsection{Notation} 
For clarity of exposition we shall introduce some notation and definitions here  that are used frequently in the paper.

Throughout this paper, $\bR^n$ will be equipped with the Euclidean inner product $x \cdot y$ and the induced norm $|x|$, $B_r(x_0)$ will denote the open $n$-dimensional ball of center $x_0$, radius $r$ with the boundary $\partial B_r(x_0)$. 
In addition, $B_r=B_r(0)$, $\partial B_r=\partial B_r(0)$ and $B_r^+=B_r\cap\{x_n>0\}$.
 We shall also denote  the $n$-dimensional Hausdorff measure  by  $\cH^n$.


Here we introduce the multi-indices notation that is used in Section  \ref{section:main-result}.
For a multi-index $\mu=(\mu_1,\cdots,\mu_n)\in \bZ^n_+$, we denote  
\[
\mu!=\prod_{i=1}^n\mu_i!, \qquad |\mu|=\mu_1+\dots+\mu_n, \qquad x^\mu=x_1^{\mu_1}\cdots x_n^{\mu_n}.
\]
Let $\mu$ and  $\sigma$ be two multi-index, we say $\sigma\leq \mu$ when $\sigma_i\leq\mu_i$ for $i=1,\cdots,n$. 
We also use notation $0<\sigma$ to say $0<|\sigma|$ as well as $\sigma<\mu$ stands for the case $\sigma\leq \mu$ and $\sigma\neq\mu$.
When $\sigma\leq \mu$, we will denote
\[
 \permut\mu\sigma=\frac{\mu!}{\sigma!(\mu-\sigma)!}=\prod_{i=1}^n\permut{\mu_i}{\sigma_i}.
\]
The following relations is also helpful in our proofs
\begin{equation*}\label{rel-permut}
\sum_{|\sigma|=s}\permut\mu\sigma = \permut{|\mu|}s,
\end{equation*}
which has the consequence
\begin{equation*}
\permut\mu\sigma \le \permut{|\mu|}{|\sigma|}.
\end{equation*}

\subsection{Outline of the paper}
The paper is organized as follows: 
In Section \ref{section:regularity}, we obtain some results concerning  the regularity of  solutions near the regular points of  the free boundary, that are essential  for the rest  of the paper.
In Section \ref{Section:Hodograph} we invoke  the  classical partial hodograph transformation and show that it is a diffeomorphism. 
 Section \ref{section:degenerate} is devoted to  improve the a priori regularity of  the Legendre functions.  
Section \ref{section:main-result} contains    the proof of our main results, Theorem \ref{main-result} and Theorem \ref{thm:existence}. 
Finally, in Appendix we give a short proof for the regularity of the linear degenerate equation that we obtain after the transformation.


\section{Some regularity results for solutions }\label{section:regularity}

In this section, we give a brief overview of some tools and known results about the regularity of 
solutions and the  free boundary for problem \eqref{elliptic-system}. 
We shall also introduce some new results, 
which are crucial in next sections to follow. 
Indeed,   near the regular parts of the  free boundary, solutions behave more smoothly and we can find out some more  information. 
We start  with introducing the monotonicity formula, that are the corner stone in the regularity theory of free boundary. 

\begin{proposition}[Proposition 2.3 in \cite{FSW20}]\label{monotonicity formula}
Let $\uu$ be a solution of \eqref{elliptic-system} in $B_{r_0}(x_0)$ and let 
$$W(\uu,x_0,r)=\frac1{r^{n+2\kp-2}}\int_{B_r(x_0)}\big(|\nabla\uu|^2+\frac2{1+q}|\uu|^{1+q}\big)\,dx-\frac\kp{r^{n+2\kp-1}}\int_{\partial B_r(x_0)}|\uu|^2\,d\cH^{n-1}.$$
(i) For $0<r<r_0$, the energy function $W(\uu,x_0,r)$ is non-decreasing.\\
(ii) If $W(\uu,x_0,r)$ is constant for $r_1<r<r_2$, then $\uu$ is $\kp$-homogeneous with respect to $x_0$, i.e.
\[\uu(x_0+rx)=(\frac{r}{r_1})^\kp\uu(x_0+r_1x),\quad x\in B_{1}(0),\text{ and }r_1<r<r_2.\]
(iii) The function $x\mapsto W(\uu,x,0+)$ is upper-semicontinuous.
\end{proposition}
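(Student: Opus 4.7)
The plan is to prove the proposition by the standard Weiss-type monotonicity argument. Normalizing $x_0=0$, I introduce the parabolic rescaling $\uu_r(y) := r^{-\kp}\uu(ry)$ on $B_1$. The algebraic heart of the matter is that the choice $\kp = 2/(1-q)$ is precisely the scaling exponent that makes $\uu_r$ satisfy the same equation $\Delta \uu_r = |\uu_r|^{q-1}\uu_r\chi_{\{|\uu_r|>0\}}$ on $B_1$. A change of variables then rewrites $W(\uu,0,r)$ in terms of $\uu_r$ as the \emph{intrinsic} functional
\begin{equation*}
\widetilde W(r) = \int_{B_1}\left(|\nabla \uu_r|^2 + \tfrac{2}{1+q}|\uu_r|^{1+q}\right) dy - \kp\int_{\partial B_1}|\uu_r|^2\,d\cH^{n-1},
\end{equation*}
so that $r$-differentiation of $W$ reduces to differentiation of $\widetilde W$, with natural test function $\vv := \partial_r \uu_r = r^{-1}(y\cdot\nabla \uu_r - \kp \uu_r)$.

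Next I would compute $\frac{d}{dr}\widetilde W(r)$ term by term. Integration by parts on the Dirichlet term produces a bulk contribution $-2\int_{B_1}\Delta \uu_r\cdot \vv\,dy$ together with the boundary piece $2\int_{\partial B_1}(y\cdot\nabla\uu_r)\cdot \vv\,d\cH^{n-1}$. Using the PDE, the bulk contribution equals $-2\int_{B_1}|\uu_r|^{q-1}\uu_r\cdot\vv\,\chi_{\{|\uu_r|>0\}}\,dy$, which exactly cancels the derivative of the potential term $\frac{2}{1+q}|\uu_r|^{1+q}$; the constant $\frac{2}{1+q}$ is picked precisely to produce this cancellation. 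Differentiating the surface term contributes $-2\kp\int_{\partial B_1}\uu_r\cdot \vv\,d\cH^{n-1}$, and collecting the survivors gives
\begin{equation*}
\frac{d}{dr}\widetilde W(r) = 2\int_{\partial B_1}(y\cdot\nabla \uu_r - \kp \uu_r)\cdot \vv\,d\cH^{n-1} = 2r\int_{\partial B_1}|\vv|^2\,d\cH^{n-1}\ge 0,
\end{equation*}
which is (i). For (ii), constancy of $W$ on $(r_1,r_2)$ forces $\vv\equiv 0$ on $\partial B_1$ for every $r\in(r_1,r_2)$; undoing the rescaling this reads $z\cdot\nabla\uu(z) = \kp\,\uu(z)$ throughout the annulus $\{r_1 < |z| < r_2\}$, and Euler's identity combined with the equation extends the resulting $\kp$-homogeneity relation to the stated formula.

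For (iii), the monotonicity in (i) shows $W(\uu,x,0+) = \inf_{r>0} W(\uu,x,r)$. For each fixed $r$, continuity of $\uu$ and $\nabla \uu$ in $L^2_{\mathrm{loc}}$ together with the absolute continuity of the integral make the map $x\mapsto W(\uu,x,r)$ continuous, and an infimum of continuous functions is upper semicontinuous.

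The main obstacle is to justify the integration by parts and the bulk cancellation near the free boundary, where $\uu$ is only $C^{1,1}$ and the indicator $\chi_{\{|\uu|>0\}}$ enters the equation. The required inputs are: $\uu \in C^{1,1}_{\mathrm{loc}}$ so that $D^2\uu \in L^\infty_{\mathrm{loc}}$; the Stampacchia-type fact that $\nabla\uu = 0$ a.e.\ on $\{|\uu|=0\}$, hence also $\vv = 0$ a.e.\ there; and the corresponding identity $\Delta \uu\cdot \vv = |\uu|^{q-1}\uu\cdot \vv\,\chi_{\{|\uu|>0\}}$ a.e. With these in hand the boundary integrals can be interpreted via an approximation or a Fubini argument along level sets of $r$, and the indicator can be inserted or removed freely in the algebraic cancellation.
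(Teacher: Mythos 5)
The paper does not prove this proposition; it cites it verbatim as Proposition~2.3 of \cite{FSW20} and uses it as a black box. So there is no proof in the paper to compare against. Your proposal is the standard Weiss-type monotonicity argument and it is correct in substance: the rescaling $\uu_r(y)=r^{-\kp}\uu(ry)$ with $\kp=2/(1-q)$ is indeed the invariance of the equation, the change of variables does turn $W(\uu,0,r)$ into the intrinsic functional $\widetilde W(r)$, the cancellation between $-2\int\Delta\uu_r\cdot\vv$ and the derivative of the potential term relies exactly on the a.e.\ vanishing of $\vv$ on $\{|\uu_r|=0\}$ (Stampacchia applied componentwise, since $\{|\uu|=0\}\subset\{u^j=0\}$ for each $j$), and the resulting derivative $\frac{d}{dr}\widetilde W = 2r\int_{\partial B_1}|\vv|^2$ gives (i) and (ii). The argument for (iii), that the limit from monotonicity is an infimum of the continuous functions $x\mapsto W(\uu,x,r)$ hence upper semicontinuous, is also correct.

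Two small remarks. First, the regularity you invoke is slightly off: for general $q\in[0,1)$ the solution is only $C^{1,\alpha}_{\mathrm{loc}}$ for $\alpha<1$ (it is $C^{1,1}$ only at $q=0$); fortunately $C^{1,\alpha}$ together with $W^{2,p}_{\mathrm{loc}}$ is more than enough for the Stampacchia fact and the integration by parts. Second, in part (ii) your derivation yields Euler's relation $z\cdot\nabla\uu(z)=\kp\,\uu(z)$ on the annulus $\{r_1<|z-x_0|<r_2\}$, i.e.\ homogeneity of degree $\kp$ there; the displayed identity as literally written in the proposition involves points of $B_{r_1}(x_0)$ as well (take $|x|<1$), so deducing it requires either reading it as a statement on the annulus or a further propagation argument. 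This is an imprecision in the statement itself and not a flaw in your reasoning, but it is worth being aware that Euler's identity on the annulus is exactly what the differential identity delivers and nothing more.
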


\medskip

The next proposition presents the optimal growth for the solution near $\Gamma^\kp(\uu)$.

\begin{proposition}[Theorem 1.2 in  \cite{FSW20}]\label{growth-estimate}
Let $\uu$ be a solution of \eqref{elliptic-system}  and $x_0\in \Gamma^\kp(\uu)$. Then there exists the positive constant $C$ such that
\[
\sup_{B_r(x_0)}|\uu|\leq Cr^\kp.
\]
Moreover, the  constant  $C$ is uniform in any compact subset of $\Gamma^\kp(\uu)$.
\end{proposition}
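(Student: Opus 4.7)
The plan is to argue by contradiction and exploit the Weiss energy monotonicity from Proposition~\ref{monotonicity formula}. Suppose the pointwise bound fails at $x_0 \in \Gamma^\kp(\uu)$; then there is a sequence $r_j \to 0$ with $M_j := r_j^{-\kp}\sup_{B_{r_j}(x_0)}|\uu| \to \infty$. After a standard maximality/dichotomy selection I may arrange additionally that $\sup_{B_s(x_0)}|\uu| \le M_j s^\kp$ for all $s \in [r_j, s_0]$. I then consider the rescaled sequence
\[
\uu_j(x) := \frac{\uu(x_0+r_j x)}{M_j\, r_j^\kp},
\]
which, using the defining identity $\kp(1-q)=2$, solves $\Delta \uu_j = M_j^{q-1}|\uu_j|^{q-1}\uu_j\,\chi_{\{|\uu_j|>0\}}$ on $B_{s_0/r_j}$. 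The selection gives $\sup_{B_1}|\uu_j|\ge 1$ and $\sup_{B_R}|\uu_j|\le R^\kp$ for $R\le s_0/r_j$.

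Since $q<1$ one has $M_j^{q-1}\to 0$, and standard elliptic estimates extract a subsequential locally uniform limit $\uu_j \to \uu_\infty$ where $\uu_\infty$ is harmonic on $\bR^n$, has polynomial growth $|\uu_\infty(x)|\le |x|^\kp$ for $|x|\ge 1$, and satisfies $\sup_{B_1}|\uu_\infty|\ge 1$. By Liouville, $\uu_\infty$ is a harmonic polynomial of degree at most $\kp$. Now I transport the monotonicity formula through the scaling: a direct change of variables, again using $\kp(1-q)=2$, yields
\[
W(\uu,x_0,r_j R) \;=\; M_j^{2} A_j(R) \;+\; \frac{2}{1+q}\, M_j^{1+q} B_j(R),
\]
where $A_j(R)$ is the Weiss functional of $\uu_j$ on $B_R$ and $B_j(R)$ denotes the rescaled $L^{1+q}$ term. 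Proposition~\ref{monotonicity formula}(i) bounds the left-hand side by $W(\uu,x_0,s_0)$; dividing by $M_j^2$ and letting $j\to\infty$ (so $M_j^{q-1}\to 0$ controls the second term) forces the limiting functional $A_\infty(R)$ of $\uu_\infty$ to vanish for every $R>0$. A short computation on a harmonic polynomial identifies this with the statement that every non-trivial homogeneous component of $\uu_\infty$ is of degree exactly $\kp$.

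Finally I combine this with the defining property $x_0\in\Gamma^\kp(\uu)$: every derivative $\partial^\mu \uu(x_0)$ with $|\mu|<\kp$ vanishes, and by the scaling this vanishing is inherited by $\uu_\infty$ at the origin. If $\kp$ is not an integer, $\uu_\infty$ is a polynomial of degree $\le \lfloor\kp\rfloor<\kp$ whose Taylor jet at $0$ is trivial, so $\uu_\infty\equiv 0$, contradicting $\sup_{B_1}|\uu_\infty|\ge 1$. If $\kp$ is an integer, the conclusion that $\uu_\infty$ is $\kp$-homogeneous does not yet give a contradiction, and I would supplement the argument by observing that $w=|\uu|$ satisfies $\Delta w \ge w^q$ in $\{w>0\}$ (Kato's inequality applied to the semilinear system), which allows comparison with the explicit scalar barrier $\alpha x_+^\kp$ to rule out a non-trivial $\kp$-homogeneous harmonic limit. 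The uniformity of $C$ on a compact set $K\Subset \Gamma^\kp(\uu)$ then follows by iterating the same blow-up argument together with the upper semi-continuity of $x\mapsto W(\uu,x,0+)$ from Proposition~\ref{monotonicity formula}(iii) and a standard covering argument.

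\textbf{Main obstacle.} The most delicate step is ruling out a non-trivial $\kp$-homogeneous harmonic blow-up when $\kp$ happens to be an integer: the vanishing of lower-order derivatives dictated by $\Gamma^\kp$ does not, by itself, preclude a non-zero harmonic polynomial of degree exactly $\kp$. Overcoming this requires extra information from the equation beyond the Weiss monotonicity, typically the subsolution property of $|\uu|$ (or equivalently a pointwise comparison with barriers built from the scalar problem).
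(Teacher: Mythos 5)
This proposition is not proved in the paper at all: it is imported verbatim from \cite{FSW20} (Theorem~1.2 there), so there is no ``paper's own proof'' to compare your argument against. What follows is therefore an assessment of your sketch on its own terms.

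The overall scheme (contradiction $\Rightarrow$ anomalous rescaling $\uu_j=\uu(x_0+r_j\cdot)/(M_jr_j^\kp)$ $\Rightarrow$ harmonic limit $\uu_\infty$ $\Rightarrow$ Weiss energy forces $\kp$-homogeneity $\Rightarrow$ contradiction with $\Gamma^\kp$) is the standard route and the scaling computations are correct, including the identity $W(\uu,x_0,r_jR)=M_j^2A_j(R)+\tfrac{2}{1+q}M_j^{1+q}B_j(R)$. One small but real omission: dividing by $M_j^2$ and invoking monotonicity only yields $\limsup_j A_j(R)\le 0$; to conclude $A_\infty(R)=0$ you also need $A_\infty(R)\ge 0$. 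This does hold, because $\uu_\infty$ is harmonic with all derivatives of order $<\kp$ vanishing at $0$, so its Almgren frequency satisfies $N(0+)\ge\kp$, and for a harmonic function $A_\infty(R)$ has the sign of $N(R)-\kp$. You should say this explicitly rather than asserting that the upper bound alone ``forces'' vanishing.

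The genuine gap is the one you yourself flag: when $\kp\in\bZ$, the Weiss monotonicity allows $\uu_\infty$ to be a non-zero $\kp$-homogeneous harmonic polynomial, and the hypothesis $x_0\in\Gamma^\kp$ (which kills only derivatives of order $<\kp$ of $\uu_j$ at $0$, and hence of $\uu_\infty$) does not exclude this. Your proposed fix does not close the gap. Kato's inequality gives $\Delta|\uu|\ge|\uu|^q$ on $\{|\uu|>0\}$, which is a \emph{subsolution} property: comparison with the barrier $\alpha(x\cdot\nu)_+^\kp$ yields a \emph{lower} growth bound (nondegeneracy, Proposition~4.1 in \cite{FSW20}), not the \emph{upper} bound you need. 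Nothing in that inequality prevents $|\uu|$ from growing like a degree-$\kp$ harmonic polynomial along the blow-up sequence, so a ``pointwise comparison with the explicit scalar barrier'' as stated does not rule out the nontrivial harmonic limit. To handle the integer case one needs an input genuinely beyond Weiss --- in the related no-sign obstacle and cooperative-system literature this is typically an Alt--Caffarelli--Friedman type monotonicity, a BMO/$W^{2,p}$ compactness argument, or a quantitative flatness/decay improvement --- none of which appears in your sketch. As written the proof is incomplete precisely where you anticipated, and the suggested repair does not supply the missing upper bound.

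Two further minor points. First, the ``maximality/dichotomy selection'' of $r_j$ with $\sup_{B_s}|\uu|\le M_js^\kp$ for $s\in[r_j,s_0]$ is achievable (take $\rho_j$ near-maximizing $S(\rho)/\rho^\kp$ over $[r_j,s_0]$ and relabel), but you should state the construction since it is essential for the boundedness of $\uu_j$ on expanding balls. Second, the uniformity on compacts is asserted via upper semicontinuity of $W(\uu,\cdot,0+)$ plus covering, which is plausible but is not really ``iterating the same blow-up argument''; one needs the contradiction sequence to be allowed to move its base point $x_0$ within the compact set, and that variant should be spelled out.
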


As of now, the result for the regularity of  the free boundary  is as follows.

\begin{proposition}[Theorem 1.4 in \cite{FSW20}]\label{Regularity of free boundary}
The set of regular free boundary points $\cR_{\uu}$ is an open set in $\Gamma(\uu)$ and is locally  a $C^{1,\beta}$-manifold for some $\beta\in (0,1)$ depending only on $n$, $m$ and $q$.
\end{proposition}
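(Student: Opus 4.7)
The plan is to prove $C^{1,\beta}$-regularity of $\cR_\uu$ through a three-step scheme familiar from obstacle-type problems: (i) quantitative uniqueness of blow-up at regular points via an epiperimetric inequality, (ii) openness of $\cR_\uu$ via upper semicontinuity of the Weiss energy, and (iii) upgrading of the resulting flatness estimate to $C^{1,\beta}$ regularity.

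First I would record that on a half-space solution $\uu_0(x)=\alpha\max(x\cdot\nu,0)^\kp\ee\in\bH$, Proposition \ref{monotonicity formula} gives $W(\uu_0,0,r)\equiv W_\bH$, a universal constant independent of $\nu$ and $\ee$. Given $x_0\in \cR_\uu$, Proposition \ref{growth-estimate} bounds $\uu_{r_k}(x):=r_k^{-\kp}\uu(x_0+r_kx)$ uniformly, so along a subsequence $\uu_{r_k}\to\tilde\uu_0$ in $C^0_{\rm loc}$ with $\tilde\uu_0$ a $\kp$-homogeneous global solution whose energy equals $W(\uu,x_0,0+)=W_\bH$ by Proposition \ref{monotonicity formula}(ii). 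The core of the argument would be an epiperimetric inequality: there exist $\delta,\eta>0$ such that every $\kp$-homogeneous $v$ on $B_1$ with
\[
\min_{\uu_0\in\bH}\norm{v-\uu_0}_{W^{1,2}(\partial B_1)}\le\eta
\]
admits a competitor $\tilde v$ (with $\tilde v=v$ on $\partial B_1$) satisfying $E(\tilde v)-W_\bH\le (1-\delta)(E(v)-W_\bH)$, where $E(w)=\int_{B_1}(|\nabla w|^2+\tfrac{2}{1+q}|w|^{1+q})\,dx$. Feeding this into the monotonicity formula converts it into a differential inequality for $r\mapsto W(\uu,x_0,r)-W_\bH$ and forces polynomial decay, which in turn yields uniqueness of the blow-up at $x_0$ together with a quantitative rate $\norm{\uu_r-\uu_0}_{L^2(\partial B_1)}\lesssim r^{\beta_0}$.

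Openness of $\cR_\uu$ in $\Gamma(\uu)$ would follow from combining the upper semicontinuity of $W(\uu,\cdot,0+)$ stated in Proposition \ref{monotonicity formula}(iii) with the observation that $W_\bH$ is the lowest possible energy density achieved by nontrivial $\kp$-homogeneous solutions on a punctured neighborhood of a half-space axis; hence at nearby free boundary points $x$ one has $W(\uu,x,0+)\ge W_\bH$ by semicontinuity but also $\le W_\bH$ by continuity of blow-ups, so $x\in\cR_\uu$. Finally, the decay rate traps $\Gamma(\uu)\cap B_r(x_0)$ inside a $Cr^{1+\beta_0}$-neighborhood of the hyperplane $\{x\cdot\nu=0\}$; a directional-monotonicity argument comparing $\uu(x)$ with $\uu(x+te)$ for unit vectors $e$ close to $\nu$ (as in Alt-Caffarelli-type arguments) then shows that $\Gamma(\uu)$ is a Lipschitz graph near $x_0$, and applying the uniform decay rate at every nearby regular point yields a H\"older modulus of continuity for the outer normal, i.e.\ $C^{1,\beta}$ regularity.

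The main obstacle is the epiperimetric inequality in the vectorial setting. Two new features distinguish it from the scalar case: the target unit vector $\ee\in\bS^{m-1}$ produces additional tangential variations at a half-space critical point, so the manifold $\bH/{\sim}$ of minimizers has extra dimensions that must be modded out before any coercivity can hold; and the nonlinear term $|\uu|^{1+q}$ is only $C^{1,q}$ at $\uu=0$, so the second variation degenerates transversally to the free boundary. The resolution is a spectral analysis of the linearized operator on $\partial B_1$, decomposing perturbations into the tangent directions of $\bH$ and an orthogonal complement on which the Hessian of $E$ is strictly positive; the absence of $\kp$-homogeneous eigenfunctions off the tangent space, together with Proposition \ref{growth-estimate}, closes the inequality with a positive spectral gap.
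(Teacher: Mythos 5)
This proposition is a direct citation: the paper states it as ``Theorem 1.4 in \cite{FSW20}'' and gives no proof of its own, so there is no in-paper argument to compare against. What you propose does track the broad strategy used in the cited reference (and in \cite{asuw15} for $q=0$): a vectorial epiperimetric inequality driving a geometric decay of the Weiss energy, combined with upper semicontinuity and a gap theorem for openness, and then a flatness-implies-$C^{1,\beta}$ step. So as an outline it is pointed in the right direction. However, two things need repair.

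First, your openness argument has the logic of semicontinuity reversed. Proposition \ref{monotonicity formula}(iii) gives \emph{upper} semicontinuity, which for $x$ near $x_0\in\cR_\uu$ yields $W(\uu,x,0+)\le W(\uu,x_0,0+)+\varepsilon = \omega_q+\varepsilon$ — it supplies the \emph{upper} bound, not the lower one as you wrote. The lower bound $W(\uu,x,0+)\ge \omega_q$ is a separate statement: it comes from nondegeneracy together with a classification (or at least an isolation/gap) result asserting that $\omega_q$ is the smallest possible energy density of a nontrivial $\kp$-homogeneous blow-up, which in the vectorial setting is precisely Proposition~4.6 in \cite{FSW20} and is not a consequence of semicontinuity. ``Continuity of blow-ups'' does not give $\le\omega_q$; blow-ups are not known to be continuous a priori, and the inequality you want from that side is the one semicontinuity already provides. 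Once both bounds are in place you conclude $W(\uu,x,0+)=\omega_q$ and then invoke the classification again to say the blow-up at $x$ lies in $\bH$, i.e.\ $x\in\cR_\uu$.

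Second, the heart of the argument — the epiperimetric inequality itself — is only gestured at. Your final paragraph correctly identifies the two genuine difficulties (the extra tangent directions coming from $\ee\in\bS^{m-1}$ and the degenerate second variation due to $|\uu|^{1+q}$ being only $C^{1,q}$), but ``the absence of $\kp$-homogeneous eigenfunctions off the tangent space closes the inequality'' is an assertion, not an argument. Establishing the spectral gap for the linearization on $\partial B_1$, handling the nonquadratic term, and constructing the competitor $\tilde v$ is where essentially all of the work lies in \cite{FSW20}; without those details, what you have is a reasonable roadmap rather than a proof. Since the paper treats this proposition as a black-box input, that is fine as a summary of the prior literature, but it does not constitute an independent proof.

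Minor point: you state the blow-up subsequence converges in $C^0_{\rm loc}$; one needs at least $W^{1,2}_{\rm loc}$ (in fact $C^{1,\gamma}_{\rm loc}$ is available via Proposition \ref{growth-estimate} and interior estimates) to pass energies to the limit in the monotonicity formula.
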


In what follows,  we shall study  the behaviour of solutions near the regular parts of  the  free boundary. 
From now on, we assume without loss of generality that $0\in\cR_{\uu}$ and  that the blowup limit at $x=0$ is 
\begin{equation}\label{blowup-assumption}
\uu_r(x)=\frac{\uu(rx)}{r^\kp}\ra \alpha(x\cdot\ee_n)_+^\kp\ee_1.
\end{equation}
We also denote the  inner  unit  normal vector on the free boundary by $\nu$. 
According to $C^{1,\beta}$-regularity for the free boundary, Proposition \ref{Regularity of free boundary}, we know that $\cR_{\uu} \ni    x_0\mapsto \nu(x_0)$ is $C^{0,\beta}$ around the origin on the free boundary. 
Moreover, the blowup at a point $x_0\in\cR_\uu$ must be $\alpha(x\cdot \nu(x_0))_+^\kp\ee(x_0)$ for some unit vector $\ee(x_0)\in\bR^m$, in particular $\nu(0)=\ee_n$ and $\ee(0)=\ee_1$.

As a first step, we prove an auxiliary lemma that appears several times later in the arguments.
\begin{lemma}\label{auxiliary lemma}
Let $\uu$ be a solution of \eqref{elliptic-system}  in $B_2(x_0)$ with $0\leq q<1$ and $x_0\in \cR_\uu$ is a regular point of free boundary.
Consider an arbitrary sequence 
$$\{|\uu|>0\}\cap B_1\ni x_i\ra x_0$$ 
and let $d_i:=\dist(x_i,\Gamma)$.
Then  we find out 
\[
\uu_i(x):=\uu(x_i+d_ix)/d_i^\kp\ra\alpha\left((x+\nu(x_0))\cdot\nu(x_0)\right)_+^\kp\ee(x_0),
\]
in $C^{2,\gamma}(B_{1/2})$ for any $0<\gamma<1$.
\end{lemma}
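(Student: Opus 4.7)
The strategy is threefold: (i) prove uniform compactness for a rescaling $\tilde \uu_i$ centered at the nearest free boundary point, (ii) identify the subsequential limit as the half-space profile $\alpha(y\cdot\nu(x_0))_+^\kp\ee(x_0)$ via the monotonicity formula and uniqueness of the regular blow-up, and (iii) upgrade the convergence from $C^{1,\alpha}$ to $C^{2,\gamma}$ on the region where the limit stays strictly positive.

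For (i), let $y_i\in\Gamma$ realize $|x_i-y_i|=d_i$, set $n_i:=(x_i-y_i)/d_i\in\bS^{n-1}$, and define the centered rescaling $\tilde\uu_i(y):=d_i^{-\kp}\uu(y_i+d_iy)$, so that $\uu_i(x)=\tilde\uu_i(x+n_i)$. Since $y_i\to x_0\in\cR_\uu$ and $\cR_\uu$ is relatively open in $\Gamma$ by Proposition~\ref{Regularity of free boundary}, we have $y_i\in\cR_\uu$ for $i$ large, and the $C^{1,\beta}$ structure of $\cR_\uu$ implies $\nu(y_i)\to\nu(x_0)$, $\ee(y_i)\to\ee(x_0)$, and $n_i\to\nu(x_0)$ (since $x_i-y_i$ realizes the inward normal direction at $y_i$). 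Proposition~\ref{growth-estimate}, applied uniformly on the relatively compact set $\{y_i\}\subset\cR_\uu$, gives $\|\tilde\uu_i\|_{L^\infty(B_R)}\le C_R$ for each $R$, so $\Delta\tilde\uu_i$ is uniformly bounded on $B_R$. Interior $W^{2,p}$ estimates and Morrey embedding then yield uniform $C^{1,\alpha}_{loc}(\bR^n)$ bounds, and along a subsequence $\tilde\uu_i\to\tilde\uu_\infty$ in $C^{1,\alpha}_{loc}$.

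For (ii), by scaling $W(\tilde\uu_i,0,\rho)=W(\uu,y_i,d_i\rho)$. The monotonicity part of Proposition~\ref{monotonicity formula} together with $y_i\in\cR_\uu$ yields the lower bound $W(\uu,y_i,d_i\rho)\ge W(\uu,y_i,0+)=:W_\kp$, the common limit value of $W(\cdot,0,0+)$ at half-space solutions. For the matching upper bound, freeze $r>0$: continuity of $W(\uu,\cdot,r)$ at $x_0$ gives $\lim_i W(\uu,y_i,r)=W(\uu,x_0,r)$, and sending $r\to 0^+$ yields $\limsup_i W(\uu,y_i,d_i\rho)\le W(\uu,x_0,0+)=W_\kp$. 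Hence $W(\tilde\uu_\infty,0,\rho)\equiv W_\kp$, and Proposition~\ref{monotonicity formula}(ii) makes $\tilde\uu_\infty$ $\kp$-homogeneous. Combined with $\tilde\uu_\infty(0)=0$ and the uniqueness of the regular blow-up of \cite{asuw15, FSW20} (matched to the limiting normal via $\nu(y_i)\to\nu(x_0)$ and $\ee(y_i)\to\ee(x_0)$), this forces $\tilde\uu_\infty(y)=\alpha(y\cdot\nu(x_0))_+^\kp\ee(x_0)$. Since every subsequential limit coincides with this profile, the full sequence converges.

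For (iii), the ball $\overline{B_{1/2}(n_i)}$ lies at distance at least $n_i\cdot\nu(x_0)-\tfrac12\to\tfrac12$ from the hyperplane $\{y\cdot\nu(x_0)=0\}$, so $\tilde\uu_\infty$ is bounded below by a positive constant there; the $C^{1,\alpha}$ convergence transfers this lower bound to $\tilde\uu_i$ uniformly for large $i$. In this regime $\vv\mapsto|\vv|^{q-1}\vv$ is real-analytic, and Schauder estimates applied to $\Delta\tilde\uu_i=|\tilde\uu_i|^{q-1}\tilde\uu_i$ upgrade the $C^{1,\alpha}$ bounds to uniform $C^{2,\gamma}$ bounds on $B_{1/2}(n_i)$ for any $\gamma\in(0,1)$. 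Translating back by $-n_i$ yields the claimed $C^{2,\gamma}(B_{1/2})$ convergence of $\uu_i$. The main obstacle is step (ii): the base point $y_i$ and the scale $d_i$ collapse to $(x_0,0)$ simultaneously, so one cannot directly invoke the blow-up at any fixed regular point; the diagonal combination of monotonicity, its upper semicontinuity, and uniqueness of the regular blow-up is what makes the identification rigorous. Once this is in hand, the compactness and Schauder steps are standard.
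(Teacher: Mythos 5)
The overall architecture matches the paper's: rescale, get compactness, use the monotonicity formula to show the limit is $\kp$-homogeneous with half-space energy, then identify the limit. Your squeeze for the energy (lower bound from monotonicity of $r\mapsto W(\uu,y_i,r)$; upper bound from fixing $r$, using continuity of $W(\uu,\cdot,r)$ at $x_0$, then sending $r\to0^+$) is correct and is essentially equivalent to the Dini-theorem argument the paper uses; both exploit Proposition~\ref{monotonicity formula}(iii).

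There is, however, a genuine gap in the identification step of (ii). From constant energy $W_\kp$ and $\kp$-homogeneity you can conclude (via the classification in \cite{FSW20}) only that $\tilde\uu_\infty(y)=\alpha(y\cdot e)_+^\kp\ee$ for \emph{some} unit vectors $e\in\bR^n$, $\ee\in\bR^m$; the energy and homogeneity are invariant under rotations in both the domain and the target, so they cannot pin down $e$ or $\ee$. You then try to fix the orientation by appealing to ``uniqueness of the regular blow-up'' at $y_i$ and $\nu(y_i)\to\nu(x_0)$, $\ee(y_i)\to\ee(x_0)$, but this is precisely the point where the diagonal nature of the limit bites: the blow-up at $y_i$ is $\lim_{r\to0}\uu(y_i+r\,\cdot)/r^\kp$ for each \emph{fixed} $i$, while $\tilde\uu_\infty$ is the limit along $r=d_i\to0$, $i\to\infty$ simultaneously. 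You acknowledge this collapse as the ``main obstacle,'' yet the writeup does not close it --- pointwise uniqueness of the blow-up at each $y_i$ does not, on its own, transfer to the diagonal limit.

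The paper closes this with two additional ingredients you are missing. For $e$: the $C^{1,\beta}$-regularity of the free boundary near $y_i$ gives a cone $\{(x-y_i)\cdot\nu(y_i)<-\e|x-y_i|\}\cap B_\delta\subset\{\uu=0\}$ uniformly for $y_i$ near $x_0$; rescaling by $d_i$ and passing to the limit (then $\e\to0$) yields $\{(x+\ee_n)\cdot\ee_n<0\}\subset\{\uu_0=0\}$, forcing $e=\nu(x_0)$. For $\ee$: the paper evaluates $\uu_0(0)=\lim_i\uu(x_i)/d_i^\kp=\lim_i(\uu(x_i)/|x_i|^\kp)(|x_i|/d_i)^\kp$ and compares against the assumed blow-up \eqref{blowup-assumption} of $\uu$ at $x_0$ itself, forcing $\ee=\ee(x_0)$. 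Without these (or a substitute, e.g.\ a uniform-in-$y_i$ rate of blow-up convergence), the profile is not identified, and consequently step (iii) also breaks down: the claim that $\tilde\uu_\infty$ is bounded below on $\overline{B_{1/2}(n_i)}$ presupposes knowing which half-space $\tilde\uu_\infty$ is supported in.
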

\begin{proof}
For convenience we suppose  $x_0=0$ and that assumption \eqref{blowup-assumption} holds.
Choose $y_i\in \partial B_{d_i}(x_i)\cap \Gamma$. 
The regularity of  the free boundary   implies that 
\begin{equation}\label{auxl-lem-eq:4}
\nu(y_i)=(x_i-y_i)/d_i.
\end{equation} 
Since the map to the inner unit normal vector $ \nu$ is continuous  we have (note that $x_i \to 0$ also $ y_i \to 0$)
\begin{equation*}
(x_i-y_i)/d_i=\nu(y_i)\ra\nu(0)=\ee_n.
\end{equation*}
According to the growth estimate in Proposition \ref{growth-estimate}   at the point $y_i$, and  that $\cR_\uu$ is open in $\Gamma(\uu)$, we can   apply the growth estimate at $\Gamma^\kp(\uu)$ for large values of $i$,  to obtain the estimate 
\begin{equation*}
|\uu(x_i+d_ix)|\leq C|x_i+d_ix-y_i|^\kp \leq Cd_i^\kp|x+\nu(y_i)|^\kp.
\end{equation*}
Thus $\uu_i$ is a bounded sequence of solutions of $\Delta \uu_i=|\uu_i|^{q-1}\uu_i$ in $B_1\subset\{|\uu_i|>0\}$. 
The classical  regularity theory yields that $\{\uu_i\}$ is bounded in $C^{1,\gamma}(B_{1})$ and $C^{2,\gamma}(B_{1/2})$ for any $0<\gamma<1$.
Passing to a subsequence, we may assume that 
\[
\uu_i(x)\ra \uu_0(x),
\]
where the convergence is in $C^{1,\gamma}(B_1)$ and $C^{2,\gamma}(B_{1/2})$.

Now for $\rho$ positive and small enough,  $B_\rho(0)\cap\Gamma(\uu)$  consists only of regular points ($\cR_\uu$ is open in $\Gamma(\uu)$), and hence 
\begin{equation}\label{aux-lem-eq:1}
 \lim_{r\ra0^+}W(\uu,x,r)=\omega_q, \text{ for }x\in B_\rho(0)\cap\Gamma(\uu),
\end{equation}
 where $\omega_q$ is the energy of half-space solutions which is a universal constant depending only on $n$ and $q$. 
 We know that for every $x$, the function $r\mapsto W(\uu,x,r)$ is nondecreasing in $r$.
 Hence  by Dini's monotone convergence theorem, the convergence in \eqref{aux-lem-eq:1}   will be uniform in $ x$, for smaller $\rho$.     Thus 
 \[
 W(\uu_0,-\ee_n,r)=\lim_{i\ra\infty}W(\uu_i, -\nu(y_i),r)=\lim_{i\ra\infty}W(\uu, y_i, rd_i)=\omega_q,
 \]
for any $r>0$. 
Hence $\uu_0$ is a homogeneous function with respect to point $-\ee_n$.

On the other hand, $y_i$ is a regular point of free boundary for $\uu$.  
Therefore, for every $\varepsilon>0$, there exist  $\delta>0$ such that
$$\{(x-y_i)\cdot\nu(y_i)<-\varepsilon|x-y_i|\}\cap B_\delta(0)\subset\{\uu=0\},$$
when $y_i$ is close enough to $x_0=0$. 
Invoking the relation \eqref{auxl-lem-eq:4} and definition of $\uu_i$ to get 
\[
\{(x+\nu(y_i))\cdot\nu(y_i)<-\varepsilon|x+\nu(y_i)|\}\cap B_{\delta/d_i}(0)\subset\{\uu_i=0\},
\]
and passing to the limit,
\[
\{(x+\ee_n)\cdot \ee_n<-\varepsilon|x+\ee_n|\}\subset\{\uu_0=0\}.
\]
Since $\varepsilon$ was arbitrary, we conclude that 
\begin{equation}\label{aux-lem-eq:2}
\{(x+\ee_n)\cdot \ee_n<0\}\subset\{\uu_0=0\}.
\end{equation}
Now according to Proposition 4.6 in \cite{FSW20}, we infer that $\uu_0$ must be a half-space solution with respect to the point $-\ee_n$
and \eqref{aux-lem-eq:2} yields that 
\begin{equation*}
\uu_0(x)=\alpha ((x+\ee_n)\cdot\ee_n)_+^\kp\ee,
\end{equation*}
for some unit vector $\ee\in\bR^m$. 
(Recall that the nondegeneracy property, Proposition 4.1 in  \cite{FSW20},  necessitates that $\uu_0\not\equiv0$.) 
We must show that $\ee=\ee_1$. 
To do this note that
\begin{equation}\label{aux-lem-eq:3}
\alpha\ee=\uu_0(0)=\lim_{i\ra\infty}\uu_i(0)=\lim_{i\ra\infty}\uu(x_i)/d_i^\kp=\lim_{i\ra\infty}\frac{\uu(x_i)}{|x_i|^\kp}\cdot\frac{|x_i|^\kp}{d_i^\kp},
\end{equation}
and if $\frac{x_i}{|x_i|}\ra x_*$ in a subsequence, by assumption \eqref{blowup-assumption} we get
\[
\lim_{i\ra\infty}\frac{\uu(x_i)}{|x_i|^\kp}=\alpha (x_*\cdot\ee_n)_+^\kp\ee_1.
\]
This together with \eqref{aux-lem-eq:3} implies that $\ee=\ee_1$.
\end{proof}

Now,  in the following proposition we find out more information about the behavior of solution near regular free boundary points.

 \begin{proposition}\label{property-near-regular}
Let $\uu$ be a solution of \eqref{elliptic-system}  and suppose  \eqref{blowup-assumption} holds. 
Then the following properties hold  in a neighborhood of $x=0$:
\begin{enumerate}[(i)]
\item
$u^1\geq0$.

\item
$\frac{u^j}{|\uu|}\in C^0(\overline{\{|\uu|>0\}})$ and on the free boundary point $x_0\in\Gamma(\uu)$, we have 
\[\lim_{\{|\uu|>0\}\ni x\ra x_0}\frac{u^j(x)}{|\uu(x)|}=\ee(x_0)\cdot\ee_j.\]

\item
$\{u^1>0\}=\{|\uu|>0\}$.


\item
For all  $x_0\in\Gamma(\uu)$, we have 
\[
\lim_{\{|\uu|>0\}\ni x\ra x_0}|\uu(x)|^{-\frac{1+q}2}\partial_{i}\uu(x)=\kp\alpha^{1/\kp}(\nu(x_0)\cdot\ee_i)\ee(x_0).
\]
In particular, $\nabla (u^1)^{1/\kp}\in C^0(\overline{\{|\uu|>0\}})$.

\medskip
\item 
For all  $x_0\in\Gamma(\uu)$ and $1\leq i,j\leq n$, we have 
\[
\lim_{\{|\uu|>0\}\ni x\ra x_0}|\uu(x)|^{-q}\partial_{ij}\uu(x)=(\nu(x_0)\cdot\ee_i)(\nu(x_0)\cdot\ee_j)\ee(x_0).
\]

\end{enumerate}
\end{proposition}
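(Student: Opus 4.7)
The plan is to deduce all five items from the auxiliary Lemma~\ref{auxiliary lemma}, applied with appropriate choices of base point (either $0$ itself or a nearby regular point $x_0 \in \cR_\uu$), combined with the continuity of the inward normal $\nu(\cdot)$ from Proposition~\ref{Regularity of free boundary} and the continuity of the direction map $e(\cdot)$ on $\cR_\uu$, which follows from the uniqueness of blow-ups at regular points together with Lemma~\ref{auxiliary lemma}. The scaling arithmetic rests on the identities
\[ \kappa(1-q)/2 = 1, \qquad \kappa(1+q)/2 = \kappa-1, \qquad \kappa q = \kappa-2, \qquad \alpha^{1-q}\kappa(\kappa-1) = 1, \]
all immediate from $\kappa = 2/(1-q)$ and $\alpha = (\kappa(\kappa-1))^{-\kappa/2}$.

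For (i), I argue by contradiction: if $x_i \to 0$ lies in $\{|\uu|>0\}$ with $u^1(x_i) \le 0$, setting $d_i = \dist(x_i,\Gamma)$ and applying Lemma~\ref{auxiliary lemma} with $x_0 = 0$ gives $\uu(x_i)/d_i^\kappa \to \alpha e_1$, so $u^1(x_i)/d_i^\kappa \to \alpha > 0$, a contradiction. For (ii), continuity in $\{|\uu|>0\}$ is automatic; at $x_0 \in \Gamma$ near the origin (which lies in $\cR_\uu$ by openness), for any approach $x_i \to x_0$ from inside, the lemma yields $\uu(x_i)/d_i^\kappa \to \alpha e(x_0)$ and $|\uu(x_i)|/d_i^\kappa \to \alpha$, giving the quotient limit $e(x_0) \cdot e_j$; continuity up to $\Gamma$ then follows from continuity of $e(\cdot)$. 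For (iii), by (i) the function $u^1 \geq 0$ satisfies $\Delta u^1 - c u^1 = 0$ with $c = |\uu|^{q-1} \geq 0$, and $c$ is locally bounded in $\{|\uu|>0\}$. If $u^1(x_0) = 0$ at an interior point, the strong minimum principle forces $u^1 \equiv 0$ in a neighborhood; hence $\{u^1 = 0\} \cap \{|\uu|>0\}$ is clopen in the connected component through $x_0$. But by Proposition~\ref{Regularity of free boundary} this component is locally a topological half-ball, on which Lemma~\ref{auxiliary lemma} gives $u^1 > 0$ in a tube around $\Gamma$, a contradiction.

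For (iv) and (v), the central observation is scale invariance: under $\uu \mapsto \uu_i(\cdot) := \uu(x_i + d_i\,\cdot)/d_i^\kappa$, both $|\uu|^{-(1+q)/2} \partial_j \uu$ and $|\uu|^{-q}\partial_{jk}\uu$ pick up the exponents $-\kappa(1+q)/2 + (\kappa-1) = 0$ and $-\kappa q + (\kappa-2) = 0$ respectively. Thus these quantities agree with the same expressions for $\uu_i$ at the rescaled argument, and the $C^{2,\gamma}(B_{1/2})$ convergence of Lemma~\ref{auxiliary lemma} lets me pass to the limit at $x=0$. Computing on the half-space solution $\alpha((x+\nu(x_0))\cdot \nu(x_0))_+^\kappa e(x_0)$ gives $\kappa\alpha^{1/\kappa}(\nu(x_0)\cdot e_i)\,e(x_0)$ for (iv), while for (v) the factor $\alpha^{1-q}\kappa(\kappa-1) = 1$ collapses the limit to $(\nu\cdot e_i)(\nu\cdot e_j)\,e(x_0)$. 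For the ``in particular'' claim in (iv), I use the factorization
\[ \partial_j (u^1)^{1/\kappa} = \tfrac{1}{\kappa}\,(u^1/|\uu|)^{-(1+q)/2}\cdot |\uu|^{-(1+q)/2}\,\partial_j u^1, \]
where the first factor has a positive continuous limit $(e(x_0)\cdot e_1)^{-(1+q)/2}$ by (ii) (using that $e(x_0)$ stays near $e_1$) and the second factor has a continuous limit by (iv). The main obstacle is (iii): while (i)--(ii) and (iv)--(v) are rather mechanical consequences of Lemma~\ref{auxiliary lemma} once the scaling exponents are verified, the strict positivity claim requires a separate qualitative argument combining the strong minimum principle (with its local applicability despite the coefficient $|\uu|^{q-1}$ being singular at $\Gamma$) with a connectedness argument powered by $C^{1,\beta}$ regularity of the free boundary.
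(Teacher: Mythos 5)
Your parts (i), (ii), (iv), (v) follow the paper's approach almost verbatim: everything is a consequence of the $C^{2,\gamma}$ convergence of the rescaled solutions $\uu_i(\cdot)=\uu(x_i+d_i\,\cdot)/d_i^\kappa$ from Lemma~\ref{auxiliary lemma}, the scaling exponent arithmetic you record is correct, and the factorization $\nabla(u^1)^{1/\kappa}=\frac1\kappa(u^1/|\uu|)^{-(1+q)/2}\,|\uu|^{-(1+q)/2}\nabla u^1$ you use for the ``in particular'' claim is exactly the paper's.

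The one place where you genuinely diverge is (iii), and there you overcomplicate it. You describe (iii) as ``the main obstacle,'' requiring a strong minimum principle for $\Delta u^1=|\uu|^{q-1}u^1$ and a connectedness argument. But with (ii) already in hand the statement is immediate, and this is precisely what the paper does: by (ii), $u^1/|\uu|$ extends to a continuous function on $\overline{\{|\uu|>0\}}$ whose boundary values are $\ee(x_0)\cdot\ee_1$, and since $\ee(\cdot)$ is continuous with $\ee(0)=\ee_1$, this extension equals $1$ at the origin and hence stays strictly positive in a whole neighborhood of $0$ in $\overline{\{|\uu|>0\}}$. That forces $u^1>0$ wherever $|\uu|>0$ near the origin, and the reverse inclusion $\{u^1>0\}\subset\{|\uu|>0\}$ is trivial. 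Your minimum-principle route does work in the end, but the ``tube of positivity around $\Gamma$'' and the topological-half-ball step are both manifestations of the very continuity fact you already proved in (ii) --- so you are re-deriving (ii) in disguise. The paper's observation buys you a one-line proof; no PDE machinery is needed once (ii) is established.
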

\begin{proof}
The proof is based on the application of Lemma \ref{auxiliary lemma}. In all parts, we consider a neighborhood in which the free boundary consists only of regular points.

{\it (i)} Assume that  there exists a  sequence $x_i\ra0$ such that $u^1(x_i)<0$. 
For  $\uu_i(x):=\uu(x_i+d_ix)/d_i^\kp$  we obtain, as in Lemma \ref{auxiliary lemma}, 
\[
\uu_i(x)\ra\uu_0(x)=\alpha((x+\ee_n)\cdot\ee_n)_+^\kp\ee_1.
\]
Thus
\[
\alpha=\uu_0(0)\cdot\ee_1=\lim_{i\ra\infty}\uu_i(0)\cdot\ee_1=\lim_{i\ra\infty}\uu(x_i)\cdot\ee_1/d_i^\kp\leq 0,
\]
which is a contradiction.


{\it (ii)}
It is obvious that $\frac{u^j}{|\uu|}$ is smooth  inside the  set  $\{|\uu|>0\}$. We show that it can extend continuously on the free boundary. 
If   $x_i$ is a sequence converging to $x_0$, a point on the  free boundary.  Define the corresponding sequence $\uu_i$ and apply Lemma  \ref{auxiliary lemma}, then
\[
\lim_{x_i\ra x_0}\frac{u^j(x_i)}{|\uu(x_i)|}=\lim_{i\ra\infty}\frac{u^j_i(0)}{|\uu_i(0)|}=\frac{u^j_0(0)}{|\uu_0(0)|}=\ee(x_0)\cdot\ee_j.
\]

{\it (iii)}
This statement follows from the fact that $\frac{u^1}{|\uu|}$ is continuous and its value at $x=0$ is equals one.


{\it(iv)} For a sequence $x_l\ra x_0\in \cR_\uu$, consider the corresponding function $\uu_l(x):=\uu(x_l+d_lx)/d_l^\kp$ as defined in Lemma \ref{auxiliary lemma} converging to $\alpha((x+\nu(x_0))\cdot\nu(x_0))_+^\kp\ee(x_0)$. 
Thus 
we can write (notice that $|\uu_l(0)|\ra \alpha$ and we can assume that $|\uu_l(0)|\ne0$ for sufficiently large $l$)
\begin{align*}
\lim_{x_l\ra x_0}|\uu(x_l)|^{-\frac{1+q}2}\partial_{i}\uu(x_l)=&\lim_{l\ra \infty}|\uu_l(0)|^{-\frac{1+q}2}\partial_{i}\uu_l(0)
=\alpha^{-\frac{1+q}2}\kp\alpha (\nu(x_0)\cdot\ee_i)\ee(x_0).
\end{align*}
In addition,
\[\lim_{\{|\uu|>0\}\ni x\ra x_0}|\uu(x)|^{-\frac{1+q}2}\nabla u^1(x)=\kp\alpha^{1/\kp}\nu(x_0)(\ee(x_0)\cdot\ee_1),\]
which along with $(ii)$ implies that $\nabla (u^1)^{1/\kp}=\frac1\kp(u^1)^{-\frac{1+q}2}\nabla u^1$ is continuous.


{\it(v)}  Proof is similar to previous parts.
\end{proof}

\section{Partial Hodograph-Legendre transform}\label{Section:Hodograph}

In this section we provide the main tool of our approach, the change of coordinates involving dependent and independent variables. We begin with introducing the transform
\begin{equation}\label{Hodograph-definition}
T:x\mapsto y=(x_1,\cdots,x_{n-1},(u^1(x)/\alpha)^{1/\kp}).
\end{equation}
In order to show that the transform is well-defined, we need that $u^1\geq0$ in a neighborhood of $x=0$ which has been shown in  Proposition \ref{property-near-regular}.
The following proposition shows that $T$ is a local $C^{1}$-diffeomorphism.

\begin{proposition}\label{diffeomorphism}
Let    $0\in\cR_{\uu}$ and suppose    \eqref{blowup-assumption} holds. Then the transform $T$, defined in \eqref{Hodograph-definition}, is a $C^{1}$   diffeomorphism in a neighborhood of $x=0$ in $\overline{\{|\uu|>0\}}$.
\end{proposition}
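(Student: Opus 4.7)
The plan is to verify that $T$ is $C^1$ up to the boundary, compute the Jacobian at the origin to show it is non-degenerate, and then invoke (a boundary version of) the inverse function theorem.

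\textbf{Step 1: $C^1$-regularity of $T$ up to the free boundary.} Inside the open set $\{|\uu|>0\}$ the first component $u^1$ is strictly positive (by Proposition~\ref{property-near-regular}(i)-(iii), locally near $0$), so $v:=(u^1/\alpha)^{1/\kp}$ is smooth there, hence $T$ is smooth in the interior. For regularity up to $\Gamma$, I would use Proposition~\ref{property-near-regular}(iv), which states that $\nabla(u^1)^{1/\kp}$ extends continuously to $\overline{\{|\uu|>0\}}$. Thus $T\in C^1(\overline{\{|\uu|>0\}}\cap B_\rho)$ for some small $\rho>0$.

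\textbf{Step 2: The Jacobian at the origin.} Since $T(x)=(x_1,\dots,x_{n-1},v(x))$, the Jacobian has the block-triangular form
\[
DT(x)=\begin{pmatrix} I_{n-1} & 0 \\ \nabla' v(x) & \partial_n v(x)\end{pmatrix},\qquad \det DT(x)=\partial_n v(x).
\]
A direct computation gives $\partial_n v= (\kp\alpha^{1/\kp})^{-1}(u^1)^{-(1+q)/2}\partial_n u^1$ (using $1/\kp-1=-(1+q)/2$). Combining Proposition~\ref{property-near-regular}(ii), which yields $u^1/|\uu|\to 1$ at $0$, with Proposition~\ref{property-near-regular}(iv) applied at $x_0=0$ (where $\nu(0)=\ee_n$, $\ee(0)=\ee_1$ by \eqref{blowup-assumption}), I find
\[
\lim_{\{|\uu|>0\}\ni x\to 0}\partial_n v(x)=\frac{1}{\kp\alpha^{1/\kp}}\cdot\kp\alpha^{1/\kp}=1.
\]
Hence $\det DT(0)=1$, and by Step~1 the Jacobian is bounded away from zero on $\overline{\{|\uu|>0\}}\cap B_r$ for some $r>0$.

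\textbf{Step 3: Invertibility via an extension trick.} To apply the inverse function theorem in the classical open-set form I would extend $v$ across $\Gamma$ to a $C^1$ function on a full Euclidean neighborhood of $0$. Since $\Gamma$ is a $C^{1,\beta}$-manifold (Proposition~\ref{Regularity of free boundary}) with inner normal $\nu(0)=\ee_n$, and since $\partial_n v(0)=1>0$, locally $\Gamma=\{x_n=g(x')\}$ with $g\in C^{1,\beta}$ and $\{|\uu|>0\}=\{x_n>g(x')\}$. Define the odd reflection
\[
\tilde v(x',x_n)=\begin{cases} v(x',x_n), & x_n\ge g(x'),\\ -v(x',2g(x')-x_n), & x_n<g(x').\end{cases}
\]
Because $v=0$ on $\Gamma$ and $\nabla v$ is continuous up to $\Gamma$ with $\partial_n v(0)=1$, a routine check shows $\tilde v\in C^1$ in a neighborhood of $0$, with $\nabla \tilde v(0)=\nabla v(0)$. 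The extended map $\tilde T(x)=(x',\tilde v(x))$ then satisfies $\det D\tilde T(0)=1\ne 0$, so the classical inverse function theorem produces a $C^1$ inverse of $\tilde T$ on a neighborhood of the origin. Restricting to $\overline{\{|\uu|>0\}}\cap B_\rho$ recovers the desired local $C^1$-diffeomorphism, since $\tilde T$ maps $\{x_n\ge g(x')\}$ into $\{y_n\ge 0\}$ (with $\Gamma\mapsto\{y_n=0\}$).

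\textbf{Main obstacle.} The only non-routine point is the boundary-to-boundary inversion: one must be careful that the extension is genuinely $C^1$ and that the inverse indeed sends $\{y_n\ge 0\}$ back into $\overline{\{|\uu|>0\}}$. The latter follows from $v>0$ on $\{|\uu|>0\}$ (Proposition~\ref{property-near-regular}(iii)) and the sign convention in the odd reflection. No higher regularity of $\Gamma$ is required at this stage; the improved regularity of $\Gamma$ will be derived later from the transformed equation.
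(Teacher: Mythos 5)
Your proposal is correct and follows essentially the same route as the paper: establish that $T\in C^1(\overline{\{|\uu|>0\}})$ near $0$ and that $DT(0)$ is invertible via Proposition~\ref{property-near-regular}(iv), then invoke the inverse function theorem. The paper states this tersely (computing $DT(0)=I$ and appealing directly to the inverse function theorem), whereas you additionally make explicit, via the odd-reflection extension across the $C^{1,\beta}$ graph $\Gamma$, how the classical open-set inverse function theorem is legitimately applied on a one-sided neighborhood — a helpful clarification of a point the paper leaves implicit, but not a genuinely different argument.
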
 
\begin{proof}
It easily follows from Proposition \ref{property-near-regular} part $(iv)$ that $\nabla(u^1/\alpha)^{1/\kp}$ is continuous in $\overline{\{|\uu|>0\}}$ and $\nabla(u^1/\alpha)^{1/\kp}(0)=\ee_n$. Therefore, $DT(0)=I$ is identity and invertible. 
The proof can now be  completed by invoking the inverse function theorem.
\end{proof}

We consider the inverse transform $x=T^{-1}(y)$ and define the partial Legendre transform of $\uu$ by
\[
v^1(y):=x_n-y_n,\quad v^j(y):=\frac{\alpha u^j(x)}{u^1(x)}\text{ for }2\leq j\leq m.
\] 
Since $T^{-1}$ is $C^1$, the Legendre function $\vv=(v^1,\dots, v^m)$ is $C^1$. 

Also, from relation $(u^1(x)/\alpha)^\kp=y_n$, we get
\[ u^1(T^{-1}(y))= u^1(y_1,y_2, \dots , y_{n-1}, y_n+v^1(y)) = \alpha y_n^\kp. \] 
As a consequence, the free boundary $\Gamma_{\uu}$ can be parametrized in a neighborhood of $x=0$ as
\[
\Gamma_{\uu}\cap B_\rho:=\{y\in B_\rho: y_n=v^1(y',0)\}.
\]
We are going  to show that 
\[ y'\mapsto v^1(y',0)  \]  
is analytic to prove the analyticity of free boundary. In addition, we will see that 
\[ y\mapsto \vv(y)  \] 
is also analytic  in $B_r^+(0)$ for some $r>0 $ to prove that the solution is analytic.

For the sake of simplicity let
\[
\tilde u^1(x):=(u^1(x)/\alpha)^{{1}/\kp}=y_n, \quad w^j(y):=y_n^{\kp}\,v^j(y)=u^j(x) \text{ for }2\leq j\leq m.
\]
Then 
\[    v^1(x',  \tilde u^1(x)) = x_n - \tilde u^1(x), \quad  w^j(x',\tilde u^1(x))=u^j(x),\] 
and the implicit function theorem implies
\begin{equation}\label{relation-derivatives-legendre}
\begin{aligned}
 \nabla_x \tilde u^1 = \frac1{1+\partial_{y_n} v^1}  \left( \begin{matrix} -\nabla'_y v^1
\\ 1 \end{matrix} \right), & \quad
\nabla_y w^j=\left( \begin{matrix} \nabla'_x u^j \\ 0\end{matrix} \right)+\frac{\partial_{x_n}u^j}{\partial_{x_n}\tilde u^1 }\left( \begin{matrix} -\nabla'_x \tilde u^1 \\ 1\end{matrix} \right),
\ \text{ for }j>1, 
\end{aligned}\end{equation}
where $\nabla'=(\partial_{1},\cdots,\partial_{{n-1}})$.

\begin{lemma}\label{lem-vanishing-v-derivative}
The Legendre function $\vv=(v^1,\cdots,v^m)$ is a $C^{1}$ function in $B_r^+(0)$ for some $r>0$ and satisfies 
\begin{enumerate}[(i)]
\item
$\nabla v^1(0)=0$. 

\item
For $1\leq j\leq n-1$ and $1\leq i\leq n$, we have
\[
\lim_{y\ra0}y_n\partial_{ij}v^1(y)=0.
\]

\item
$y_n\partial_{nn}v^1(y)$ is bounded near $y=0$.

\item
For $2\leq j\leq m$, we have $v^j(y)=o(1)$ as $y\ra0$. Also, $\nabla v^j(y)=o(y_n^{-1})$ as well as $D^2 v^j(y)=o(y_n^{-2})$.
\end{enumerate}
\end{lemma}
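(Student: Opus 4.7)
The plan is to read off all four conclusions from differentiating the two defining identities
\[
u^1(y',y_n+v^1(y)) = \alpha y_n^{\kp}, \qquad u^j(y',y_n+v^1(y)) = \alpha^{-1} y_n^{\kp}\,v^j(y) \quad (2\le j\le m),
\]
and substituting the boundary asymptotics from Proposition~\ref{property-near-regular} (recalling $\nu(0)=\ee_n$ and $\ee(0)=\ee_1$). Part (i) is immediate: Proposition~\ref{property-near-regular}(iv) gives $\nabla \tilde u^1(0) = \ee_n$, and \eqref{relation-derivatives-legendre} at $y=0$ then forces $\nabla' v^1(0) = 0$ and $1+\partial_n v^1(0) = 1$.

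For (ii) and (iii), differentiating the first identity once in $y_n$ gives
\[\partial_{x_n} u^1\cdot(1+\partial_n v^1) = \alpha\kp y_n^{\kp-1},\]
so $\partial_{x_n} u^1 = \alpha\kp y_n^{\kp-1}/(1+\partial_n v^1)$, and a second differentiation in $y_n$ yields
\[\partial_{nn} v^1 = \frac{(\kp-1)(1+\partial_n v^1)}{y_n} - \frac{(1+\partial_n v^1)^3\,\partial_{x_nx_n}u^1}{\alpha\kp y_n^{\kp-1}}.\]
Proposition~\ref{property-near-regular}(v) at $x_0=0$ gives $\partial_{x_nx_n}u^1 = |\uu|^q(1+o(1))$, and Proposition~\ref{property-near-regular}(ii) with $u^1\sim\alpha y_n^{\kp}$ gives $|\uu|^q = \alpha^q y_n^{\kp-2}(1+o(1))$. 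From $\alpha = (\kp(\kp-1))^{-\kp/2}$ one checks $\alpha^{q-1}=\kp(\kp-1)$, so the two $y_n^{-1}$ leading terms cancel and in fact $y_n\partial_{nn}v^1\to 0$, which gives (iii). The mixed derivatives $\partial_{jk}v^1$, $\partial_{jn}v^1$ with $j<n$ come from the analogous differentiation scheme in $y_j,y_k$ (or $y_j,y_n$), producing rational combinations of $\partial_{x_ax_b}u^1$, $\partial_c v^1$ and powers of $y_n$ divided by $\partial_{x_n}u^1\sim y_n^{\kp-1}$. By Proposition~\ref{property-near-regular}(v), $\partial_{x_ax_b}u^1 = |\uu|^q[(\nu\cdot\ee_a)(\nu\cdot\ee_b)+o(1)]$, and continuity of $\nu$ (Proposition~\ref{Regularity of free boundary}) gives $\nu_j=o(1)$ for $j<n$. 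Every resulting term therefore carries either a factor $\nu_j\to 0$ or a factor $\partial_j v^1\to 0$ which multiplies $y_n^{-1}$, so $\partial_{ij}v^1 = o(y_n^{-1})$ whenever $j<n$, proving (ii).

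For (iv), the identity $v^j=\alpha u^j/u^1$ together with Proposition~\ref{property-near-regular}(ii) at $x_0=0$ (where $\ee(0)\cdot\ee_j=\delta_{1j}$) immediately gives $v^j=\alpha(u^j/|\uu|)/(u^1/|\uu|)\to 0$. Differentiating the second defining identity in $y_i$ and isolating $\partial_i v^j$, the dominant term is a multiple of $\partial_{x_i}u^j/u^1$; Proposition~\ref{property-near-regular}(iv) reads $\partial_{x_i}u^j = |\uu|^{(\kp-1)/\kp}[\kp\alpha^{1/\kp}\nu_i\delta_{1j}+o(1)]$, which for $j\ge 2$ is $o(y_n^{\kp-1})$, and together with $u^1\sim y_n^{\kp}$ yields $\partial_i v^j=o(y_n^{-1})$. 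An entirely parallel step using Proposition~\ref{property-near-regular}(v), whose leading coefficient carries the same vanishing factor $\delta_{1j}$, produces $D^2 v^j=o(y_n^{-2})$.

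The main obstacle will be making the pointwise limits of Proposition~\ref{property-near-regular}(iv)--(v) effective along sequences $y\to 0$ in $\{y_n>0\}$ approaching from arbitrary directions (in particular with $|y'|/y_n$ unbounded), and tracking all the lower-order $o(1)$ errors simultaneously. A clean way to handle both is a blow-up: the rescaled Legendre function $V_r(y):=v^1(ry)/r$ corresponds via \eqref{Hodograph-definition} to the rescaled solution $\uu_r(x)=\uu(rx)/r^{\kp}$, which converges in $C^{2,\gamma}_{\mathrm{loc}}(\overline{\{x_n>0\}}\cap B_1)$ to the half-space solution by Lemma~\ref{auxiliary lemma}; hence $V_r\to 0$ in $C^{2,\gamma}_{\mathrm{loc}}(\{y_n>0\})$, and evaluating $V_r$ and its derivatives at a reference point reduces the required limits in (ii)--(iv) to this smooth convergence.
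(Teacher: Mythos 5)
Your first four paragraphs reproduce the paper's proof, the only difference being that you differentiate the identity $u^1(y',y_n+v^1(y))=\alpha y_n^\kp$ directly where the paper computes through $\tilde u^1 = (u^1/\alpha)^{1/\kp}$ and the implicit--function--theorem formulas; the routes are equivalent. The observation that the two $y_n^{-1}$ contributions in $\partial_{nn}v^1$ cancel, so that in fact $y_n\partial_{nn}v^1\to 0$ rather than being merely bounded, is correct and is what the paper's own formula yields once the Proposition~\ref{property-near-regular}(iv)--(v) limits are substituted; the cancellation rests on $\alpha^{q-1}=\kp(\kp-1)$, as you say. Two small slips: the second defining identity should be $u^j(y',y_n+v^1(y))=y_n^\kp v^j(y)$ (no extra $\alpha^{-1}$, since $v^j=\alpha u^j/u^1$ and $u^1=\alpha y_n^\kp$), and at $x_0=0$ the factors $\nu(x_0)\cdot\ee_j$ in Proposition~\ref{property-near-regular}(v) vanish \emph{exactly} for $j<n$ because $\nu(0)=\ee_n$; invoking ``continuity of $\nu$'' and $\nu_j=o(1)$ is neither needed nor meaningful, since the only $\nu$ appearing in the limits at $x_0=0$ is the fixed vector $\nu(0)$.

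The final paragraph raises a concern that is not there, and the proposed blow-up fix has a genuine gap. The statements in Proposition~\ref{property-near-regular}(iv)--(v) are full limits $\lim_{\{|\uu|>0\}\ni x\to 0}$ along arbitrary sequences in the positivity set; since $T$ is a $C^1$ diffeomorphism, $y\to 0$ with $y_n>0$ is the same as $x\to 0$ in $\{|\uu|>0\}$, and the expressions for $\partial_{ij}v^1$ and $D^2w^j$ are rational in quantities whose limits are known with denominator $\partial_{x_n}\tilde u^1\to 1\neq 0$, so the limits pass term by term with no directional restriction and no error-tracking issue. The proposed fix fails precisely where you locate the difficulty: Lemma~\ref{auxiliary lemma} gives $C^{2,\gamma}$ convergence of a \emph{differently normalized} rescaling, $\uu(x_i+d_i x)/d_i^\kp$ on $B_{1/2}$, which is an interior ball of the positivity set; it does not give $C^{2,\gamma}$ convergence of $\uu_r$ on $\overline{\{x_n>0\}}\cap B_1$ up to $\{x_n=0\}$. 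Hence $V_r\to 0$ in $C^{2,\gamma}_{\mathrm{loc}}(\{y_n>0\})$ (open set) only controls sequences $y\to 0$ with $y_n/|y|$ bounded below, which is exactly the tangential approach you set out to cover. Fortunately the fix is unnecessary: the direct argument of your earlier paragraphs, which coincides with the paper's proof, already covers every direction of approach.
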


\begin{proof}
{\it (i)} 
Since $T$ is a $C^1$ transform, so $\vv$ will be $C^1$ as well.  Let $x=0$ in \eqref{relation-derivatives-legendre} and invoke Proposition \ref{property-near-regular} part $(iv)$ to get $\nabla_x\tilde u^1(0)=\ee_n$, it implies that $\nabla v^1(0)=0$. 

{\it (ii)} If $1\leq i,j\leq n-1$, by implicit function theorem we have 
\[
\partial_{y_iy_j}v^1=-\frac{\partial_{x_ix_j}\tilde u^1}{\partial_{x_n}\tilde u^1}+\frac{\partial_{x_ix_n}\tilde u^1\partial_{x_j}\tilde u^1+\partial_{x_jx_n}\tilde u^1\partial_{x_i}\tilde u^1}{(\partial_{x_n}\tilde u^1)^2}-\frac{\partial_{x_nx_n}\tilde u^1\partial_{x_i}\tilde u^1\partial_{x_j}\tilde u^1}{(\partial_{x_n}\tilde u^1)^3} .
\]
On the other hand, 
\[
\tilde u^1\partial_{k\ell}\tilde u^1=\frac1\kp(\frac1\kp-1)(u^1)^{-1-q}\partial_ku^1\partial_\ell u^1+\frac1\kp(u^1)^{-q}\partial_{k\ell}u^1,
\]
vanishes at $x=0$ by Proposition \ref{property-near-regular}, if $(k,\ell)\neq(n,n)$. Also, it is bounded near $x=0$ when  $k=\ell=n$.
It yields that $y_n\partial_{y_iy_j}v^1\ra0$ when $y\ra0$.

If $1\leq j\leq n-1$, we have 
\[
\partial_{y_jy_n}v^1=\frac{\partial_{x_nx_n}\tilde u^1\partial_{x_j}\tilde u^1-\partial_{x_jx_n}\tilde u^1\partial_{x_n}\tilde u^1}{(\partial_{x_n}\tilde u^1)^3},
\]
and similar argument shows that $y_n\partial_{y_jy_n}v^1$ vanishes at $y=0$.

{\it (iii)} The statement can be concluded easily by Proposition \ref{property-near-regular} part $(v)$ and the following relation:
\[
\partial_{y_ny_n}v^1=-\frac{\partial_{x_nx_n}\tilde u^1}{(\partial_{x_n}\tilde u^1)^3}.
\]

{\it (iv)} 
It is more convenient to work with $w^j$.
Part $(ii)$ in Proposition \ref{property-near-regular} shows that 
$$w^j(y)=u^j(x)=o(|\uu(x)|)=o(u^1(x))=o(y_n^\kp), \, \text{ as }y\ra0.$$
Furthermore, from part $(iv)$ in Proposition \ref{property-near-regular}, we know that 
\[
\nabla u^j(x)=o(|\uu(x)|^{\frac{1+q}2})=o((u^1(x))^{\frac{1+q}2})=o(y_n^{\kp-1}), \, \text{ as }x\ra0.
\]
This together with \eqref{relation-derivatives-legendre} proves that $\nabla w^j(y)=o(y_n^{\kp-1})$. 
This shows the desired property for the behavior of $\nabla v^j$ near to $y=0$.

To obtain the growth of second derivative of $w^j$, first consider   $1\leq k,\ell\leq n-1$ and 
\[
\partial_{y_ky_\ell}w^j=\partial_{x_kx_\ell}u^j+\partial_{x_kx_n}u^j\partial_{y_\ell} v^1+\partial_{x_\ell x_n}u^j\partial_{y_k}v^1+\partial_{x_nx_n} u^j \partial_{y_k} v^1\partial_{y_\ell} v^1+\partial_{x_n} u^j\partial_{y_k y_\ell}v^1. 
\]
Now apply the result in $(ii)$, as well as parts $(ii)$,  $(iv)$ and $(v)$ in Proposition \ref{property-near-regular},  we get
\[
\partial_{y_ky_\ell}w^j(y)=o(|\uu(x)|^q)+o(y_n^{-1}|\uu(x)|^{\frac{1+q}2})=o(y_n^{\kp-2}).
\]

By a similar argument the statement holds for partial derivatives
\[
\partial_{y_ny_\ell}w^j=\partial_{x_nx_\ell}u^j\partial_{y_n}v^1+\partial_{x_nx_n}u^j\partial_{y_\ell} v^1\partial_{y_n}v^1+\partial_{x_n} u^j\partial_{y_n y_\ell}v^1,\quad\text{ for }1\leq \ell\leq n-1,
\]
and also
\[
\partial_{y_ny_n}w^j=\partial_{x_nx_n}u^j(\partial_{y_n}v^1)^2+\partial_{x_n}u^j\partial_{y_ny_n}v^1.
\]
Therefore, $D^2w^j(y)=o(y_n^{\kp-2})$ and according to previous results for $v^j$ and $\nabla v^j$ we obtain that $D^2v^j(y)=o(y_n^{-2})$.
\end{proof}


In the sequel we will obtain a nonlinear PDE system for the Legendre vectorial  function $\vv$. 
\begin{align*}
\partial_{x_i}^2\tilde u^1=&-\partial_{y_i}\left(\frac{\partial_{y_i}v^1}{1+\partial_{y_n}v^1}\right)+\frac{\partial_{y_i}v^1}{1+\partial_{y_n}v^1}\partial_{y_n}\left(\frac{\partial_{y_i}v^1}{1+\partial_{y_n}v^1}\right)\\[8pt]
=&-\frac{\partial_{y_iy_i}v^1}{1+\partial_{y_n}v^1}+\frac{\partial_{y_n}(\partial_{y_i}v^1)^2}{(1+\partial_{y_n}v^1)^2}-\frac{(\partial_{y_i}v^1)^2\partial_{y_ny_n}v^1}{(1+\partial_{y_n}v^1)^3},
\end{align*}
for $i=1,\cdots,n-1$ and 
\begin{align*}
\partial_{x_n}^2\tilde u^1=&\frac{1}{1+\partial_{y_n}v^1}\partial_{y_n}\left(\frac{1}{1+\partial_{y_n}v^1}\right).
\end{align*}
Thus
\[ \Delta_x \tilde u^1 = - \frac1{1+\partial_{y_n} v^1} \Big( \Delta v^1 - \partial_{y_n} \frac{ |\nabla v^1|^2}{1+\partial_{y_n} v^1 }\Big).   \] 
On the other hand, 
\[
\begin{split} 
\Delta_x u^1 \, & = \alpha \Delta_x (\tilde u^1)^{\kp} 
\\ & =  \alpha  \kp  (\tilde u^1)^{\kp-1} \Delta_x \tilde u^1
+ \alpha \kp(\kp-1) (\tilde u^1)^{\kp-2} |\nabla \tilde u^1|^2
\\&=- \frac{\alpha  \kp y_n^{\kp-1}}{1+\partial_{y_n} v^1} \Big( \Delta v^1 - \partial_{y_n} \frac{|\nabla v^1|^2}{1+\partial_{y_n} v^1 }\Big)
+\alpha \kp(\kp-1) y_n^{\kp-2}\frac{1+ |\nabla' v^1|^2}{(1+\partial_{y_n} v^1)^2 }
\end{split} 
\] 
taking into account that $\Delta_x u^1=|\uu|^{q-1}u^1=\alpha^{q} y_n^{\kp-2}|\vv'|_*^{q-1}$,
where 
\begin{equation*}
|\vv'|_*=(1+ |\vv'/\alpha|^2)^{1/2}=\left(1+(v^2/\alpha)^2+\cdots+(v^m/\alpha)^2\right)^{1/2}.
\end{equation*}
Finally, we get 
\begin{equation}\label{equ:v1}
\begin{split}
y_n & \left(\Delta v^1 - \partial_{y_n} \frac{|\nabla v^1|^2}{1+\partial_{y_n} v^1 }\right)+ 2(\kp-1) \partial_{y_n} v^1 \\
& -(\kp-1) \frac{|\nabla v^1|^2}{1+\partial_{y_n} v^1}
- (\kp-1)(1-  |\vv'|_*^{q-1}) (1+ \partial_{y_n} v^1)=0.  
\end{split}
\end{equation}
Define the degenerate operator 
\[
\cL:=y_n\left(\Delta + \sum_{\ell=1}^n a_\ell \partial_{\ell n}\right),
\]
where 
\begin{equation}\label{coefficient-eq}
 a_\ell = \left\{  \begin{array}{cl}  -\frac{2\partial_\ell v^1}{1+\partial_n v^1} & \text{ if } \ell <n \\ [8pt]
-\frac{2\partial_n v^1}{1+\partial_n v^1} + \frac{|\nabla v^1|^2}{(1+\partial_n v^1)^2} & \text{ if } \ell=n 
\end{array} \right.     
\end{equation}
Then we can rewrite \eqref{equ:v1}  as an degenerate elliptic equation
\[
\cL v^1 + 2(\kp-1)\partial_n v^1 =b,
\]
%
\begin{equation} \label{eq:eqb} 
b =  (\kp-1) \frac{  |\nabla v^1|^2
+  (1-|\vv'|_*^{q-1})  (1 + \partial_{y_n} v^1)^2} {1+\partial_{y_n} v^1}.
\end{equation}
Observe that the $a_\ell$ vanishes if $v^1=0$ and 
\[ b = O(|\nabla v^1|^2 +  |\vv'|^2).     \]
Here, $|\vv'|$ is the standard Euclidean norm of vector $\vv'$.


Also for $j\geq 2$, the following equation will be obtained
\begin{align*}
\partial_{x_i}^2 u^j = & \partial_{y_i}\left(\partial_{y_i} w^j - \frac{\partial_{y_n} w^j \partial_{y_i} v^1}{1+\partial_{y_n} v^1}\right)
- \partial_{y_n}\left(\partial_{y_i} w^j - \frac{\partial_{y_n} w^j \partial_{y_i} v^1}{1+\partial_{y_n} v^1}\right)\times\frac{ \partial_{y_i} v^1}{1+\partial_{y_n} v^1}\\
\partial_{x_n}^2 u^j = &\partial_{y_n}\left(\frac{ \partial_{y_n} w^j}{1+\partial_{y_n} v^1} \right)\times \frac{ 1}{1+\partial_{y_n} v^1} 
\end{align*}
and so
\begin{align*}
\Delta_x u^j=&\Delta' w^j+(\partial_{y_n}^2w^j)\frac{1+ |\nabla' v^1|^2}{(1+\partial_{y_n} v^1)^2 }-\frac{ 2\nabla' v^1\cdot\nabla'\partial_{y_n}w^j}{1+\partial_{y_n} v^1} \\
&- \frac{\partial_{y_n}w^j}{1+\partial_{y_n} v^1} \Big( \Delta v^1 - \partial_{y_n} \frac{ |\nabla v^1|^2}{1+\partial_{y_n} v^1 }\Big),\\ 
& = y_n^{-1}\Big( \mathcal{L} w^j -\Big(b - 2( \kappa-1) \partial_{y_n} v^1  \Big)     
\frac{\partial_{y_n} w^j}{1+ \partial_{y_n} v^1}     \Big)
\end{align*}
where $\Delta'=\partial_{y_1}^2+\cdots+\partial_{y_{n-1}}^2$.
Recall $\Delta_x u^j= |\uu|^{q-1}u^j= \alpha^{q-1}y_n^{-2}|\vv'|_*^{q-1}w^j $,  after simplifying 
\begin{align}\label{eq:wj}
\cL w^j-\Big( b - 2( \kappa-1) \partial_{y_n} v^1  \Big)     
\frac{\partial_{y_n} w^j}{1+ \partial_{y_n} v^1}     \Big)=  \kappa (\kappa-1) y_n^{-1} w^j|\vv'|_*^{q-1}.
\end{align}
Put $w^j=y_n^\kappa v^j$ in this equation, we will get
\[\begin{split}
\cL w^j = & y_n^{\kp}\cL v^j 
+ \kp y_n^{\kp} \sum_{\ell} a_{\ell } \partial_\ell v^j 
 + \kp y_n^{\kp}( a_{n}+2) \partial_n v^j 
 +\kp (\kp-1)(a_{n}+1) y_n^{\kp-1} v^j.
\end{split}\]
This together with \eqref{eq:wj} gives
\[\begin{split}
\cL v^j + \kp \sum_{\ell} a_{\ell } \partial_\ell v^j 
 + \kp( a_{n}+2) \partial_n v^j 
 -\Big( b - 2( \kappa-1) \partial_{n} v^1  \Big)    
 \frac{\partial_{n} v^j}{1+ \partial_{n} v^1} \Big) =0.
\end{split}\]

By notation 
\begin{equation}\label{eq:eqc} 
c= \kp a_n - \frac{b - 2( \kappa-1) \partial_{n} v^1 }{1+ \partial_{n} v^1},
\end{equation}
we come up with the following proposition:

\begin{proposition}
The Legendre function  $\vv=(v^1,\cdots,v^m)$ satisfies the following system
\begin{equation}\label{degenerate-eq}
\begin{split}
&\cL v^1 + 2(\kp-1)\partial_n v^1 =b,\\
& \cL v^j+ 2\kp \partial_n v^j +
\kp\sum_{\ell=1}^n a_{\ell }\partial_\ell v^j+c \partial_n  v^j=0,\  \text{ for }j>1,
\end{split}
\end{equation}
where the coefficients $a_{\ell}$ depend analytically on $ \nabla  v^1$ and vanish if $ \nabla v^1=0$, 
$b$ depends analytically  on $ \nabla v^1$  and $v^j$, $ j \ge 2$,  and is  $ O( |\nabla v^1 |^2+ |\vv' |^2)$. 
In addition, $c$ is an analytic function of $ \nabla v^1$ and $b$, and it vanishes if $ \nabla v^1=0$ and $b=0$.
They are given in \eqref{coefficient-eq}, \eqref{eq:eqb} and \eqref{eq:eqc}. 
\end{proposition}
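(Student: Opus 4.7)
The material preceding the statement essentially contains all the algebra; the plan is to organize it into three clean steps, checking the key identities and then reading off the structural properties.

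\emph{Step 1: the equation for $v^1$.} Starting from the implicit-function identity $\nabla_x \tilde u^1 = (1+\partial_n v^1)^{-1}(-\nabla' v^1, 1)^\top$ in \eqref{relation-derivatives-legendre}, I would apply the differentiation rules $\partial_{x_k} = \partial_{y_k} - \frac{\partial_{y_k} v^1}{1+\partial_n v^1}\partial_{y_n}$ for $k<n$ and $\partial_{x_n} = (1+\partial_n v^1)^{-1}\partial_{y_n}$ to compute $\Delta_x \tilde u^1$ and then $\Delta_x u^1$ via $u^1 = \alpha(\tilde u^1)^\kp$. Equating with $|\uu|^{q-1}u^1 = \alpha^q y_n^{\kp-2}|\vv'|_*^{q-1}$ (using $|\uu| = \alpha y_n^\kp|\vv'|_*$ and the identity $\kp(q-1)=-2$), dividing by $\alpha\kp y_n^{\kp-2}$, clearing $1+\partial_n v^1$, and expanding $-\partial_n(|\nabla v^1|^2/(1+\partial_n v^1))$ collects the pure second derivatives into $\cL v^1$ with coefficients $a_\ell$ as in \eqref{coefficient-eq}, produces the first-order term $2(\kp-1)\partial_n v^1$, and leaves exactly the remainder $b$ of \eqref{eq:eqb}.

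\emph{Step 2: the equation for $v^j$, $j \ge 2$.} Repeating the procedure on $u^j = w^j$ through the second line of \eqref{relation-derivatives-legendre} gives
\[
\Delta_x u^j = y_n^{-1}\left(\cL w^j - (b - 2(\kp-1)\partial_n v^1)\frac{\partial_n w^j}{1+\partial_n v^1}\right),
\]
with the same $\cL$ and $b$ as in Step 1. Substituting $w^j = y_n^\kp v^j$ and using $\alpha^{q-1}=\kp(\kp-1)$ (which follows from $\alpha = (\kp(\kp-1))^{-\kp/2}$ together with $\kp(q-1)=-2$), I would invoke the direct computation
\[
\cL(y_n^\kp v^j) = y_n^\kp\cL v^j + \kp y_n^\kp\sum_\ell a_\ell \partial_\ell v^j + \kp y_n^\kp(a_n+2)\partial_n v^j + \kp(\kp-1)(a_n+1)y_n^{\kp-1}v^j,
\]
and divide through by $y_n^\kp$. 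The undifferentiated $v^j$ contributions then collect into a single $y_n^{-1}v^j$ term whose coefficient vanishes identically: using $(a_n+1)(1+\partial_n v^1)^2 = 1+|\nabla' v^1|^2$ and the definition \eqref{eq:eqb} of $b$, one checks by a short algebraic manipulation that $\kp(\kp-1)(a_n+1-|\vv'|_*^{q-1}) = \kp(b-2(\kp-1)\partial_n v^1)/(1+\partial_n v^1)$. The surviving terms reorganize into the second line of \eqref{degenerate-eq} with $c$ as in \eqref{eq:eqc}.

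\emph{Step 3: structural properties of the coefficients.} These follow by inspection. Lemma \ref{lem-vanishing-v-derivative}(i) gives $\nabla v^1(0)=0$, so $1+\partial_n v^1$ stays near $1$ in a neighborhood of the origin; the rational formulas \eqref{coefficient-eq} and \eqref{eq:eqc} then show that $a_\ell$ and $c$ are analytic in $\nabla v^1$ (and in $b$ for $c$), vanishing when $\nabla v^1=0$ (respectively when $\nabla v^1=0$ and $b=0$). For $b$, the Taylor expansion $|\vv'|_*^{q-1} = (1+|\vv'/\alpha|^2)^{(q-1)/2} = 1+O(|\vv'|^2)$ gives analytic dependence on $\vv'$ and yields the bound $b = O(|\nabla v^1|^2 + |\vv'|^2)$. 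The only delicate point in the whole argument is the zeroth-order cancellation in Step 2, which is not a priori obvious and relies essentially on the precise exponents through $\kp(q-1)=-2$ and $\alpha^{q-1} = \kp(\kp-1)$; everything else is algebraic bookkeeping.
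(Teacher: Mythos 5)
Your plan is correct and follows essentially the same computational route as the paper: transform the Laplacian via the hodograph change of variables, equate with the right-hand side $|\uu|^{q-1}u^j$, substitute $w^j=y_n^\kp v^j$, and collect terms. Your explicit verification of the zeroth-order cancellation in Step 2 — showing $(a_n+1)(1+\partial_n v^1)^2 = 1+|\nabla' v^1|^2$ and then $\kp(\kp-1)(a_n+1-|\vv'|_*^{q-1}) = \kp(b-2(\kp-1)\partial_n v^1)/(1+\partial_n v^1)$, using $\alpha^{q-1}=\kp(\kp-1)$ and $\kp(q-1)=-2$ — is a detail the paper leaves implicit, and it is indeed the one non-mechanical identity in the derivation.
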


\begin{remark}
Using the scaling $\uu(rx)/r^\kp$ and the result of Lemma \ref{lem-vanishing-v-derivative}  in addition to the invariance of equations (\ref{degenerate-eq}) under the scaling
\[
\vv_r(y)=(\frac{v^1(ry)}r,v^2(ry),\dots,v^m(ry)),
\]
we may hence without loss of generality suppose that 
\begin{equation}\label{assumption-v-small}
\begin{split}
\norm{v^1}_{C^{1}(B_2^+)}+\norm{y_n D^2v^1}_{L^\infty(B_2^+)}\leq \e_0,\\
\norm{v^j}_{L^{\infty}(B_2^+)}+\norm{y_n\nabla v^j}_{L^{\infty}(B_2^+)}+\norm{y_n^{2}D^2v^j}_{L^{\infty}(B_2^+)}\leq \e_0,
\end{split}
\end{equation}
for any small value of $\e_0$, which is specifically determined in Proposition \ref{prop:v-W3p} and Section \ref{section:main-result}. 
This assumption implies that 
\[
1 \leq |\vv'|_* \leq 1 + (m-1)^{1/2}\e_0.
\]
In particular, $|\vv'|_*^{q-1}\approx 1$.
\end{remark}



\section{Regularity in a degenerate equation}\label{section:degenerate}
The key point in our approach is that  under assumption \eqref{assumption-v-small} 
the system \eqref{degenerate-eq} is a perturbation of operator $y_n\Delta  + \gamma \partial_n $ for some $\gamma>0$.
We will show in this setting the solution of perturbed equation will be analytic as well. 
In this section we first review the regularity of solutions for degenerate equations $y_n\Delta u + \gamma \partial_n u=f$.
Similar results can be found in some literature. We, however,  provide a short background  for readers' convenience.

We start with the definition of a quotient Banach space
\[
W^{k,p}_*(\Omega):=\{ u: \nabla u \in W^{k-2,p}(\Omega)\text{ and } x_nD^ku\in L^p(\Omega)\}/\bR,
\]
which is equipped with norm
\[
\|u\|_{W^{k,p}_*(\Omega)}:=\|\nabla u\|_{W^{k-2,p}(\Omega)}+\|x_nD^ku\|_{L^p(\Omega)}.
\]

Here is the main result for the regularity of the degenerate equation in the clean case. 

\begin{theorem}\label{reg-deg-lap}
For   $\gamma>0$, and for  every $f\in L^p(\bR^n_+)$  the equation
\begin{equation}\label{deg-lap}
\Delta_\gamma u:=y_n\Delta u+\gamma\partial_nu=f,\qquad\text{ in }\bR^n_+,
\end{equation}
has  a unique (up to a constant) solution  $ u\in W^{2,p}_*(\bR^n_+)$ and the following estimate holds
\[
\norm{ u}_{W^{2,p}_*(\bR^n_+)}\leq C_{n, p, \gamma}\norm{f}_{L^p(\bR^n_+)},
\] 
where $C_{n,p,\gamma}$ is a universal constant.
In addition,  there is constant $\tilde C_{n, p, \gamma}$ such that
if $f\in W^{1,p}(\bR^n_+)$, then $ u\in W^{3,p}_*(\bR^n_+)$ and
\[
\| u\|_{W^{3,p}_*(\bR^n_+)}\leq \tilde C_{n, p, \gamma}\| f\|_{W^{1,p}(\bR^n_+)}.
\]
\end{theorem}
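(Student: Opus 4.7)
My plan is to reduce the PDE to a one-dimensional ODE by taking the tangential Fourier transform, solve that ODE explicitly in terms of modified Bessel functions, and read $L^p$ bounds off the resulting Fourier multiplier; the $W^{3,p}_*$ estimate then follows by commuting the equation with derivatives and re-using the $W^{2,p}_*$ bound at a shifted parameter. For the existence and uniqueness statement it is convenient to start from the divergence-form identity
\[
y_n^{\gamma-1}\Delta_\gamma u \;=\; \operatorname{div}\bigl(y_n^{\gamma}\,\nabla u\bigr),
\]
which exhibits $\Delta_\gamma$ as symmetric on $L^2(\mathbb{R}^n_+;\,y_n^{\gamma-1}\,dy)$ with associated Dirichlet form $\int y_n^{\gamma}|\nabla u|^2\,dy$. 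A Lax--Milgram argument on the corresponding weighted Sobolev space gives a unique (modulo constants) solution in the case $p=2$, and in particular settles uniqueness in the general statement.

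\textbf{Fourier--Bessel representation and $L^p$ bounds.}
Taking the Fourier transform in $y'$ with dual variable $\xi$ and rescaling $t=|\xi|\,y_n$ converts \eqref{deg-lap} into
\[
t\,\partial_t^2 \hat u + \gamma\,\partial_t \hat u - t\,\hat u \;=\; |\xi|^{-1}\hat f,
\]
whose homogeneous part, after the substitution $\hat u=t^{(1-\gamma)/2}w(t)$, is the modified Bessel equation of order $\nu=(\gamma-1)/2$. Since $\gamma>0$ we have two distinguished solutions $I_\nu$ (bounded at the origin) and $K_\nu$ (decaying at infinity); variation of parameters produces an explicit Green's function $G(|\xi|;y_n,s)$ so that $\hat u(\xi,y_n)=\int_0^{\infty} G(|\xi|;y_n,s)\,\hat f(\xi,s)\,ds$. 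Differentiating once in any direction, or twice after multiplication by $y_n$, yields symbols that are smooth functions of the scale-invariant quantity $t=|\xi|y_n$; the standard small-$t$ and large-$t$ asymptotics of $I_\nu,K_\nu$ let me verify Mikhlin-type bounds $|\xi^\alpha\partial_\xi^\alpha m(\xi,y_n)|\le C$ uniformly in $y_n>0$. Applying Mikhlin's multiplier theorem slice-wise in $y_n$ and integrating in $y_n$ yields the desired $\|\nabla u\|_{L^p}+\|y_n D^2 u\|_{L^p}\le C\|f\|_{L^p}$; the component $y_n\partial_n^2 u$ may in fact also be recovered directly from the equation once the tangential pieces are under control.

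\textbf{The $W^{3,p}_*$ bound and main obstacle.}
Tangential derivatives commute with $\Delta_\gamma$, so for $i<n$ the function $\partial_i u$ solves \eqref{deg-lap} with datum $\partial_i f\in L^p$, and applying the $W^{2,p}_*$ estimate to $\partial_i u$ supplies every third derivative containing at least one tangential index; in particular all pure tangential second derivatives $\partial_{ij}u$ with $i,j<n$ lie in $L^p$, so $\Delta'u\in L^p$. For the remaining normal components, differentiating the equation once in $y_n$ and using it to eliminate $\Delta u$ yields the identity
\[
\Delta_{\gamma+1}(\partial_n u) \;=\; \partial_n f \,-\, \Delta' u,
\]
so applying the $W^{2,p}_*$ estimate at parameter $\gamma+1>0$ to $\partial_n u$ with right-hand side $\partial_n f-\Delta' u\in L^p$ supplies $\nabla\partial_n u\in L^p$ and $y_n D^2\partial_n u\in L^p$, which closes the $W^{3,p}_*$ bound. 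The hard part is the multiplier step above: one must verify uniform Mikhlin bounds on the Bessel-function symbols over the whole range $0<t<\infty$, smoothly patching the small- and large-argument expansions of $I_\nu$ and $K_\nu$, and checking that the potentially singular prefactors $t^{(1-\gamma)/2}$ entering the Green's function cancel in the derivative symbols so that the multiplier estimates are genuinely independent of $y_n$.
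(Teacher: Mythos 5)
Your plan has the right ingredients but one of the central reduction steps does not close. Let me separate what works from what does not.

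The $L^2$ existence/uniqueness via the divergence identity $y_n^{\gamma-1}\Delta_\gamma u=\operatorname{div}(y_n^\gamma\nabla u)$ and Lax--Milgram on $\dot H^1_\gamma$ is fine and is essentially what the paper does. Your $W^{3,p}_*$ step is also correct and matches the paper's: tangential derivatives commute with $\Delta_\gamma$, and for the normal direction one has $\Delta_{\gamma+1}(\partial_n u)=\partial_n f-\Delta' u$, after which the $W^{2,p}_*$ estimate at parameter $\gamma+1$ closes the argument.

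The gap is in the $L^p$ step, $p\neq 2$. After taking the tangential Fourier transform you have, schematically,
\[
\widehat{D u}(\xi,y_n)=\int_0^\infty m(\xi; y_n,s)\,\hat f(\xi,s)\,ds,
\]
so the symbol depends on two normal variables $(y_n,s)$, not just on $(\xi,y_n)$ as your notation $m(\xi,y_n)$ suggests. ``Applying Mikhlin slice-wise in $y_n$ and integrating in $y_n$'' does not produce an $L^p(\mathbb R^n_+)$ bound: even if, for each fixed pair $(y_n,s)$, the $\xi$-symbol satisfies uniform Mikhlin bounds, this only yields
\[
\|Du(\cdot,y_n)\|_{L^p_{y'}}\le C\int_0^\infty\|f(\cdot,s)\|_{L^p_{y'}}\,ds,
\]
which after Minkowski gives an $L^\infty_{y_n}L^p_{y'}\!\gets\! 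L^1_s L^p_{y'}$ estimate rather than $L^p(\mathbb R^n_+)\!\gets\! L^p(\mathbb R^n_+)$. To close the argument by Fourier methods one would need a genuinely two-variable statement: either an operator-valued Fourier multiplier theorem (requiring $R$-boundedness of the family of one-dimensional operators, not mere uniform boundedness, when $p\neq 2$), or pointwise decay of the kernel in $|y_n-s|$ sufficient to run a Calder\'on--Zygmund argument. The paper takes the second route: it proves Green's function bounds adapted to the intrinsic metric $d(x,y)\sim|x-y|/\sqrt{|x-y|+x_n+y_n}$ via rescaled interior/boundary elliptic estimates (its Lemma on local regularity), verifies that $(\mathbb R^n_+, d, x_n^{\gamma-1}dx)$ is a space of homogeneous type, applies CZ theory there to get $L^p(x_n^{\gamma-1})$ bounds, and then passes to the unweighted $L^p$ bounds demanded by $W^{k,p}_*$ using the Muckenhoupt weight $x_n^{1-\gamma}$. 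Your Bessel representation could in principle be used to derive the same kernel bounds (and thus is not a dead end), but as written the multiplier step is not a valid argument, and you also do not address the passage from the weighted to the unweighted $L^p$ norm that the definition of $W^{2,p}_*$ requires.
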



Similar results for parabolic version of the  degenerate operator $\Delta_\gamma-\partial_t$  has been proved earlier; see  Theorem 2 in \cite{kochnon}. 
We shall, however, provide a relatively short and self-contained proof for our case in Appendix.

We apply this theory of degenerate PDEs to improve   a  priori  regularity of Legendre functions $\vv=(v^1,\cdots,v^m)$, which is also a solution of \eqref{degenerate-eq}. 
From Lemma \ref{lem-vanishing-v-derivative}, we already know that $v^1\in W^{2,\infty}_*(B_1^+)$ as well as $y_n v^j\in W^{2,\infty}_*(B_1^+)$ for $j>1$.

\begin{proposition}\label{prop:v-W3p}
Suppose that $\vv$ is a solution of system \eqref{degenerate-eq} in $B_2^+$ and satisfies \eqref{assumption-v-small}.
If $\e_0$ in \eqref{assumption-v-small} is small enough, then $\vv\in W^{3,p}_*(B_{1}^+;\bR^m)$ and
\begin{equation}\label{v-W3p}
\norm{\vv}_{W^{3,p}_*(B_{1}^+)}\leq C\e_0,
\end{equation}
where constant $C$ depends only on  $m, n, p$ and $\kp$.
\end{proposition}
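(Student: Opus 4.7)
My plan is to treat \eqref{degenerate-eq} as a nonlinear perturbation of the clean degenerate operator $\Delta_\gamma$ (with $\gamma=2(\kp-1)$ for $v^1$ and $\gamma=2\kp$ for $v^j$, $j\geq 2$), and to apply the linear theory from Theorem \ref{reg-deg-lap} to bootstrap the a priori bound \eqref{assumption-v-small} up to the claimed $W^{3,p}_*$ regularity. I would fix $p>n$ so that the Sobolev embedding $W^{3,p}_*(\bR^n_+)\hookrightarrow C^1$ is available; this is essential for handling the product nonlinearities below.

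The concrete steps are: localize using a cutoff $\eta\in C_c^\infty(B_{3/2})$ with $\eta\equiv 1$ on $B_1$, and rewrite the equation for $\tilde v^j:=\eta v^j$ in the form $\Delta_\gamma\tilde v^j=\eta G^j+[\Delta_\gamma,\eta]v^j$, where \eqref{degenerate-eq} gives
\[G^1 = b - y_n\sum_\ell a_\ell\partial_{\ell n}v^1,\quad G^j = -y_n\sum_\ell a_\ell\partial_{\ell n}v^j - \kp\sum_\ell a_\ell\partial_\ell v^j - c\,\partial_n v^j\ \ (j\geq 2).\]
The commutator is supported in $B_{3/2}\setminus B_1$ and is controlled in $W^{1,p}$ by interior regularity for \eqref{elliptic-system} together with \eqref{assumption-v-small}. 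Theorem \ref{reg-deg-lap} then produces
\[\|\tilde v^j\|_{W^{3,p}_*(\bR^n_+)}\leq C\bigl(\|G^j\|_{W^{1,p}(B_{3/2}^+)} + \|v^j\|_{L^p(B_{3/2}^+)}\bigr).\]

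Next I estimate $G^j$ in $W^{1,p}$. The structural input from \eqref{coefficient-eq}, \eqref{eq:eqb} and \eqref{eq:eqc} is that $a_\ell,c$ are analytic in $(\nabla v^1,\vv')$ with $a_\ell,c=O(|\nabla v^1|+|\vv'|)$, while $b$ is analytic in $(\nabla v^1,\vv')$ and vanishes to second order at the origin. Under \eqref{assumption-v-small} this gives pointwise $a_\ell,c=O(\varepsilon_0)$ and $b=O(\varepsilon_0^2)$ in $L^\infty$. Each summand in $G^j$ then has the form $A\cdot B$ with $A=O(\varepsilon_0)$ in $L^\infty$ and $B\in\{\nabla\vv,y_n D^2\vv\}$, and the product rule produces contributions to $\nabla G^j$ of two types: quadratic terms of $L^\infty$ size $\varepsilon_0^2$, or products of a factor $O(\varepsilon_0)$ in $L^\infty$ (from \eqref{assumption-v-small}, or via $W^{3,p}_*\hookrightarrow L^\infty$ for $\nabla v^j$, $j>1$) with one of $D^2 v^1,\nabla v^j,D^2 v^j,y_n D^3\vv$, each controlled in $L^p$ by $\|\vv\|_{W^{3,p}_*}$. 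Summing gives
\[\|G^j\|_{W^{1,p}(B_{3/2}^+)}\leq C\varepsilon_0\bigl(\varepsilon_0 + \|\vv\|_{W^{3,p}_*(B_{3/2}^+)}\bigr).\]
Inserting this into the linear estimate and choosing $\varepsilon_0$ so small that $C\varepsilon_0<\tfrac12$ absorbs the $\|\vv\|_{W^{3,p}_*}$ term on the left-hand side and yields \eqref{v-W3p}.

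To make this rigorous---i.e.\ to actually place $\vv$ in $W^{3,p}_*$, not merely bound its norm assuming finiteness---I would run the argument on the vertical translates $\vv^{(\tau)}(y):=\vv(y+\tau\ee_n)$ for $\tau>0$. These are smooth up to $\{y_n=-\tau\}$, satisfy the analogous system on $B_{3/2}^+$, and inherit \eqref{assumption-v-small} uniformly in $\tau$. The argument above then delivers a $W^{3,p}_*$ bound for $\vv^{(\tau)}$ independent of $\tau$, and passing $\tau\to 0$ gives \eqref{v-W3p}. The main obstacle is the $W^{1,p}$ control of $G^j$: a term such as $\nabla v^1\cdot D^2 v^1$ appearing in $\nabla b$ is under \eqref{assumption-v-small} alone only $O(\varepsilon_0^2/y_n)$, which is \emph{not} in $L^p$ for $p\geq 1$; the estimate closes only by self-consistently treating $D^2\vv$ and $\nabla v^j$ as $L^p$ functions coming from $W^{3,p}_*$, using $p>n$ to recover $L^\infty$ control on the small multiplying factors.
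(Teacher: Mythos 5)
Your overall strategy---localize with a cutoff, view \eqref{degenerate-eq} as a perturbation of the clean degenerate operator $\Delta_\gamma$, invoke Theorem \ref{reg-deg-lap}, and absorb---is the right framework and is shared by the paper. However, the key estimate $\|G^j\|_{W^{1,p}}\leq C\e_0\bigl(\e_0+\|\vv\|_{W^{3,p}_*}\bigr)$ is not correct for $j\geq 2$, and this is a genuine gap.

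Concretely, $\nabla G^j$ contains the terms $(\nabla a_\ell)\,\partial_\ell v^j$ and $(\nabla c)\,\partial_n v^j$, which have the structure $D^2 v^1\cdot \nabla v^j$ (and $\nabla\vv'\cdot\nabla v^j$ coming from $\nabla c$) with \emph{no small prefactor}, since $a_\ell$ and $c$ vanish to first, not second, order. Under \eqref{assumption-v-small} alone, $\nabla v^j$ is merely $O(\e_0/y_n)$ and $D^2v^1$ is merely $O(\e_0/y_n)$, so neither belongs to any $L^p$. Once you pass to the a priori $W^{3,p}_*$ framework you gain $D^2v^1\in L^p$ and $\nabla v^j\in W^{1,p}\hookrightarrow L^\infty$, but the latter gives $\|\nabla v^j\|_{L^\infty}\leq C\|\vv\|_{W^{3,p}_*}$, \emph{not} $\leq C\e_0$---you are implicitly using the conclusion to justify the smallness of the multiplier. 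The honest bound is therefore
\[
\|G^j\|_{W^{1,p}}\leq C\e_0^2 + C\e_0\|\vv\|_{W^{3,p}_*} + C\|\vv\|_{W^{3,p}_*}^2 ,
\]
and the resulting inequality $\|\vv\|_{W^{3,p}_*}\leq C_1\e_0 + C_2\|\vv\|_{W^{3,p}_*}^2$ has both a small and a large root; smallness of $\e_0$ alone does not select the small one without a continuity argument. Your vertical-translation idea is in the right spirit but does not immediately supply this: the translated function solves the system with $y_n$ replaced by $y_n+\tau$ inside $\cL$, which is not $\Delta_\gamma$, so one would need uniform-in-$\tau$ estimates for a one-parameter family of (nonuniformly) degenerate operators before passing $\tau\to 0$.

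The paper breaks this circularity with a three-step bootstrap rather than a single shot. Step 1 proves $\|v^j\|_{W^{2,p}_*}\leq C\e_0$ for $j\geq 2$ first: the equation for $v^j$ alone has coefficients $(a_\ell,c)$ depending only on $\nabla v^1$ and $\vv'$, which are pointwise $O(\e_0)$ by \eqref{assumption-v-small}, so the perturbation $\cG^j$ has small norm $W^{2,p}_*\to L^p$ and no product of two uncontrolled quantities ever appears. Step 2 gets $W^{3,p}_*$ for $v^1$ by tangential differentiation; the new source $\sum_j\partial_{v^j}b\,\partial_k v^j$ is controlled in $L^p$ precisely because Step 1 already gives $\|\nabla\vv'\|_{L^p}\leq C\e_0$ (a bound not available from \eqref{assumption-v-small} alone). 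Step 3 then gets $W^{3,p}_*$ for $v^j$; the dangerous product $\partial_{ik}v^1\cdot\partial_\ell v^j$ is split by H\"older into $L^{2p}\times L^{2p}$, and both factors are bounded by $C\e_0$ by re-running Steps 1 and 2 with exponent $2p$ in place of $p$. This H\"older-with-$2p$ device, resting on the hierarchy $W^{2,p}_*$ for $\vv'$ $\Rightarrow$ $W^{3,p}_*$ for $v^1$ $\Rightarrow$ $W^{3,p}_*$ for $\vv'$, is the essential ingredient your one-pass argument is missing. A minor further omission: when differentiating in $\ee_n$, the paper has to move a $\partial_n\partial_n v^1$ term to the left, shifting $\gamma$ to $2\kp-1$; your localized-commutator formulation does not address this.
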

\begin{proof}
{\bf Step 1 -} $W^{2,p}_*$-\,estimate for $v^j$ $(j>1)$:\\

Choose a cutoff function $\zeta$ such that $\zeta\equiv 1$ on $B_{9/5}^+$ and $\eta\equiv0$ on $\bR^n_+\setminus B_{2}^+$, 
and set $ \bar v^j = \zeta  v^j$ which solves the problem
\begin{align}
y_n\Delta  \bar v^j+2\kp\partial_n  \bar v^j=&
-\sum_{\ell}a_{\ell }\left(\kp\partial_\ell\bar v^j+y_n\partial_{\ell n}\bar v^j\right)
- c\partial_{n}\bar v^j\notag\\
&+y_n\sum_{\ell}a_{\ell }\left(\partial_\ell\zeta \partial_n v^j + \partial_n \zeta \partial_\ell v^j\right)
+2y_n\nabla\zeta\cdot\nabla v^j
\notag\\
&+\left(\cL\zeta+2\kp\partial_n\zeta +\kp \sum_{\ell} a_{\ell } \partial_\ell\zeta  + c \partial_n\zeta\right) v^j
\label{prop:equ:barvj} .
\end{align}
Define the operator 
\[
\cG^j u:=\sum_{\ell}a_{\ell }\left(\kp\partial_\ell u+y_n\partial_{\ell n} u\right)
+ c\partial_{n}u.
\]
According to \eqref{assumption-v-small} and  the definition of  $a_\ell$ and $c$, the norm of  the operator $\cG^j: W^{2,p}_*(\bR^n_+)\ra L^p(\bR^n_+)$ satisfies 
\[
\norm{\cG^j}\leq C\e_0.
\]
Therefore, when $\e_0$ is small enough the operator $\Delta_{2\kp} + \cG^j$ is invertible, implying 
\[
\norm{\bar v^j}_{W^{2,p}_*(\bR^n_+)}\leq 2C_{n, p, 2\kp}\norm{(\Delta_{2\kp} + \cG^j)\bar v^j}_{L^{p}(\bR^n_+)},
\] 
where the constant $C_{n, p, 2\kp}$ is defined in Theorem \ref{reg-deg-lap}. Here  we have assumed that $\e_0$ is small enough such that $\norm{\Delta_{2\kp}^{-1}\cG^j}\leq 1/2$.
This together with \eqref{prop:equ:barvj} and \eqref{assumption-v-small}  gives
\begin{align*}
\norm{\bar v^j}_{W^{2,p}_*(B_{2}^+)}\leq C\e_0,
\end{align*}
for a universal constant $C$.
Finally, we have 
\[
\norm{ v^j}_{W^{2,p}_*(B_{9/5}^+)}\leq C\e_0.
\]

\medskip


{\bf Step 2 -} $W^{3,p}_*$-\,estimate for $ v^1$:\\

Differentiate equation \eqref{degenerate-eq} with respect to a tangential direction $\ee_k$, $1\leq k<n$
\begin{align}\label{eq:v_k^1}
y_n \Delta v_k^1 + 2(\kp-1)\partial_n v_k^1=& -\sum_{\ell} y_n a_\ell \partial_{\ell n} v_k^1 - \sum_{i,\ell=1}^n y_n \partial_{v^1_i}a_\ell \partial_{i} v_k^1 \partial_{\ell n}v^1 \\
&+\sum_{j=2}^m \partial_{v^j}b \partial_k v^j + \sum_{i=1}^n \partial_{v^1_i}b \partial_i v^1_k, \notag
\end{align}
where $v_k^1:=\partial_k  v^1$,  $\partial_{v^1_i}$ is derivative with respect to $\partial_i v^1$ when $a_\ell(\nabla v^1)$ or $b(\nabla v^1,\vv')$ are considered as  functions of $\nabla v^1$. Similarly,  $\partial_{v^j}$ denotes the derivative with respect to $v^j$ when  $b$ is considered as a function of $\vv'$.
Let us consider a cutoff function $\eta$ such that $\eta\equiv 1$ on $B_{8/5}^+$ and $\eta\equiv0$ on $\bR^n_+\setminus B_{9/5}^+$, 
and set $ \hat v^1 = \eta v_k^1$. We have
\begin{align*}
y_n \Delta\hat v^1 +&2(\kp-1)\partial_n\hat v^1  =\cG^1(\hat v^1) + I_1+I_2,
\end{align*}
where
\begin{align*}
\cG^1(\hat v^1):=&-\sum_{\ell} y_n a_\ell \partial_{\ell n} \hat v^1  - \sum_{i,\ell=1}^n (y_n \partial_{\ell n}v^1) \partial_{v^1_i}a_\ell \partial_{i} \hat v^1 
+  \sum_{i=1}^n \partial_{v^1_i}b \partial_i \hat v^1, \\
I_1:=& \sum_{\ell} y_n a_\ell  \left( \partial_{\ell n} \eta v_k^1 + \partial_\ell \eta \partial_n v_k^1+\partial_n\eta \partial_\ell v_k^1\right)
+ \sum_{i,\ell=1}^n (y_n \partial_{\ell n}v^1) \partial_{v^1_i}a_\ell \partial_{i} \eta v_k^1 \\
&-  \sum_{i=1}^n \partial_{v^1_i}b \partial_i \eta v_k^1 
+ (y_n\Delta \eta + 2(\kp-1)\partial_n\eta) v^1_k+ 2y_n\nabla\eta\cdot\nabla v^1_k,\\
I_2:=&\sum_{j=2}^m \eta \partial_{v^j}b \partial_k v^j . 
\end{align*}
The norm of operator $\cG^1: W^{2,p}_*(\bR^n_+)\ra L^p(\bR^n_+)$  is  controlled by  $\e_0$ due to the assumption  \eqref{assumption-v-small} and definition of  $a_\ell$ and $b$.
 Similar to Step 1, the operator $\Delta_\gamma-\cG^1$ for $\gamma=2(\kp-1)$ is invertible and 
 \[
\norm{\hat v^1}_{W^{2,p}_*(\bR^n_+)}
\leq 2C_{n, p, \gamma}\norm{I_1+I_2}_{L^p(\bR^n_+)}.
\] 
The $L^p$-norm of $I_1$  is controlled by $\norm{  v^1}_{W^{2,p}_*(B_2^+)}$ and so by $\e_0$. 
The term $I_2$ can  also be estimated by  $\norm{\nabla \vv'}_{L^p(B_{9/5}^+)}$ since $\partial_{v^1_i} a_\ell $ and $\partial_{v^j}b $ are bounded.
All these together with the result of Step 1, that $\|\partial_k v^j\|_{L^p(B_{9/5}^+)}\leq C\e_0$, give
\begin{equation}\label{estimate-tangantial-direction}
\norm{\partial_k   v^1}_{W^{2,p}_*(B_{8/5}^+)}\leq C\e_0,
\end{equation} 
for a universal constant $C$ depending only on $n, p, \kp$.

Now repeat the above argument  for  derivative  with respect to  direction $\ee_n$. 
Notice that in this case ($k=n$), we shall add the following term in right hand side of \eqref{eq:v_k^1}
\[
-\Delta v^1 - \sum_{\ell}a_{\ell}\partial_{\ell n}  v^1,
\]
 where all terms except $\partial_{n n}  v^1$ can be controlled by the estimate \eqref{estimate-tangantial-direction}. 
 We  overcome this challenge by adding the term $\partial_n v_n^1$ on both sides, and continue other calculations with the operator 
 $y_n \Delta v_n^1 + (2\kp-1)\partial_n v_n^1$. 
 Finally we can get
\[
\norm{\partial_n   v^1}_{W^{2,p}_*(B_{3/2}^+)}\leq C\e_0.
\]


{\bf Step 3 -} $W^{3,p}_*$-\,estimate for $ v^j$ ($j>1$):\\

Let $v_k^j:=\partial_k v^j$ for a tangential direction $\ee_k$, $1\leq k<n$ and differentiate equation \eqref{degenerate-eq}  to get
\begin{align*}
y_n\Delta  v^j_k+2\kp\partial_n  v^j_k=&-\sum_{\ell} y_n a_\ell \partial_{\ell n} v_k^j -\sum_\ell \kp a_\ell \partial_\ell v_k^j - c \partial_n v_k^j \\
&- \sum_{i,\ell}  \partial_{v^1_i}a_\ell \partial_{ik} v^1( y_n \partial_{\ell n}v^j + \kp \partial_{\ell }v^j) \\
&-\left(\sum_{i=1}^n \partial_{v^1_i} c ~ \partial_{ik} v^1 + \sum_{s=2}^m \partial_{v^s} c ~ \partial_{k} v^s \right)\partial_n v^j
\end{align*}
Let $\eta$ be a cutoff function such that $\eta\equiv 1$ on $B_{5/4}^+$ and $\eta\equiv0$ on $\bR^n_+\setminus B_{3/2}^+$. 
Now let $ \hat v^j = \eta v^j_k$, then
\begin{align*}
y_n & \Delta  \hat v^j+2\kp\partial_n  \hat v^j + \cG^j(\hat v^j) = I_1 + I_2,
\end{align*}
where $\cG^j$ is the operator defined in Step 1 and 
\begin{align*}
I_1 =&  \sum_{\ell} y_n a_\ell (\partial_{\ell n} \eta v_k^j + \partial_\ell \eta \partial_n v_k^j + \partial_n \eta \partial_\ell v_k^j)
+ \left(\sum_\ell \kp a_\ell \partial_\ell \eta  + c \partial_n \eta \right) v_k^j\\
& +(y_n\Delta \eta +2\kp \partial_n\eta)v_k^j + 2y_n\nabla\eta\cdot\nabla v_k^j \\
I_2 = &- \sum_{i,\ell} \eta \partial_{v^1_i}a_\ell \partial_{ik} v^1( y_n \partial_{\ell n}v^j + \kp \partial_{\ell }v^j) 
-\left(\sum_{i=1}^n \partial_{v^1_i} c  \partial_{ik} v^1 + \sum_{s=2}^m \partial_{v^s} c  \partial_{k} v^s \right)\eta\partial_n v^j .
\end{align*}
We now apply  Theorem \ref{reg-deg-lap} to the  operator $\Delta_{2\kp}$, which is perturbed by operator $\cG^j$,
\[
\norm{\hat v^j}_{W^{2,p}_*(\bR^n_+)}\leq 2C_{n, p, 2\kp}\norm{(\Delta_{2\kp} + \cG^j)\hat v^j}_{L^p(\bR^n_+)},
\] 
so,
\begin{align*}
\norm{I_1}_{L^p(\bR^n_+)}\leq & C\e_0 \norm{ v^j}_{W^{2,p}_*(B_{3/2}^+)} \\ 
\norm{I_2}_{L^p(\bR^n_+)}\leq & C\left(\norm{D^2v^1}_{L^{2p}(B_{3/2}^+)} +\norm{\nabla \vv'}_{L^{2p}(B_{3/2}^+)} \right) \norm{ v^j}_{W^{2,2p}_*(B_{3/2}^+)} .
\end{align*}
This together with the results of steps 1 and 2 for $2p$, imply 
\[
\norm{\partial_k  v^j}_{W^{2,p}_*(B_{5/4}^+)}\leq C\e_0,
\]
which can be used to repeat the argument to show  similar estimate for $\partial_n  v^j$.
\end{proof}

\medskip


\section{Proof of Theorems \ref{main-result} and \ref{thm:existence}}\label{section:main-result}

\subsection{An infinitesimal translation}
We first introduce here a one-parameter family of diffeomorphisms which is obtained from the following ODE
\begin{equation}\label{ODE}
\begin{aligned}
\phi'(t)=&a((3/4)^2-|\phi(t)|^2)_+^3\eta(z_n),\\
\phi(0)=&z',
\end{aligned}
\end{equation}
where $a\in \bR^{n-1}$ and $z\in\bR^n$ are fixed.
Here, $\eta$ is a smooth cut-off function supported in $|z_n|\leq 1/2$ and $\eta\equiv1$ for $|z_n|\leq 1/4$.
We denote the unique solution to \eqref{ODE} by $\phi_{a,z}(t)$ and define the diffeomorphism 
\[
\Phi_a(z):=(\phi_{a,z}(1),z_n).
\]
The following is a classical result in the theory of ODEs for the regularity of this family of diffeomorphisms.

\begin{lemma}
For each $a\in\bR^{n-1}$ fixed, $\Phi_a:\bR^{n}\ra\bR^{n}$ is a $C^3$ diffeomorphism. Furthermore, the mapping $a\mapsto \Phi_a\in C^3(\bR^n)$ is analytic.
\end{lemma}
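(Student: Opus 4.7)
The plan is standard flow-of-ODE theory for a compactly supported vector field, combined with classical results on smooth and analytic dependence on parameters. First I would verify that \eqref{ODE} is globally well-posed. The right-hand side, viewed as $F(\phi; a, z_n) := a\,((3/4)^2 - |\phi|^2)_+^3\,\eta(z_n)$, is compactly supported in $\phi$ (vanishing outside $|\phi| \le 3/4$), globally bounded by $C|a|$, and Lipschitz in $\phi$. Picard--Lindel\"of then yields a unique global solution $\phi_{a,z}(t)$ for every $(a, z', z_n)$, so $\Phi_a$ is well-defined on all of $\bR^n$. Because the vector field is autonomous in $t$ and only acts on the first $n-1$ coordinates, the time-$(-1)$ flow of the same ODE provides a two-sided inverse of $\Phi_a$, establishing bijectivity.

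For the $C^3$ regularity in $z$, I would invoke the classical theorem on smooth dependence of solutions on initial data: formally differentiating \eqref{ODE} in the components of $z$ produces the variational equations, which are linear ODEs whose coefficients are controlled by the $C^k$ norm of $F$ in $\phi$. Iterating up to order three and applying Gr\"onwall bounds yields $\phi_{a,\cdot}(1) \in C^3(\bR^n;\bR^{n-1})$, while smoothness in $z_n$ is immediate from that of $\eta$. The Jacobian of $\Phi_a$ can be read off from the first variational equation and is non-singular (of the form $I + O(|a|)$ in the $z'$-block, plus the identity in the $z_n$-direction), confirming the diffeomorphism property.

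For the analyticity of $a \mapsto \Phi_a \in C^3(\bR^n)$, the decisive feature is that the right-hand side of \eqref{ODE} is affine, hence entire, in $a$. I would expand the solution as a formal power series around a fixed $a_0$,
\[
\phi_{a,z}(t) = \sum_{\mu \in \bZ^{n-1}_+} \frac{(a-a_0)^\mu}{\mu!}\,\phi^{(\mu)}(t,z),
\]
and derive the hierarchy of linear ODEs for the coefficients $\phi^{(\mu)}$ by successive $a$-differentiation at $a_0$. Using the uniform $C^3$-boundedness of the cutoff $g(\phi) := ((3/4)^2 - |\phi|^2)_+^3$ together with Gr\"onwall estimates and the Fa\`a di Bruno formula, I would establish factorial bounds of the form $\|\phi^{(\mu)}(1,\cdot)\|_{C^3(\bR^n)} \le C^{|\mu|}\mu!$, giving a positive radius of convergence in $C^3(\bR^n)$ and hence the claimed analyticity. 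The hardest part will be this combinatorial bookkeeping; the affine dependence of $F$ on $a$ should tame it, since only the zero- and first-order $a$-derivatives of $F$ are nonzero, leaving all recursion in the chain rule applied to $\phi_{a,z}$ itself.
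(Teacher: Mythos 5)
The paper offers no proof of this lemma at all --- it simply declares it ``a classical result in the theory of ODEs.'' So you are not deviating from the paper's argument; you are supplying an argument where the paper supplies none. Your overall strategy (global well-posedness by Picard--Lindel\"of, variational equations and Gr\"onwall for regularity in $z$, a power-series expansion in $a$ with factorial bounds via Fa\`a di Bruno for analyticity) is the right framework. However, there are two genuine gaps.

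First, you assert ``the uniform $C^3$-boundedness of the cutoff $g(\phi)=((3/4)^2-|\phi|^2)_+^3$,'' but $s\mapsto s_+^3$ has third derivative $6\chi_{\{s>0\}}$, which is discontinuous; hence $g\in C^{2,1}$ but $g\notin C^3$. Consequently the flow $\Phi_a$ is only $C^{2,1}$ in $z'$ (equivalently $W^{3,\infty}$), not $C^3$: at $|z'|=3/4$ the third variational equation is driven by the one-sided limit of $D^3g$, which is a nonzero constant from inside and zero from outside, so $D^3_{z'}\Phi_a$ has a jump across the sphere $|z'|=3/4$ whenever $a\neq0$ and $\eta(z_n)\neq0$. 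The lemma's assertion ``$C^3$'' should really read $C^{2,1}$ (or $W^{3,\infty}$), which is still enough for the paper's subsequent claim that $\Phi_a$ preserves $W^{3,p}_*$; but a proof cannot simply cite $C^3$-boundedness of $g$.

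Second, and more seriously, affine dependence of the right-hand side on $a$ does \emph{not} by itself yield analyticity of $a\mapsto\phi_{a,z}(1)$: the $k$-th $a$-derivative $\psi_k=\partial_a^k\phi$ satisfies a variational ODE whose inhomogeneous terms involve $D^jg(\phi)$ for $j\le k$, so factorial bounds of the form you want require $g$ to be analytic (with radius of convergence uniform along trajectories), not merely smooth. Indeed, with a smooth but non-analytic compactly supported cutoff, the time-one map is smooth but generically \emph{not} analytic in $a$ (in one dimension this can be seen explicitly from $\int_{z}^{\phi_a(1)}d\phi/g(\phi)=a$). Your argument nowhere uses the specific structure of the cutoff; the decisive structural fact you are missing is that $g$ is a \emph{polynomial} on the open ball $\{|\phi|<3/4\}$, which is invariant under the flow (since $g$ and $Dg$ vanish on the boundary sphere), and that $\Phi_a$ is the identity outside. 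Restricted to the invariant ball, the vector field is jointly polynomial in $(\phi,a)$, so the flow is jointly analytic there; together with the trivial behavior outside and the matching of derivatives up to order two across the sphere, this gives analyticity of $a\mapsto\Phi_a\in C^{2,1}(\bR^n)$ near $a=0$. Your factorial-bound/Fa\`a di Bruno bookkeeping is then the right tool, but it must be carried out with the Cauchy estimates for the polynomial $g$ on the confined trajectory region, not with a (false) global $C^3$ bound.
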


 The diffeomorphism $\Phi_a$ has other useful properties as well. 
 We just present the results here since the proof is a matter of computation and can be settled easily.
 
 \begin{lemma}
 \begin{enumerate}[(i)]
 \item For every $a\in\bR^{n-1}$, we have $\Phi_a(\{z_n=0\})\subset \{z_n=0\}$.
 In addition, $\Phi_a(z)=z$ when $z\notin \{z:|z'|\leq \frac34, |z_n|\leq \frac12\}$.
 
 \item For every $a\in\bR^{n-1}$, we have $\partial_{z_n}\Phi_a(z)=(0,\cdots,0,1)$ when $|z_n|\leq 1/4$.
 
 \item For $a=0$, we have $\Phi_0(z)=z$ for all $z\in\bR^n$.
 \end{enumerate}
 \end{lemma}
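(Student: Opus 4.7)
The plan is to establish each property directly from the defining ODE \eqref{ODE}, exploiting the structural feature that $\phi_{a,z}$ evolves only in the tangential variables while $z_n$ enters the right-hand side solely as a parameter through the cut-off $\eta(z_n)$. Property (iii) is immediate: with $a=0$ the ODE reduces to $\phi'(t)=0$, so $\phi_{0,z}(t)\equiv z'$ and $\Phi_0(z)=z$.

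For property (i), the invariance $\Phi_a(\{z_n=0\})\subset\{z_n=0\}$ is automatic from $\Phi_a(z)=(\phi_{a,z}(1),z_n)$ preserving the last coordinate. To show $\Phi_a(z)=z$ for $z\notin\{|z'|\le 3/4,\,|z_n|\le 1/2\}$, I split into two cases. When $|z_n|>1/2$ the cut-off $\eta(z_n)$ vanishes, so the ODE reads $\phi'\equiv 0$ and $\phi_{a,z}(t)\equiv z'$. When $|z'|>3/4$ the constant function $\phi(t)\equiv z'$ solves the ODE because $((3/4)^2-|z'|^2)_+=0$; uniqueness for locally Lipschitz ODEs then forces $\phi_{a,z}(t)\equiv z'$.

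For property (ii), set $\psi(t):=\partial_{z_n}\phi_{a,z}(t)$ and differentiate the ODE with respect to the parameter $z_n$. Writing the right-hand side as $g(z_n,\phi)=a\bigl((3/4)^2-|\phi|^2\bigr)_+^3\eta(z_n)$, the chain rule produces the linear sensitivity equation
\[
\psi'(t)=\bigl(D_\phi g\bigr)(z_n,\phi(t))\,\psi(t)+a\bigl((3/4)^2-|\phi(t)|^2\bigr)_+^3\eta'(z_n),
\]
with initial value $\psi(0)=0$ since $\phi(0)=z'$ is independent of $z_n$. On $\{|z_n|\le 1/4\}$ the cut-off $\eta$ equals $1$ identically, hence $\eta'(z_n)=0$, and the unique solution of the resulting homogeneous linear ODE with zero initial data is $\psi\equiv 0$. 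Evaluating at $t=1$ gives $\partial_{z_n}\phi_{a,z}(1)=0$, which together with $\partial_{z_n}z_n=1$ yields $\partial_{z_n}\Phi_a(z)=(0,\dots,0,1)$. None of the three parts presents a genuine obstacle; the only point requiring care is that $s\mapsto (s)_+^3$ is merely $C^2$, which is the source of the $C^3$ (rather than smooth) regularity asserted in the preceding lemma but does not affect the single-derivative computation above.
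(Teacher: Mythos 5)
Your computations are correct and supply exactly what the paper omits: the paper states this lemma without proof, saying only that ``the proof is a matter of computation and can be settled easily,'' so there is no paper argument to compare against. Part (iii) follows immediately from $\phi'\equiv 0$; part (i) correctly separates the preservation of the $z_n$-coordinate (built into the definition) from the fixed-point property on the complement, which you handle by vanishing of $\eta$ when $|z_n|>1/2$ and by ODE uniqueness (right-hand side locally Lipschitz in $\phi$) when $|z'|>3/4$; part (ii) uses the variational equation in the parameter $z_n$ with zero initial data and zero forcing on $\{|z_n|\le 1/4\}$, yielding $\psi\equiv 0$.

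One small correction to your closing remark, which is incidental to the lemma itself: you are right that $s\mapsto (s)_+^3$ is $C^2$ (in fact $C^{2,1}$, since $(s)_+^3{}''=6s_+$ is Lipschitz), but a $C^2$ right-hand side gives $C^2$ dependence of the flow on initial data, not $C^3$, so this does not straightforwardly account for the $C^3$ regularity asserted in the preceding lemma; if anything the natural conclusion from your observation would be $C^{2,1}$. This has no bearing on parts (i)--(iii), whose computations only involve first derivatives and are all valid.
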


\begin{remark}\label{rmk:phi-diffeomor}
Indeed, $\psi_a(t)=\frac{\partial}{\partial a_i} \phi_{a,z}(t)$ satisfies 
\[
\psi'_a(t)=((3/4)^2-|\phi_{a,z}|^2)_+^3\eta(z_n)\ee_i-6a((3/4)^2-|z'|^2)_+^2\eta(z_n)(\phi_{a,z}\cdot\psi_a)\,,
\]
and for the derivative at $a=0$, the equation will simplify to 
\[
\psi_0'(t)=((3/4)^2-|z'|^2)_+^3\eta(z_n)\ee_i\,.
\]
This shows that the derivative of $a\mapsto \phi_{a,z}(1)$ is invertible at $a=0$, so for a fix point $z$ map $a\mapsto \phi_{a,z}(1)$ is an analytic diffeomorphism in a neighborhood of $a=0$ in $\bR^{n-1}$. We use this property later.
\end{remark}

Let $\vv$ be the Legendre vectorial function introduced in Section \ref{Section:Hodograph} and satisfy \eqref{degenerate-eq}.
Now we use the family of diffeomorphisms $\Phi_a$  to define a one-parameter family of  functions 
\[
\vv_a(z):= \vv (\Phi_a(z)).
\]
We  observe that the space $W^{3,p}_*$ is stable under the diffeomorphism $\Phi_a$.
\begin{lemma}
If $v\in W^{3,p}_*(B_1^+)$, then $v_a=v\circ\Phi_a\in W^{3,p}_*(B_1^+)$.
\end{lemma}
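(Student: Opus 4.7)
The plan is to exploit the structural feature of $\Phi_a$ that its $n$-th coordinate is preserved: by definition $\Phi_a(z) = (\phi_{a,z}(1), z_n)$, so $(\Phi_a(z))_n = z_n$. Together with the preceding lemma, $\Phi_a$ is a $C^3$-diffeomorphism of $\bR^n$ that equals the identity outside the compact set $K = \{|z'| \leq 3/4,\ |z_n| \leq 1/2\}$; since the bump factor $((3/4)^2 - |\phi|^2)_+^3$ vanishes on the sphere $\{|z'| = 3/4\}$, the flow keeps $\{|z'| \leq 3/4\}$ invariant, so $\Phi_a$ restricts to a diffeomorphism of $B_1^+$ onto itself with $\norm{D^k \Phi_a}_{L^\infty(B_1^+)}$ for $k \leq 3$ and $\norm{(D\Phi_a)^{-1}}_{L^\infty(B_1^+)}$ uniformly bounded and Jacobian bounded away from zero. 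The standard change-of-variables estimate then yields $\norm{f \circ \Phi_a}_{L^p(B_1^+)} \leq C\norm{f}_{L^p(B_1^+)}$ for every $f \in L^p(B_1^+)$.

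Next I would apply the Fa\`a di Bruno formula to expand $D^k v_a(z)$, for $k = 1, 2, 3$, as a finite sum of terms of the form $(D^j v)(\Phi_a(z)) \cdot D^{\alpha_1}\Phi_a \otimes \cdots \otimes D^{\alpha_j}\Phi_a$, with $\alpha_1 + \cdots + \alpha_j = k$ and $1 \leq j \leq k$. For $k \leq 2$ every $\Phi_a$-factor is bounded, and $\nabla v, D^2 v \in L^p(B_1^+)$ (since $\nabla v \in W^{1,p}$), so each summand is a bounded coefficient times an $L^p$ function. This delivers $\nabla v_a \in W^{1,p}(B_1^+)$ with a quantitative norm bound.

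The only genuinely delicate step is the weighted third-derivative estimate $z_n D^3 v_a \in L^p(B_1^+)$. The lower-order summands of $D^3 v_a$, namely those involving $(D^2 v)\circ \Phi_a \cdot D\Phi_a \otimes D^2\Phi_a$ and $(Dv)\circ \Phi_a \cdot D^3\Phi_a$, are bounded coefficients times $L^p$ functions, so multiplication by the bounded weight $z_n$ keeps them in $L^p$. The top-order contribution $(D^3 v)(\Phi_a(z)) \cdot (D\Phi_a)^{\otimes 3}$ is where the degenerate weight must be transferred across the nonlinear change of variables; here the coordinate-preserving identity $(\Phi_a(z))_n = z_n$ permits the rewriting
\[
z_n\, (D^3 v)(\Phi_a(z)) = \bigl((y_n D^3 v) \circ \Phi_a\bigr)(z),
\]
whose $L^p(B_1^+)$-norm is bounded by $\norm{y_n D^3 v}_{L^p(B_1^+)} < \infty$ via the change-of-variables step of the first paragraph. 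Combining yields $v_a \in W^{3,p}_*(B_1^+)$ together with the operator bound $\norm{v_a}_{W^{3,p}_*(B_1^+)} \leq C \norm{v}_{W^{3,p}_*(B_1^+)}$, with $C$ uniform for $a$ in bounded sets. The main obstacle is exactly this transfer of the degenerate weight through a nonlinear composition, and the design choice that $\Phi_a$ preserves the $n$-th coordinate globally is precisely what trivializes the step.
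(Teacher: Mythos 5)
Your argument is correct, and it identifies the one genuinely nontrivial point: because $\Phi_a(z) = (\phi_{a,z}(1), z_n)$ fixes the last coordinate, the degenerate weight commutes with the composition via $z_n\,(D^3 v)(\Phi_a(z)) = ((y_n D^3 v)\circ\Phi_a)(z)$, after which the remaining estimates are standard change-of-variables and chain-rule bookkeeping using the $C^3$-boundedness of $\Phi_a$ and the fact that $\Phi_a$ restricts to a self-diffeomorphism of $B_1^+$ (since the flow vanishes on $\{|z'|=3/4\}$ and $\eta$ is supported in $|z_n|\le 1/2$). The paper states this lemma without proof, so there is no proof to compare against; what you wrote is the natural argument the authors evidently have in mind, and it is complete.
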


\medskip

Notice that the coefficients in system  \eqref{degenerate-eq} depends analytically on $\vv$ and $\nabla \vv$. 
We denote this system by
\[
\FF(\vv,y)=0.
\]
Now we define a new system of nonlinear degenerate elliptic  equations for vectorial function $\vv_a(z)$ as follows,
\[
\FF_a(\uu,z):=\FF(\uu(\Phi_a^{-1}(y)),y)|_{y=\Phi_a(z)}=0.
\]
The idea is that to invoke the analytic implicit function theorem for the later system, we then establish that solutions  are necessarily analytic in the parameter $a$.
It  entails that the family $\vv_a$ depends on $a$ analytically, due to the uniqueness of solutions. 
Seeing that the family of diffeomorphisms, $\Phi_a$, infinitesimally generates translations in the tangential directions, this immediately implies that the original Legendre function $\vv$ is analytic in the tangential variables.
In particular, $y'\mapsto v^1(y',0)$, which represents the free boundary, is an analytic function.


\subsection{Analyticity of the free boundary}\label{subsection:proof of main result}
Here, we prove the first part of Theorem \ref{main-result}.
The proof is based on the application of the implicit function theorem.

In order to avoid difficulties outside of $B_1^+$, we base our argument on $\ww_a:=\vv_a-\vv$. 
We first note that $\supp(\ww_a)\Subset B^+_{7/8}$, which follows from the definition of the diffeomorphism $\Phi_a$.
Moreover, $\ww_a$ solves the following degenerate elliptic system:
\[
\tilde \FF_a(\ww,z):=\FF_a(\ww+\vv,z)=0,\quad\text{in }B_1^+.
\]
Since the operator $\tilde \FF_a$ is well-defined only when $\partial_n w^1\neq -(1+\partial_n v^1)$ (recall the coefficients of $\FF$), we use another operator instead, that is defined as follows:
\[
\GG_a(\ww,z):=\eta(z)\tilde \FF_a(\ww,z)+(1-\eta(z))\cQ \ww,
\]
where  $\eta$ is a cut-off function such that $\eta\equiv1$ on $B_{7/8}^+$  and $\eta\equiv0$ in $\bR^n_+\setminus B_{1}^+$. 
In addition, the operator $\cQ=(\cQ^1, \cdots , \cQ^m)$ is
\[
\begin{split}
\cQ^1\ww := & z_n \Delta w^1+2(\kp-1)\partial_n w^1,\\
\cQ^j\ww := & z_n \Delta w^j+2\kp\partial_n w^j, \quad\text{ for }j>1.
\end{split}
\]

Now, denote  the ball in $W^{3,p}_*(\bR^n_+)$ with radius $\epsilon$ by 
\[
\cB_\epsilon:=\{w\in W^{3,p}_*(\bR^n_+):w(0)=0, \ \|w\|_{W^{3,p}_*(\bR^n_+)}<\epsilon\}.
\] 
In this context, $w(0)=0$ means that we choose a representation that vanishes in origin (recall the definition of the quotient space $W^{k,p}_*$ in Section \ref{section:degenerate}).
For any $w\in W^{3,p}_*$ we have $\nabla w\in W^{1,p}\subset C^{0,\gamma}$ when $p>n$. 
So, if we choose $\epsilon$ small enough, then $|\nabla w^1(x)|<\frac12$ for every $w^1\in \cB_\epsilon$ and all $x\in B_{1}^+$, particularly $\partial_n w^1(x)+\partial_nv^1(x)>-\frac12-\e_0> -1$ (assumption \eqref{assumption-v-small}).
Moreover, for every $w^j\in \cB_\epsilon$, we have $|w^j(x)+v^j(x)|\leq \alpha$ for $j\geq 2$ and every $x\in B_{1}^+$. 
The later shows that the term $|\ww'+\vv'|_*^{q-1}$ in  the system  $\tilde \FF_a(\ww,z)=0$ is analytic when $\ww \in (\cB_\epsilon)^m$ for  sufficiently small $\epsilon$.
This shows that the operator $\GG_a$ is well-defined and analytic in $(\cB_\epsilon)^{m}$. 
We summarize the argument in the next lemma:

\begin{lemma}
If $\e_0$ in \eqref{assumption-v-small} is small enough, then  the mapping
\[
\cF:\bR^{n-1}\times(\cB_\epsilon)^{m}\rightarrow (W^{1,p}(\bR^n_+))^m,\qquad\cF(a,\uu)=\GG_a(\uu,\cdot\,)
\]
is  an analytic operator for sufficiently small $\epsilon$.
\end{lemma}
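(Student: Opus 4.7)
The plan is to write $\cF(a,\ww)$ as a composition of three families of analytic building blocks: (i) the pull-backs $T_a:\ww\mapsto \ww\circ \Phi_a^{\pm 1}$, which are linear in $\ww$ and analytic in $a$; (ii) pointwise algebraic operations (sums and products), which are bounded multilinear on $W^{1,p}(\bR^n_+)$ since $p>n$ gives the embedding $W^{1,p}\hookrightarrow C^0$ and the Banach-algebra structure; (iii) Nemytskii operators $\uu\mapsto F(\uu,\nabla \uu)$ associated with the real-analytic coefficients $a_\ell,b,c$ and with $|\cdot|_*^{q-1}$, which are real-analytic from $(\cB_\epsilon)^m$ into $W^{1,p}$ as long as $(\uu,\nabla\uu)$ ranges in a fixed open set of analyticity of $F$. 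Throughout, the embedding $W^{3,p}_*\hookrightarrow C^{1,\gamma}$ combined with the smallness assumption \eqref{assumption-v-small} forces $1+\partial_n(w^1+v^1)\ge 1/2-\e_0>0$ and $|(\ww+\vv)'|_*$ to lie in a small neighborhood of $1$, which is precisely where the relevant analytic functions are defined.

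First I would verify well-definedness of $\cF$ into $(W^{1,p}(\bR^n_+))^m$. Each component of $\GG_a(\ww,\cdot)$ is a sum of products involving either $\nabla w$ (which lies in $W^{1,p}$ by definition of $W^{3,p}_*$) or weighted second derivatives $z_n D^2 w$, whose gradient $D^2 w\,\ee_n+ z_n D^3 w$ belongs to $L^p$ since $D^2 w\in L^p$ (from $\nabla w\in W^{1,p}$) and $z_n D^3 w\in L^p$ (definition of $W^{3,p}_*$), multiplied by analytic functions of $\big(\vv+\ww,\nabla(\vv+\ww)\big)$. Since $W^{1,p}$ is a Banach algebra and composition with a real-analytic function of a bounded $W^{1,p}$ argument is analytic into $W^{1,p}$, this gives (ii) and (iii), and hence analyticity of $\ww\mapsto \tilde\FF_0(\ww,\cdot)$ on $(\cB_\epsilon)^m$. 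The cutoff $\eta$ and the auxiliary piece $(1-\eta)\cQ\ww$ are manifestly analytic, being linear in $\ww$ and independent of $a$.

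For the dependence on $a$, I would invoke the preceding lemma: $a\mapsto \Phi_a$ is analytic into $C^3(\bR^n;\bR^n)$, and by the analytic inverse function theorem (or by direct solution of the ODE \eqref{ODE}), so is $a\mapsto \Phi_a^{-1}$. Pull-back $T_a$ then defines an analytic map from a neighborhood of $a=0$ in $\bR^{n-1}$ into the space of bounded linear operators on $W^{3,p}_*$; the decisive point is property (ii), namely $\partial_{z_n}\Phi_a = \ee_n$ for $|z_n|\le 1/4$, which ensures the weight $y_n=z_n$ is strictly preserved in the degenerate region and so $T_a$ sends $W^{3,p}_*$ into itself with an analytic dependence on $a$. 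Since $\cF(a,\ww)$ is the composition of $a\mapsto T_a$, the Nemytskii step of the previous paragraph, and one further pull-back, the full operator is analytic in $(a,\ww)$.

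The main obstacle is precisely this stability of the weighted space $W^{3,p}_*$ under $T_a$ and the analyticity of the resulting family: because the weight $z_n$ is intrinsic rather than produced by a smooth Jacobian, one must genuinely use property (ii) to transfer $L^p$-bounds on $z_n D^3 \ww$, rather than rely on chain-rule estimates alone. Away from $\{z_n=0\}$ the factor $z_n$ is smooth and bounded, so no true degeneracy survives in the complementary region and standard Sobolev--Nemytskii theory suffices. Once the analyticity of $T_a$ on $W^{3,p}_*$ is established, the remaining verifications are routine applications of the analytic calculus on Banach spaces.
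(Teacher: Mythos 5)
Your overall strategy (exploit the embedding $W^{3,p}_*\hookrightarrow C^{1,\gamma}$ for $p>n$ plus the smallness of $\e_0$ and $\epsilon$ to keep the analytic coefficient functions away from their singularities, and then reduce to Banach-algebra and Nemytskii facts) matches what the paper sketches before stating the lemma. However, there is a genuine gap in your treatment of the $a$-dependence.

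You claim that the pullback $T_a:\ww\mapsto\ww\circ\Phi_a$ ``defines an analytic map from a neighborhood of $a=0$ in $\bR^{n-1}$ into the space of bounded linear operators on $W^{3,p}_*$.'' That claim is false, and the argument cannot be repaired along that line. Indeed, for a fixed $v\in W^{3,p}_*$ one has
\[
\partial_{a_i}\bigl(v\circ\Phi_a\bigr)(z)=\nabla v\bigl(\Phi_a(z)\bigr)\cdot\partial_{a_i}\Phi_a(z),
\]
which involves one more derivative of $v$ than the left-hand side. Hence $a\mapsto T_a v$ is continuous but not even differentiable as a map into $W^{3,p}_*$; a fortiori $a\mapsto T_a$ is not analytic into $\mathcal{L}(W^{3,p}_*)$. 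This is the standard derivative-loss phenomenon for composition operators, and your cutoff property (ii) ($\partial_{z_n}\Phi_a=\ee_n$ near $z_n=0$) does not resolve it --- it only controls the weight, not the lost tangential derivative.

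The way this is actually handled (implicitly in the paper, and explicitly in Angenent-type arguments) is to observe that the conjugation $T_a\circ\FF\circ T_a^{-1}$ is analytic in $a$ even though neither factor is. When you write out $\FF_a(\uu,z)=\FF(\uu\circ\Phi_a^{-1},\cdot)\circ\Phi_a(z)$ using the chain rule, the compositions on the unknown cancel: $\uu\circ\Phi_a^{-1}\circ\Phi_a=\uu$, and the derivatives of $\uu\circ\Phi_a^{-1}$ evaluated at $y=\Phi_a(z)$ become $\nabla\uu(z)$, $D^2\uu(z)$, $\ldots$ multiplied by Jacobians $D\Phi_a(z)^{-1}$, $D^2\Phi_a(z)$, etc. Thus $\FF_a(\uu,\cdot)$ is a nonlinear second-order differential operator whose coefficients involve only $\uu$ and its derivatives evaluated at $z$, together with smooth functions of $z$ built from $D^k\Phi_a$ ($k\le 3$); the latter depend analytically on $a$ because $a\mapsto\Phi_a\in C^3$ does. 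No derivative of $\uu$ is lost. With this observation, your Nemytskii and Banach-algebra steps do close the argument; without it, your proposed decomposition through operator-norm analyticity of $T_a$ does not. You should also note that the lemma ``$v\in W^{3,p}_*\Rightarrow v\circ\Phi_a\in W^{3,p}_*$'' in the paper is used only for stability (to verify $\ww_a\in W^{3,p}_*$), not for any analyticity in $a$, and the analyticity of $a\mapsto\ww_a$ is a \emph{conclusion} of the implicit function theorem, not an input.
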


Note that 
\[
\cF(0,0)=\GG_0(0,z)=\eta(z)\tilde\FF_0(0,z)=\eta(z) \FF_0(\vv,z)=\eta(z)\FF(\vv,z)=0,
\]
and we are going to show that the existence of the family of solutions $\cF(a,\ww_a)=0$ and its analytic dependency to parameter $a$.
In sequel, we show the linearization operator of $\GG_a$ at $\ww=0$ and $a=0$, denoted by 
\[
D_{\ww} \GG_a|_{(a,\ww)=(0,0)}:(W^{3,p}_*(\bR^n_+))^m\rightarrow (W^{1,p}(\bR^n_+))^m,
\]
 is invertible. 
First, let $D_{\ww}\tilde \FF_a|_{(a,\ww)=(0,0)}=: \cQ + \cP $ be the linearization of $\tilde \FF_a$ and consider $\cP=(\cP^1, \cdots , \cP^m)$.
Since $D_z\Phi_a|_{a=0}$ is identity, then
\begin{align*}
\cP^1 \ww=&z_n \sum_\ell a_\ell \partial_{\ell n}w^1+ \sum_{i,\ell} (z_n \partial_{\ell n}v^1\partial_{v_i^1}a_\ell)\partial_i w^1+\sum_{i=1}^n \partial_{v_i^1}b \, \partial_i w^1 + \sum_{j=2}^m \partial_{v^j}b \, w^j,\\[10pt]
\cP^j \ww=&  \sum_\ell a_\ell (z_n\partial_{\ell n}w^j+\kp\partial_\ell w^j)+ \sum_{i,\ell} (z_n \partial_{\ell n}v^j+\kp \partial_\ell v^j)\partial_{v_i^1}a_\ell \partial_i w^1\\
&+ c \partial_n w^j + \sum_{i} \partial_{v_i^1}c\, \partial_n v^j \partial_i w^1 + \sum_{s} \partial_{v^s} c \,  \partial_n v^j w^s.
\end{align*}
This yields that 
\[
D_{\ww} \GG_a|_{(a,\ww)=(0,0)}=\cQ +\eta\,\cP.
\]
The operator $\cQ: (W^{3,p}_*(\bR^n_+))^m\rightarrow (W^{1,p}(\bR^n_+))^m$ is invertible by virtue of Theorem \ref{reg-deg-lap}. 
The next lemma (Lemma \ref{lem-norm-operator-eta-P}) shows that if we choose $\e_0$ in \eqref{assumption-v-small} small enough, the norm of operator $\eta\,\cP$ is small so that the operator $\cQ +\eta\,\cP$ is also invertible.

Since the function $a\mapsto \GG_a(\ww,\cdot)$ is analytic, the implicit function theorem implies that $a\mapsto \ww_a$ is analytic in $a$ in a neighborhood of $a=0$. 
In particular, $a\mapsto v^1_a(z',z_n)=v^1(\phi_{a,z}(1),z_n)$ is analytic. 
Since the derivative of $a\mapsto \phi_{a,z}(1)$ is invertible at $a=0$, so for a fix point $z$ map $a\mapsto \phi_{a,z}(1)=b$ is an analytic diffeomorphism in a neighborhood of $a=0$ in $\bR^{n-1}$ (see Remark \ref{rmk:phi-diffeomor}). Thus $b\mapsto a\mapsto v^1(b,z_n)$ must be analytic when $z_n$ is fixed and small enough.
For later reference we  summarize  the result in the following proposition.

\begin{proposition}\label{prop:tangential-analyticity}
Suppose $\vv$ is a solution of system \eqref{degenerate-eq} satisfying \eqref{assumption-v-small}. 
If $\e_0$ is small enough then $\vv$ is tangentially analytic in a neighborhood of $y=0$.  
Indeed,  map $\mathfrak{B}:B_1'\subset\bR^{n-1}\rightarrow (\cB_\epsilon)^{m}$ with $\mathfrak{B}(y')= \vv(y',\cdot)$ is analytic at $y'=0$.
\end{proposition}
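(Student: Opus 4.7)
The strategy is to apply the analytic implicit function theorem to the map $\cF(a,\ww)=\GG_a(\ww,\cdot)$ between Banach spaces, conclude that the one-parameter family $\ww_a$ of solutions depends analytically on $a\in\bR^{n-1}$ near $a=0$, and then convert this analytic dependence on $a$ into analytic dependence on the tangential variable $y'$ via the explicit formula $\Phi_a(z)=(\phi_{a,z}(1),z_n)$ together with Remark~\ref{rmk:phi-diffeomor}.

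\textbf{Key steps.} First, I would record that $\cF(0,0)=0$ because $\Phi_0=\mathrm{id}$ and $\vv$ itself solves \eqref{degenerate-eq}. The analyticity of $\cF$ on $\bR^{n-1}\times (\cB_\epsilon)^m$ into $(W^{1,p}(\bR^n_+))^m$ combines three facts: the dependence of $\Phi_a$ and its inverse on $a$ is analytic in the $C^3$-topology, the coefficients $a_\ell$, $b$, $c$ in \eqref{coefficient-eq}--\eqref{eq:eqc} are analytic in their arguments (the only non-polynomial term, $|\ww'+\vv'|_*^{q-1}$, is analytic once $|\ww'+\vv'/\alpha|$ stays close to~$1$, which is enforced by $\e_0$ small and $\ww\in (\cB_\epsilon)^m$), and the embedding $W^{3,p}_*\hookrightarrow C^{1,\gamma}$ for $p>n$ makes pointwise composition and multiplication safe. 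Next, I would compute $D_\ww\GG_a|_{(0,0)}=\cQ+\eta\cP$ as in the excerpt, and use Theorem~\ref{reg-deg-lap} componentwise to see that $\cQ:(W^{3,p}_*(\bR^n_+))^m\to (W^{1,p}(\bR^n_+))^m$ is an isomorphism (the two diagonal values $\gamma=2(\kp-1)$ and $\gamma=2\kp$ are strictly positive since $\kp\ge 2$).

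\textbf{Main obstacle.} The crux is the anticipated Lemma~\ref{lem-norm-operator-eta-P}: proving that $\|\eta\cP\|_{(W^{3,p}_*)^m\to (W^{1,p})^m}\le C\e_0$, so that $\cQ+\eta\cP$ is invertible by Neumann series once $\e_0$ is chosen small relative to $\|\cQ^{-1}\|$. Inspecting the formulas for $\cP^1$ and $\cP^j$, each term is the product of one factor built from $\vv$ (such as $a_\ell$, $z_n\partial_{\ell n}v^j$, $b$, $c$, or $\partial_n v^j$) with one first derivative of $\ww$. Since $a_\ell$, $b$, $c$ vanish when $\nabla v^1=0$ and $\vv'=0$, they are $O(\e_0)$ in suitable norms; Proposition~\ref{prop:v-W3p} promotes $\vv$ to $W^{3,p}_*$ with norm $C\e_0$, which controls $z_n D^2\vv$ and, via Sobolev embedding on $W^{2,p}_*$, also $\nabla\vv$ in $L^\infty$ on $B_1^+$. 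The product estimates then reduce to $W^{1,p}$-boundedness of pointwise multiplication by $C^{0,\gamma}$ functions, together with the fact that multiplication by $z_n$ maps $L^p$ of $D^3\ww$ into $L^p$ (the $W^{3,p}_*$ norm). The cutoff $\eta$ kills any contribution from outside $B_1^+$ and ensures everything lives on the half-space.

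\textbf{Conclusion.} Once invertibility of $\cQ+\eta\cP$ is established, the analytic implicit function theorem produces a unique analytic map $a\mapsto \ww_a\in (\cB_\epsilon)^m$ defined in a neighborhood of $a=0$ with $\cF(a,\ww_a)=0$; uniqueness forces this to coincide with $\vv\circ\Phi_a-\vv$. For each fixed small $z_n$, the map $a\mapsto \phi_{a,(y',z_n)}(1)$ is, by Remark~\ref{rmk:phi-diffeomor}, an analytic diffeomorphism between neighborhoods of $0$ in $\bR^{n-1}$ and of $y'$ in $\bR^{n-1}$; composing with its analytic inverse yields analyticity of $y'\mapsto \vv(y',z_n)$ near $y'=0$. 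Interpreting this jointly for $z_n$ in a small interval and using that $\vv$ takes values in the Banach space $\cB_\epsilon$ (in its dependence on the normal variable), we obtain the stated analytic map $\mathfrak B(y')=\vv(y',\cdot)$.
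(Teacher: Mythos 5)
Your proposal is correct and follows essentially the same approach as the paper: apply the analytic implicit function theorem to $\cF(a,\ww)=\GG_a(\ww,\cdot)$, using Theorem~\ref{reg-deg-lap} for the invertibility of $\cQ$ and Lemma~\ref{lem-norm-operator-eta-P} for the smallness of $\eta\cP$, then convert analytic dependence on $a$ into tangential analyticity via Remark~\ref{rmk:phi-diffeomor}. All the key steps match the paper's argument, including the identification of $\ww_a=\vv\circ\Phi_a-\vv$ by uniqueness and the role of $\kp\ge 2$ in ensuring the drift coefficients $2(\kp-1)$, $2\kp$ are positive.
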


To close the argument, we prove the following lemma to show the norm of $\eta\cP$ can be assumed small enough.

\begin{lemma}\label{lem-norm-operator-eta-P}
Assume that $\vv$ satisfies \eqref{v-W3p}, then the norm of operator 
$$\eta\cP:(W^{3,p}_*(\bR^n_+))^m\rightarrow (W^{1,p}(\bR^n_+))^m$$ 
for $p>n$ satisfies
\[
\norm{\eta\cP}\leq C\e_0,
\] 
where constant $C$ depends only on $m, n, p$ and $\kp$.
\end{lemma}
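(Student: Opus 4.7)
The plan rests on two structural facts: (i) for $p>n$, $W^{1,p}(B_1^+)$ is a Banach algebra, so $\|fg\|_{W^{1,p}}\le C\|f\|_{W^{1,p}}\|g\|_{W^{1,p}}$; and (ii) since $\eta$ is a smooth cutoff supported in $B_1^+$, multiplication by $\eta$ is a bounded operator $W^{1,p}(B_1^+)\to W^{1,p}(\bR^n_+)$. It therefore suffices to estimate $\|\cP\ww\|_{W^{1,p}(B_1^+)}$.

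I would write every term in $\cP^1\ww$ and $\cP^j\ww$ in the schematic form $A\cdot D^\beta w$, where $A$ is a coefficient built from $a_\ell,b,c$ and their partial derivatives in the $v$-variables, possibly multiplied by a weighted derivative of $\vv$ such as $z_n D^2 v^1$, $z_n D^2 v^j$ or $\nabla v^j$, while $D^\beta w$ is a derivative of order at most two of a component of $\ww$, possibly multiplied by $z_n$. The proof then reduces to showing: (a) $\|A\|_{W^{1,p}(B_1^+)}\le C\e_0$ for every such coefficient; and (b) $\|D^\beta w\|_{W^{1,p}(B_1^+)}\le C\|\ww\|_{W^{3,p}_*}$ for every $\ww$-factor. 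The algebra property then yields the bound term by term.

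For (a), the crucial observation is that $a_\ell$, $c$, and the partial derivatives $\partial_{v^1_i}b$, $\partial_{v^s}b$, $\partial_{v^s}c$ all vanish at $(\nabla v^1,\vv')=0$ and depend analytically on their arguments. Proposition~\ref{prop:v-W3p} combined with the Sobolev embedding $W^{1,p}\hookrightarrow L^\infty$ (valid for $p>n$) yields $\|\nabla v^1\|_{L^\infty}, \|\vv'\|_{L^\infty}, \|\nabla\vv'\|_{L^\infty}\le C\e_0$ and $\|D^2 v^1\|_{L^p}, \|\nabla\vv'\|_{L^p}, \|z_n D^3\vv\|_{L^p}\le C\e_0$. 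The same ingredients give $\|z_n D^2 v^j\|_{W^{1,p}}\le C\e_0$ via the identity $\nabla(z_n D^2 v^j)=D^2 v^j\cdot e_n+z_n D^3 v^j$, and together with the weighted bound $\|z_n D^2 v^1\|_{L^\infty}\le\e_0$ from \eqref{assumption-v-small} this delivers $W^{1,p}(B_1^+)$-norm $\le C\e_0$ for each of these coefficients. Factors such as $\partial_{v^1_i}a_\ell$ that do not vanish at the origin are merely bounded in $W^{1,p}$, but in every occurrence they multiply another factor of size $\e_0$, so the product remains $O(\e_0)$.

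For (b), $\partial_\ell w$ and $\partial_n w$ are in $W^{1,p}(B_1^+)$ directly from $\nabla\ww\in W^{1,p}$; for $z_n\partial_{\ell n}w$ the $L^p$-norm is bounded by $\|D^2 w\|_{L^p}$ and its gradient splits as $\partial_{\ell n}w+z_n D^3 w$, both in $L^p$ by definition of $W^{3,p}_*$; finally $\|w^j\|_{L^p(B_1^+)}$ is controlled via Poincar\'e using the normalization $\ww(0)=0$ built into the quotient space. The only genuine subtlety is that $D^2\vv$ lies in $L^p$ but not $L^\infty$, so whenever it multiplies a $D^\beta w$ factor one must either absorb $D^2\vv$ into an $L^p$ factor (exploiting $\nabla w\in L^\infty$ from Sobolev) or rely on the weighted bound $\|z_n D^2 v\|_{L^\infty}\le\e_0$ from \eqref{assumption-v-small}. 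The space $W^{3,p}_*$ is designed precisely so that these matchings close, and the main obstacle is therefore careful term-by-term bookkeeping rather than any conceptual difficulty.
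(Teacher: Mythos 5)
Your proposal is correct and follows essentially the same strategy as the paper's proof: estimate $\norm{\cP\ww}_{W^{1,p}(B_1^+)}$ term by term, using the vanishing of $a_\ell$, $c$, $\partial_{v^1_i}b$, $\partial_{v^j}b$, $\partial_{v^s}c$ at the origin, the smallness $\norm{\vv}_{W^{3,p}_*}\le C\e_0$ from \eqref{v-W3p}, the Sobolev embedding $W^{1,p}\hookrightarrow L^\infty$ for $p>n$, and Poincar\'e with the normalization $\ww(0)=0$ to control $\norm{w^s}_{L^p}$. Your appeal to the Banach algebra property of $W^{1,p}(B_1^+)$ is a clean reformulation of the paper's more explicit bookkeeping (the coefficients $I_1,\dots,I_5$), and your identification of the main subtlety — matching the $L^p$-only second derivatives of $\vv$ against $L^\infty$-factors or using the weighted bound $\norm{z_n D^2\vv}_{L^\infty}\le\e_0$ — is exactly the point the paper addresses via the $I_4$, $I_5$ terms.
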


\begin{proof}
Based on the definition of the operator $\cP^1$ and the assumption of small values for the coefficients $a_\ell$, $b$, and $c$, we can conclude that 
\begin{align*}
\norm{\cP^1 \ww}_{L^p(B_1^+)}\leq 
C\e_0\left( \norm{w^1}_{W^{2,p}_*(B_1^+)} + \norm{\ww'}_{L^p(B_1^+)}  \right).
\end{align*}
Also,
\begin{align*}
\norm{\nabla (\cP^1 \ww)}_{L^p(B_1^+)}\leq &
I_1\times  \norm{w^1}_{W^{3,p}_*(B_1^+)} + I_2 \times \norm{\nabla\ww'}_{L^{p}(B_1^+)}  \\
&+ I_3 \times \norm{w^1}_{W^{2,p}(B_1^+)} + I_4 \times \norm{\nabla w^1}_{L^\infty(B_1^+)} + I_5 \times \norm{\ww'}_{L^\infty(B_1^+)} .
\end{align*}
where
\begin{align*}
I_1=&\sum_{\ell}\norm{a_{\ell}}_{L^\infty(B_1^+)}+\sum_{i,\ell}\norm{z_n \partial_{\ell n}v^1\partial_{v_i^1}a_\ell}_{L^\infty(B_1^+)}
+\sum_{i}\norm{ \partial_{v_i^1}b}_{L^\infty(B_1^+)}  \\
I_2=& \sum_{j}\norm{ \partial_{v^j}b}_{L^\infty(B_1^+)},\\
I_3=&\sum_{\ell}\norm{\nabla(z_n a_{\ell })}_{L^\infty(B_1^+)},\\
I_4=&\sum_{i,\ell}\norm{\nabla(z_n \partial_{\ell n}v^1\partial_{v_i^1}a_\ell)}_{L^p(B_1^+)}+\sum_{i}\norm{\nabla( \partial_{v_i^1}b)}_{L^p(B_1^+)}\\
I_5=& \sum_{j}\norm{ \nabla(\partial_{v^j}b)}_{L^p(B_1^+)}.
\end{align*}
According to \eqref{v-W3p}, all terms $I_1, \dots , I_5$ are comparable to $\e_0$, this along with inequalities 
$\norm{\nabla w^1}_{L^\infty(B_1^+)}\leq C\norm{w^1}_{W^{2,p}(B_1^+)}$ and $\norm{\ww'}_{L^\infty(B_1^+)}\leq C\norm{\ww'}_{W^{1,p}(B_1^+)}$ we infer that 
\begin{align*}
\norm{\eta\cP^1 \ww}_{W^{1,p}(\bR^n_+)}\leq & C\norm{\cP^1 \ww}_{W^{1,p}(B_1^+)}\leq C\e_0\left( \norm{w^1}_{W^{3,p}_*(B_1^+)} + \norm{\ww'}_{W^{1,p}(B_1^+)}  \right) \\
\leq & C\e_0\norm{\ww}_{W^{3,p}_*(B_1^+)}.
\end{align*}
We argue the same way for operators $\cP^j$, $j\ge2$.
\end{proof}


\subsection{Analyticity of the solution}
In previous subsection we proved partial analyticity for the Legendre function $\vv$. Indeed, we showed that in a neighborhood of $y=0$, the function $y'\mapsto \vv(y',y_n)$ is analytic in tangential directions. 
Although this is sufficient for proving the analyticity of the regular part of free boundary, a natural question is whether a higher degree of regularity can also be obtained for $\vv$ in the vertical directions $y_n$.
This property is established by virtue of the following proposition.
For the sake of convenience, we assume from now on that the result of previous subsection, namely the analyticity of $\vv(y',y_n)$ in tangential directions,  holds in $B_1^+$.

\begin{proposition}\label{prop:estimate-vertical-derivative}
If $\e_0$ in \eqref{assumption-v-small} is small enough, then there exists universal constants $R_0, C_0>0$ such that for any multi-index $\mu=(\mu_1,\cdots,\mu_n)$ with $|\mu|=k$
\[
\norm{\partial^\mu \vv}_{L^\infty(B_{1}^+)}\leq C_0R_0^{-k}k!.
\]
\end{proposition}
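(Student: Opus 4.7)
The plan is to bootstrap the tangential analyticity of $\vv$ established in Proposition~\ref{prop:tangential-analyticity} to full analyticity, using the degenerate system \eqref{degenerate-eq} to trade each additional vertical derivative for tangential ones plus lower-order vertical derivatives. The induction is on the vertical order $\mu_n$ (with the tangential order $|\mu'| = k - \mu_n$ treated simultaneously), and the base case comes from the tangential analyticity result combined with the regularity gained from $W^{3,p}_*$.

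First I would extract from Proposition~\ref{prop:tangential-analyticity} quantitative Cauchy-type estimates of the form
\[
\|\partial^{\mu'} \vv(\cdot, y_n)\|_{W^{3,p}_*(B_{3/4}^+)} \leq C\, R^{-|\mu'|}\, |\mu'|!
\]
for all purely tangential multi-indices $\mu'$. Since $p > n$, Sobolev embedding turns this into $L^\infty$ bounds for $\partial^{\mu'}\vv$, $\partial^{\mu'}\partial_n\vv$, and weighted bounds $\|y_n \partial^{\mu'} D^2\vv\|_{L^\infty}$ of the same factorial form. This handles the base case $\mu_n \in \{0,1\}$ directly and provides the right weighted control on $\partial^{\mu'}\partial_{nn} v^1$.

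Next I would rearrange the first equation of \eqref{degenerate-eq}, isolating the highest vertical derivative:
\[
y_n(1+a_n)\,\partial_{nn} v^1 \;=\; b - 2(\kappa-1)\partial_n v^1 - y_n \Delta_{y'} v^1 - y_n\sum_{\ell<n} a_\ell\,\partial_{\ell n} v^1,
\]
with an analogous identity for $v^j$, $j>1$. By \eqref{assumption-v-small} the factor $1+a_n$ is bounded away from zero, and at $y_n = 0$ the compatibility condition $b(y',0) = 2(\kappa-1)\partial_n v^1(y',0)$ is automatic from evaluating the equation on the boundary. Applying $\partial^{\mu'}\partial_n^{\mu_n-1}$ to this identity and using the commutator $[\partial_n^{\,k}, y_n] = k\,\partial_n^{\,k-1}$, I would solve for $\partial^{\mu'}\partial_n^{\mu_n+1} v^1$: the coefficient picked up on the left-hand side combines to $(\mu_n-1)(1+a_n) + 2(\kappa-1) \approx \mu_n + \kappa - 2$, hence is invertible uniformly in $\mu_n$. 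Thus each application of the equation buys one additional vertical derivative in exchange for a linear-in-$\mu_n$ factor, at the cost of also producing tangential derivatives and lower-order vertical derivatives to which the induction hypothesis applies.

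Finally I would close the induction by combinatorial Cauchy estimates. To estimate the right-hand side after $\partial^{\mu'}\partial_n^{\mu_n-1}$, one expands via the Leibniz rule and, for $a_\ell(\nabla v^1)$, $b(\nabla v^1, \vv')$, and $c(\nabla v^1, \vv')$, via the Faà di Bruno formula, each time invoking the induction hypothesis on the factors with fewer total derivatives. Using the standard identity $\sum_{\sigma \leq \mu}\binom{\mu}{\sigma}\sigma!(\mu-\sigma)! = (|\mu|+1)\mu!$ and its multilinear analogues, one collects all contributions into a bound of the form $C_0\, R_0^{-k}\, k!$, provided $R_0$ is chosen small (relative to the tangential radius $R$, the constants in the $W^{3,p}_*$ theory, and $\kappa$) and $\e_0$ is small enough that composition with the analytic coefficient functions converges. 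The main obstacle I expect is precisely this combinatorial bookkeeping: the linear-in-$\mu_n$ coefficient picked up at each step of the bootstrap, combined with the multilinear factors from the nonlinearities, must be absorbed into the choice of $R_0$ in such a way that the constants do not blow up with $k$; this is delicate because the coefficients $a_\ell, b, c$ involve $\nabla v^1$ (for which we are simultaneously proving analyticity), so the Faà di Bruno sum must be organized so that only strictly lower-order derivatives of $v^1$ appear in each term fed into the induction.
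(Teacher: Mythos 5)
Your overall strategy — bootstrapping the tangential analyticity of Proposition~\ref{prop:tangential-analyticity} to full analyticity by using the equation to trade one vertical derivative at a time, with the $\mu_n + \gamma$ coefficient providing the uniform invertibility — is in the same spirit as the paper. But two steps where you are vague or imprecise are precisely where the paper has to introduce a specific device, and without these the argument does not close.

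First, the step ``solve for $\partial^{\mu'}\partial_n^{\mu_n+1}v^1$'' cannot be done algebraically: after applying $\partial^{\mu'}\partial_n^{\mu_n-1}$, the top-order term still carries the factor $y_n$, so no pointwise division gives you $\partial_n^{\mu_n+1}v^1$. What one actually obtains, by integrating the degenerate ODE $y_n\partial_n^2 w + (\mu_n+\gamma)\partial_n w = f$ in $y_n$ (Lemma~\ref{estimate-normal-derivative}), is uniform control of $\partial_n^{\mu_n}v^1$ and of the weighted quantity $y_n\partial_n^{\mu_n+1}v^1$, not of $\partial_n^{\mu_n+1}v^1$ itself. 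The off-by-one in your description hides the fact that the induction quantity must carry both $\partial^\mu v$ and $y_n\partial_n\partial^\mu v$ simultaneously; otherwise the right-hand side, which contains $y_n\Delta' v$ and $y_n a_\ell\partial_{\ell n}v$, cannot be estimated from the inductive hypothesis.

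Second, and more substantively, you name the main obstacle (``the linear-in-$\mu_n$ coefficient \ldots combined with the multilinear factors \ldots must be absorbed into the choice of $R_0$'') but defer its resolution to unspecified Fa\`a di Bruno bookkeeping. The paper's proof hinges on an organizing device that you have not supplied: the norm
$\norm{u}_{s,R,r}=\sum_{|\mu|+\mu_n\le s}\frac{R^{|\mu|}r^{\mu_n}}{|\mu|!}\bigl(\norm{y_n\partial_n\partial^\mu u}_\infty+(\mu_n+1)\norm{\partial^\mu u}_\infty\bigr)$,
with two separate radii $R$ (tangential) and $r$ (vertical), together with the weight-$2$ counting of vertical derivatives ($|\mu|+\mu_n=\mu_1+\cdots+\mu_{n-1}+2\mu_n\le s$) and the majorization lemma (Lemma~\ref{lem:s-norm-estimate}) for compositions $f(u,\nabla u)$. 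The weight-$2$ count is what makes the two extra tangential derivatives produced by $\Delta' v$ land exactly at level $s-1$ rather than overshooting, and the extra small parameter $r$ (chosen last, depending on $R$, $\e_0$, and the coefficients) is what absorbs the growth from the nonlinear composition and from the $(\mu_n+1)$ factors. Your induction is on $\mu_n$ alone, and while the combinatorics might eventually be made to work with care, the proposal as written does not contain the ingredient that actually makes the constants close; this is the heart of the proof.
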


Before proving this proposition, we explain how this can be used to show the analyticity of solution, the second part of Theorem \ref{main-result}.
If we consider the Taylor expansion for $v^j$ at $y=0$, for every $0\leq j\leq m$ the remainder term vanishes provided $|y|\leq R_0/ne$, because
\begin{align*}
\Big|v^j(y)-\sum_{k=0}^{N-1}\sum_{|\mu|=k}\frac{\partial^\mu v^j(0)}{\mu!}y^\mu\Big|
=\Big|\sum_{|\mu|=N} \frac{\partial^\mu v^j(y^*)}{\mu!}y^\mu\Big|
 \leq \frac{C_0}{N}(\frac{n|y|}{R_0})^N.
\end{align*}
Then $\vv$ is analytic in a neighborhood of $y=0$, and 
 the inverse of partial Hodograph-Legendre transform defined in Section \ref{Section:Hodograph}, $T^{-1}$, is analytic. 
 The analytic version of inverse function theorem implies that $T$ and so $(u^1)^{1/\kp}$ is an analytic function.
Thus  $u^j=u^1v^j/\alpha$ is multiplication of $u^1$ and the  analytic function  $v^j$. 
Hence,  $\uu/|\uu|$ and $ |\uu|^{\frac1{\kappa}}  $ have  the analytic  extension  from $\{|\uu|>0\}$ to $ \overline{\{|\uu|>0\} }$ near $x=0$.

\medskip

The following lemma is crucial for  proof of Proposition \ref{prop:estimate-vertical-derivative}.
\begin{lemma}\label{estimate-normal-derivative}
Suppose $u$ satisfies the ordinary differential equation
\[
x_n\partial_n^2 u + \gamma \partial_n u =f,
\]
for some $\gamma>1$ and $f\in L^\infty(0,1)$.
If $x_n\partial_n u \in L^\infty(0,1)$, then $ u\in W_*^{2,\infty}(0,1)$ and
\[\begin{split}
\norm{\partial_n u}_{L^\infty}&\leq \frac1\gamma\norm{f}_{L^\infty},\\
\norm{x_n\partial_n^2 u}_{L^\infty}&\leq 2\norm{f}_{L^\infty}.
\end{split}\]
\end{lemma}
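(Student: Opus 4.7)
The plan is to reduce the ODE to a first-order linear equation by setting $v=\partial_n u$, so that the equation becomes $x_n v' + \gamma v = f$ on $(0,1)$. Multiplying by the integrating factor $x_n^{\gamma-1}$ converts this into $(x_n^\gamma v)' = x_n^{\gamma-1} f$, which I integrate from $0$ to $x_n$ to obtain
\[
x_n^\gamma v(x_n) \;=\; \int_0^{x_n} t^{\gamma-1} f(t)\,dt \;+\; C
\]
for some constant $C$.

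The key step is to show that $C=0$. The hypothesis $x_n\partial_n u\in L^\infty(0,1)$ says $x_n v$ is bounded. Dividing the identity above by $x_n^{\gamma-1}$,
\[
x_n v(x_n) \;=\; x_n^{1-\gamma}\int_0^{x_n} t^{\gamma-1} f(t)\,dt \;+\; C\,x_n^{1-\gamma}.
\]
The first term on the right is bounded by $\|f\|_{L^\infty} x_n/\gamma$, hence $x_n v\in L^\infty$ forces $C x_n^{1-\gamma}$ to stay bounded near $0$. Since $\gamma>1$, the exponent $1-\gamma$ is negative, so necessarily $C=0$. This is the only nontrivial point; everything else is an explicit estimate.

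With $C=0$, the pointwise bound
\[
|v(x_n)| \;\le\; x_n^{-\gamma}\int_0^{x_n} t^{\gamma-1}\|f\|_{L^\infty}\,dt \;=\; \frac{\|f\|_{L^\infty}}{\gamma}
\]
gives the first inequality $\|\partial_n u\|_{L^\infty}\le \gamma^{-1}\|f\|_{L^\infty}$. The second estimate follows immediately by rearranging the original ODE, $x_n \partial_n^2 u = f - \gamma \partial_n u$, and applying the triangle inequality together with the just-proved bound on $\partial_n u$, yielding $\|x_n \partial_n^2 u\|_{L^\infty}\le \|f\|_{L^\infty} + \gamma\cdot \gamma^{-1}\|f\|_{L^\infty} = 2\|f\|_{L^\infty}$. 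Together these show $u\in W^{2,\infty}_*(0,1)$ with the stated bounds.
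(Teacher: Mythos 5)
Your proof is correct and follows essentially the same route as the paper's: rewrite the ODE via the integrating factor $x_n^{\gamma-1}$ as $\partial_n(x_n^\gamma\,\partial_n u)=x_n^{\gamma-1}f$, integrate, and use $\gamma>1$ together with $x_n\partial_n u\in L^\infty$ to conclude that the integration constant vanishes (equivalently, that $x_n^\gamma\partial_n u\to 0$ as $x_n\to 0$), after which both estimates follow directly. You are a little more explicit than the paper about why the constant must be zero, which is a nice touch, but it is the same argument.
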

\begin{proof}
Rewrite the equation as  $\partial_n(x_n^\gamma \partial_n u)=x_n^{\gamma-1}f$ and integrate to get
\[
\partial_n u(x) = x_n^{-\gamma}\int_0^{x_n}t^{\gamma-1}f(x',t)dt.
\]
Since $\gamma >1$ and $x_n\partial_n u \in L^\infty$, we will have $\lim_{x_n\ra0}x_n^\gamma\partial_n u=0$.
Thus
\[
\norm{\partial_n u}_{L^\infty}\leq x_n^{-\gamma}\int_0^{x_n}t^{\gamma-1}\norm{f}_{L^\infty}dt=\frac1\gamma \norm{f}_{L^\infty},
\]
and 
\[
\norm{x_n\partial_n^2 u}_{L^\infty}\leq \norm{f}_{L^\infty} + \gamma\norm{\partial_n u}_{L^\infty}\leq 2\norm{f}_{L^\infty(B_1^+)}. \qedhere
\]
\end{proof}

\begin{remark}\label{remark:ODE}
This lemma shows that operator $x_n\partial_n^2 + \gamma\partial_n: W_*^{2,\infty}(0,1) \longrightarrow L^\infty(0,1)$ is invertible. 
If we consider a perturbation operator $x_np(x)\partial_n^2 + \gamma q(x)\partial_n$ where $p, q\approx 1$, it will be also invertible.
\end{remark}

We prove Proposition \ref{prop:estimate-vertical-derivative} in a general setting when $\vv=(v^1,\dots,v^m)$ is a solution of following system
\begin{equation}\label{analytic-system}
y_n\partial_n^2 v^j + \gamma_j \partial_n v^j = y_n \sum_{k,\ell}A_{k\ell}^j(\nabla \vv) \partial_{k\ell} v^j+ g^j(\vv, \nabla \vv),\qquad j=1,\dots, m,
\end{equation}
where $\gamma_j>1$, and the  coefficients $A_{k\ell}^j$ and $g^j$ are analytic functions and satisfy
\begin{equation}\label{analytic-system-conditions}
A_{nn}^j(0)=0, 
\quad  \partial_{v_n^i}g^j(0,0)=0,
\end{equation}
where $\partial_{v_n^i}$ stands for the derivative  with respect to the variable $\partial_n v^i$.

Also, we have to assume that the solution $\vv$ satisfies  \eqref{v-W3p} as well as the following estimate holds for some positive constants $R$ and $C_1$
\begin{equation}\label{prop:analytic-base-induction}
\sum_{\mu_n=0}\frac{R^{|\mu|}}{|\mu|!}\left( \norm{y_n\partial_n\partial^\mu\nabla\vv}_{L^\infty(B_1^+)} + \norm{\partial^\mu\nabla\vv}_{L^\infty(B_1^+)} \right)\leq C_1\e_0,
\end{equation}
where the summation is on all multi-indices $\mu$ with $\mu_n=0$.
As a result of the following lemma, any solution to  \eqref{degenerate-eq} will meet the condition \eqref{prop:analytic-base-induction}.

\begin{lemma}
If $\e_0$ in \eqref{assumption-v-small} is small enough, then every solution of \eqref{degenerate-eq} satisfies \eqref{prop:analytic-base-induction} for some constants $R$ and $C_1$.
\end{lemma}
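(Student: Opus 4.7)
The plan is to convert the tangential analyticity of $\vv$ in $W^{3,p}_*$ (which is exactly Proposition \ref{prop:tangential-analyticity}, and which we are told holds throughout $B_1^+$) into the $L^\infty$-type analytic bound \eqref{prop:analytic-base-induction}. The bridge from $W^{3,p}_*$ to $L^\infty$ is supplied by a Sobolev embedding in the $y'$-direction and, for the $y_n\partial_n^2$ term, by the ODE estimate of Lemma \ref{estimate-normal-derivative} applied to the PDE \eqref{degenerate-eq}.

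First I would apply Cauchy's estimates to the Banach-valued analytic map $\mathfrak{B}\colon B_1'\to (W^{3,p}_*(\bR^n_+))^m$. Combined with the smallness assumption \eqref{v-W3p} and the fact that the analyticity is uniform on a neighborhood of $\overline{B_1^+}$ (by translation invariance of the implicit function theorem argument in Section \ref{subsection:proof of main result}), this yields constants $C_2, R>0$, depending only on $m, n, p, \kp$, such that for every tangential multi-index $\mu$ (i.e.\ $\mu_n=0$),
\[
\norm{\partial^\mu \vv}_{W^{3,p}_*(B_1^+)} \le C_2\,\e_0\,R^{-|\mu|}|\mu|!.
\]
Fixing $p>n$, the Sobolev embedding $W^{1,p}(B_1^+)\hookrightarrow L^\infty(B_1^+)$ then upgrades this to
\[
\norm{\partial^\mu \nabla \vv}_{L^\infty(B_1^+)} \le C_3\,\e_0\,R^{-|\mu|}|\mu|!,
\]
which already gives the second summand in \eqref{prop:analytic-base-induction} and, by reapplying the estimate to the still-tangential multi-index $\mu+e_i$ with $i<n$ and using $y_n\le 1$, also handles the tangential part of the first summand, namely $y_n\partial_n\partial^\mu\partial_i\vv$ for $i<n$.

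The remaining term to bound is $\norm{y_n\partial_n^2\partial^\mu \vv}_{L^\infty(B_1^+)}$. Since $\mu_n=0$, $\partial^\mu$ commutes with $y_n\partial_n^2$ and $\partial_n$, so applying $\partial^\mu$ to \eqref{degenerate-eq} and isolating the principal part gives, for each $j$, an Euler-type perturbed ODE in $y_n$:
\[
y_n\bigl(1-A^j_{nn}(\nabla \vv)\bigr)\,\partial_n^2\partial^\mu v^j + \gamma_j\,\partial_n\partial^\mu v^j = H^j_\mu,
\]
where $\gamma_j\in\{2(\kp-1),2\kp\}>1$, the leading coefficient $1-A^j_{nn}(\nabla\vv)$ is $\approx 1$ (because $A^j_{nn}(0)=0$ and $\|\nabla v^1\|_{L^\infty}$ is small), and $H^j_\mu$ is the Leibniz expansion of the analytic right-hand side. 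Crucially, every term in $H^j_\mu$ involves either (i)~a pure tangential derivative $\partial^\sigma \vv$ with $|\sigma|\le|\mu|+2$, or (ii)~a mixed second derivative $\partial_{k\ell}\partial^\sigma v^j$ with $(k,\ell)\neq(n,n)$, which is itself a first derivative of some $\partial^{\sigma+e_i}\vv$ with $\sigma+e_i$ still tangential. In both cases the $L^\infty$-norm is controlled by the analytic estimate of the previous paragraph (Step~2), and Remark \ref{remark:ODE} then yields
\[
\norm{\partial_n\partial^\mu v^j}_{L^\infty(B_1^+)} + \norm{y_n\partial_n^2\partial^\mu v^j}_{L^\infty(B_1^+)} \le C_4\,\e_0\,R^{-|\mu|}|\mu|!.
\]
Summing the resulting bounds with the weights $R^{|\mu|}/|\mu|!$ produces a series dominated (after polynomial bookkeeping for the number of tangential multi-indices of each order) by a geometric one, so by shrinking $R$ slightly the estimate \eqref{prop:analytic-base-induction} follows with a universal $C_1$.

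The main obstacle is the analytic estimation of $\|H^j_\mu\|_{L^\infty}$: the coefficients $A^j_{k\ell}(\nabla\vv)$, $g^j(\vv,\nabla\vv)$, $b$, $c$ are analytic nonlinearities, and the Leibniz expansion of $\partial^\mu$ applied to such composites couples binomial coefficients $\binom{\mu}{\sigma}$ with products of analytically-growing factors $R^{-|\sigma|}|\sigma|!$. Verifying that this composition preserves the growth $R^{-|\mu|}|\mu|!$ (up to at most a polynomial-in-$|\mu|$ prefactor that can be absorbed by slightly shrinking $R$) is a standard but careful Faà di Bruno–type computation, relying on the factorial identity $\sum_{|\sigma|=s}\binom{\mu}{\sigma}=\binom{|\mu|}{s}$ recorded in the notation section.
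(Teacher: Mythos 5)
Your outline is right: the only input is the Banach-valued tangential analyticity of Proposition \ref{prop:tangential-analyticity}, and the work is converting it to the weighted $L^\infty$-series \eqref{prop:analytic-base-induction}. The first half of your argument --- Cauchy/Gevrey bounds on $\mathfrak{B}$, then Sobolev $W^{1,p}\hookrightarrow L^\infty$ for $p>n$, and the shift $\mu\mapsto\mu+e_i$, $i<n$, for the tangential part of the first summand --- matches the paper in spirit, although the paper instead writes $\norm{\partial^\mu\partial_i\vv}_{L^\infty}\le\norm{\partial^\mu\partial_i\vv(\cdot,0)}_{L^\infty(B_1')}+\norm{\partial_n\partial^{\mu}\partial_i\vv}_{L^\infty}$ via a one-dimensional integration in $y_n$ and the separate trace series \eqref{series-v0}. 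Both variants work.

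Where your proposal departs, and where it has a genuine gap, is the term $\norm{y_n\partial_n^2\partial^\mu\vv}_{L^\infty}$. Going back to the PDE here is both unnecessary and, as written, incorrect. Unnecessary: for $p>n$ on the bounded domain $B_1^+$,
\[
\norm{y_n D^2 w}_{L^\infty(B_1^+)} \le C\norm{y_n D^2 w}_{W^{1,p}(B_1^+)} \le C\bigl(\norm{D^2 w}_{L^p}+\norm{y_n D^3 w}_{L^p}\bigr)\le C\norm{w}_{W^{3,p}_*(B_1^+)},
\]
so $W^{3,p}_*(B_1^+)\hookrightarrow W^{2,\infty}_*(B_1^+)$ and your $W^{3,p}_*$ bound on $\partial^\mu\vv$ already controls $y_n\partial_n^2\partial^\mu\vv$; this is precisely how the paper reads it off, since $\mathfrak{B}(b)$ is a one-variable function in $y_n$ whose $W^{2,\infty}_*$ norm \emph{is} $\norm{\partial_n\cdot}_{L^\infty}+\norm{y_n\partial_n^2\cdot}_{L^\infty}$. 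Incorrect: your claim that every term of $H^j_\mu$ is either a pure tangential derivative of $\vv$ or a mixed $\partial_{k\ell}\partial^\sigma v^j$ with $(k,\ell)\ne(n,n)$ is false. Expanding $\partial^\mu\bigl(y_n A^j_{nn}(\nabla\vv)\partial_{nn}v^j\bigr)$ by Leibniz, only the $\sigma=0$ term passes to the left-hand side, and the remaining pieces
\[
\sum_{0<\sigma\le\mu}\permut\mu\sigma\,\partial^\sigma A^j_{nn}\cdot y_n\partial_{nn}\partial^{\mu-\sigma}v^j
\]
carry the very factor $y_n\partial_{nn}\partial^{\mu-\sigma}v^j$ you are estimating, at lower tangential order. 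Closing that loop requires an induction on $|\mu|$ that you never set up --- and which the paper deliberately postpones to Proposition \ref{prop:analyticity-general-case}, where it is actually needed.

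One smaller issue with constants: the Cauchy estimate does not give an $\e_0$-small prefactor for all $\mu$, because the radius $\epsilon$ of the IFT ball $\cB_\epsilon$ is a fixed universal constant, not $O(\e_0)$; only the $\mu=0$ term inherits $O(\e_0)$ smallness from \eqref{v-W3p}. The paper recovers the overall $C_1\e_0$ bound by choosing $R=R_2\e_0$, so that each $|\mu|\ge 1$ term in the series acquires the factor $\e_0^{|\mu|}\le\e_0$. Shrinking $R$ ``slightly'' to handle the polynomial count of tangential multi-indices, as you propose, does not by itself yield the $\e_0$-smallness of the sum.
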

\begin{proof}
We recall the result of the previous subsection, Proposition \ref{prop:tangential-analyticity},  that $b\mapsto \vv(b,y_n)$ is the analytic map $\mathfrak{B}:B_1'\subset\bR^{n-1}\mapsto W^{3,p}_*(B_1^+)\subset W_*^{2,\infty}(B_1^+)$. 
Then $\mathfrak{B}$ belongs to Gevrey class $G^1$ and there exists constants $\bar C_1$ and $R_1$ such that
\begin{equation}\label{series-B}
\sum_{\mu_n=0}\frac{R_1^{|\mu|}}{|\mu|!}\norm{\partial^\mu_b\mathfrak{B}}_{L^\infty(B_1')}\leq  \bar C_1,
\end{equation}
as well as
\begin{equation}\label{series-v0}
\sum_{\mu_n=0}\frac{R_1^{|\mu|}}{|\mu|!}\norm{\partial^\mu\vv(\cdot,0)}_{L^\infty(B_1')}\leq  \bar C_1,
\end{equation}
where the summations are on all multi-indices $\mu$ with $\mu_n=0$.
Note that for every $b$, the image of $\mathfrak{B}(b)$ is a one-dimensional function in $W^{2,\infty}_*(B_1^+)$ which depends only on $y_n$. 
Thus
\[
\norm{\partial^\mu_b\mathfrak{B}}_{L^\infty(B_1')}\cong
\norm{\partial_n\partial^\mu\vv}_{L^\infty(B_1^+)} + \norm{y_n\partial_n^2\partial^\mu\vv}_{L^\infty(B_1^+)},
\]
and so
\begin{equation*}
\sum_{\mu_n=0}\frac{R_1^{|\mu|}}{|\mu|!}\left( \norm{y_n\partial_n\partial^\mu(\partial_n\vv)}_{L^\infty(B_1^+)} + \norm{\partial^\mu(\partial_n\vv)}_{L^\infty(B_1^+)} \right)\leq \bar C_1.
\end{equation*}
On the other hand, for $i<n$
\[
\norm{\partial^\mu(\partial_i\vv)}_{L^\infty(B_1^+)} \le \norm{\partial^\mu(\partial_i\vv)(\cdot,0)}_{L^\infty(B_1')} + \norm{\partial_n\partial^\mu(\partial_i\vv)}_{L^\infty(B_1^+)},
\]
and 
\[
\norm{y_n\partial_n\partial^\mu(\partial_i\vv)}_{L^\infty(B_1^+)} \le \norm{\partial_n\partial^\mu\partial_i\vv}_{L^\infty(B_1^+)} 
\le \norm{\partial^\mu_b\partial^{i}_b\mathfrak{B}}_{L^\infty(B_1')}.
\]
All these together with \eqref{series-B} and  \eqref{series-v0} shows that 
\[
\sum_{\substack{\mu_n=0 }}\frac{R_2^{|\mu|}}{|\mu|!}\left( \norm{y_n\partial_n\partial^\mu\nabla\vv}_{L^\infty(B_1^+)} + \norm{\partial^\mu\nabla\vv}_{L^\infty(B_1^+)} \right)\leq \tilde C_1,
\]
for some $R_2<R_1$ and constant $\tilde C_1$. 
Notice that the first term in summation \eqref{prop:analytic-base-induction}, when $\mu=0$, is controlled by $C\e_0$ due to \eqref{v-W3p}.
Then  the estimate    \eqref{prop:analytic-base-induction} will be obtained by constants $C_1= C+  \tilde C_1$ and  $R= R_2\e_0$. 
\end{proof}

Before proceeding, we introduce the following notation
\[
\norm{u}_{s,R,r}:=\sum_{ |\mu|+\mu_n\leq s}\frac{R^{|\mu|}r^{\mu_n}}{|\mu|!}\left(\norm{y_n\partial_n\partial^\mu  u}_{L^\infty(B_1^+)}+(\mu_n+1)\norm{\partial^\mu u}_{L^\infty(B_1^+)}\right)
\]
where the summation is on all multi-indices $\mu$ with $ |\mu|+\mu_n=\mu_1+\cdots+\mu_{n-1}+2\mu_n\leq s$.
The following lemma is crucial for our result.

\begin{lemma}\label{lem:s-norm-estimate}
Assume that $f(s,p):\bR\times\bR^n\ra\bR$ is analytic at $(s,p)=(0,0)$ and its Taylor series is majorized by the geometric series
\[
f(s,p) \ll \frac{Mr_0}{r_0-(s+p_1+\cdots+p_n)},
\] 
for some constants $M$ and $r_0$.
Then for any $u\in W^{s+1,\infty}_*(B_1^+)$ such that $\norm{u}_{W^{1,\infty}(B_1^+)} \le r_0/2$, we have
\[
\norm{f(u,\nabla u)}_{s,R,r} \le \frac{Mr_0}{r_0-\left(\norm{u}_{s,R,r} + \sum_i\norm{\partial_i u}_{s,R,r}\right)}.
\]
\end{lemma}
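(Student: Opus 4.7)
My plan is to treat this as a classical Cauchy--Kowalevski style majorant estimate. The main idea is that $\norm{\cdot}_{s,R,r}$ behaves like a Banach algebra norm under pointwise multiplication, after which the composition with the analytic function $f$ is dominated by the majorant series evaluated at the norms of $u$ and $\partial_i u$.

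\textbf{Step 1: algebra property.} I would first establish
\[
\norm{uv}_{s,R,r} \le \norm{u}_{s,R,r}\,\norm{v}_{s,R,r}.
\]
Applying Leibniz, $\partial^\mu(uv)=\sum_{\sigma\le\mu}\permut{\mu}{\sigma}\partial^\sigma u\,\partial^{\mu-\sigma}v$, and since $y_n$ is just a coordinate, $y_n\partial_n\partial^\mu(uv)$ similarly splits as a Leibniz sum in which the weight $y_n\partial_n$ lands on either the $u$-factor or the $v$-factor. Three elementary facts then combine: the combinatorial estimate $\permut{\mu}{\sigma}|\sigma|!\,|\mu-\sigma|!/|\mu|!\le 1$; the supermultiplicativity $(\mu_n+1)\le (\sigma_n+1)((\mu-\sigma)_n+1)$, which redistributes the pointwise weight; and the exact factoring $R^{|\mu|}r^{\mu_n}=R^{|\sigma|}r^{\sigma_n}\cdot R^{|\mu-\sigma|}r^{(\mu-\sigma)_n}$. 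The constraint $|\mu|+\mu_n\le s$ is automatically inherited by both $\sigma$ and $\mu-\sigma$ since they are componentwise dominated by $\mu$, so after re-indexing the double sum is bounded by $\norm{u}_{s,R,r}\norm{v}_{s,R,r}$.

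\textbf{Step 2: composition.} Iterating the algebra property yields for any multi-index $\alpha=(\alpha_0,\ldots,\alpha_n)\in\mathbb{Z}_+^{n+1}$
\[
\norm{u^{\alpha_0}(\partial_1u)^{\alpha_1}\cdots(\partial_nu)^{\alpha_n}}_{s,R,r}\le \norm{u}_{s,R,r}^{\alpha_0}\prod_{i=1}^n\norm{\partial_iu}_{s,R,r}^{\alpha_i}.
\]
The majorization hypothesis $f(s,p)\ll Mr_0/(r_0-s-p_1-\cdots-p_n)$ is equivalent to the coefficient bound $|\partial^\alpha f(0)/\alpha!|\le M|\alpha|!/(\alpha!\,r_0^{|\alpha|})$. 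Expanding $f(u,\nabla u)$ in its Taylor series around $0$, which converges pointwise because the condition $\norm{u}_{W^{1,\infty}(B_1^+)}\le r_0/2$ keeps $(u(y),\nabla u(y))$ well inside the polydisc of analyticity, and applying the algebra bound term by term, the multinomial theorem produces exactly $Mr_0/(r_0-\norm{u}_{s,R,r}-\sum_i\norm{\partial_iu}_{s,R,r})$ whenever this quantity is positive; if it is not, the right-hand side is read as $+\infty$ and the claim is vacuous.

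\textbf{Main obstacle.} The delicate point is Step 1. Three features of the norm---the degenerate weight $y_n\partial_n$, the multiplicative factor $(\mu_n+1)$, and the asymmetric constraint $|\mu|+\mu_n\le s$ that effectively counts normal derivatives twice---must interact consistently through the Leibniz rule. The two crucial observations are, first, the algebraic inequality $(\mu_n+1)\le(\sigma_n+1)((\mu-\sigma)_n+1)$, and second, that the $y_n$ in $y_n\partial_n\partial^\mu$ depends only on position and can therefore be freely assigned to either side of a Leibniz split. Once these are verified, the remainder of the argument is standard combinatorial bookkeeping along the lines of the classical Cauchy--Kowalevski majorant method.
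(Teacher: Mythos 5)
Your proposal is correct and follows essentially the same route as the paper: both rest on expanding $f$ in its Taylor series, pushing $\partial^\mu$ through the monomials via the generalized Leibniz rule, and using the three combinatorial facts you isolate (the multinomial bound $\binom{\mu}{\sigma}\,|\sigma|!\,|\mu-\sigma|!/|\mu|!\le 1$, the supermultiplicativity $(\mu_n+1)\le(\sigma_n+1)((\mu-\sigma)_n+1)$, and the multiplicativity of $R^{|\mu|}r^{\mu_n}$ together with the inheritance of the constraint $|\sigma|+\sigma_n\le s$ under $\sigma\le\mu$) to bound the result by the product of the $\|\cdot\|_{s,R,r}$ norms. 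The only organizational difference is that you first prove the binary Banach-algebra inequality and iterate, whereas the paper applies the $k$-fold Leibniz formula directly; the underlying combinatorics are identical, and your version has the merit of making those estimates explicit rather than leaving them implicit in one displayed inequality.
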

\begin{proof}
Consider  the Taylor expansion of $f$
\[
f(s,p)= \sum_{\alpha=(\alpha_0,\dots,\alpha_n)} F_\alpha \, s^{\alpha_0}p_1^{\alpha_1}\cdots p_n^{\alpha_n}.
\]
Hence,
\[\begin{split}
&\norm{f(u,\nabla u)}_{s,R,r} =\sum_{ |\mu|+\mu_n\leq s }\frac{R^{|\mu|}r^{\mu_n}}{|\mu|!}
\left( \norm{y_n\partial_n \partial^{\mu}f(u,\nabla u)}_\infty +(\mu_n+1) \norm{ \partial^{\mu}f(u,\nabla u)}_\infty \right)\\
&\le \sum_{\alpha}|F_\alpha|\sum_{ |\mu|+\mu_n\leq s }\frac{R^{|\mu|}r^{\mu_n}}{|\mu|!} 
\big(\norm{y_n\partial_n  \partial^{\mu} \left( u^{\alpha_0}(\partial_1 u)^{\alpha_1}\cdots(\partial_n u)^{\alpha_n}\right)}_\infty \\
& \hspace{5cm} + (\mu_n+1)\norm{  \partial^{\mu}\left( u^{\alpha_0}(\partial_1 u)^{\alpha_1}\cdots(\partial_n u)^{\alpha_n}\right)}_\infty \big) \\
& = \sum_{\alpha}|F_\alpha|\sum_{ |\mu|+\mu_n\leq s }\sum_{\mu=\sigma_1+\cdots+\sigma_{|\alpha|}} \frac{R^{|\mu|}r^{\mu_n}}{|\mu|!} \times \frac{\mu!}{\sigma_1!\cdots\sigma_{|\alpha|}!}\times\\
 &\hspace{2cm}
 \Big(\norm{ y_n\partial_n \big(\partial^{\sigma_1}u\cdots \partial^{\sigma_{\alpha_0}}u\partial^{\sigma_{\alpha_0+1}}(\partial_1 u)\cdots \partial^{\sigma_{|\alpha|}}(\partial_nu)\big)}_\infty \\
 &\hspace{2cm} + (\mu_n+1) \norm{ \partial^{\sigma_1}u\cdots \partial^{\sigma_{\alpha_0}}u\partial^{\sigma_{\alpha_0+1}}(\partial_1 u)\cdots \partial^{\sigma_{|\alpha|}}(\partial_nu)}_\infty \Big)\\
&  \le  \sum_{\alpha}|F_\alpha| \left( \sum_{ |\mu|+\mu_n\leq s } \frac{R^{|\mu|}r^{\mu_n}}{|\mu|!}( \norm{y_n\partial_n\partial^\mu u}_\infty +(\mu_n+1) \norm{\partial^\mu u}_\infty)\right)^{\alpha_0} \times \cdots \times  \\
&\hspace{2cm}  \left( \sum_{ |\mu|+\mu_n\leq s } \frac{R^{|\mu|}r^{\mu_n}}{|\mu|!} (\norm{y_n\partial_n\partial^\mu (\partial_n u)}_\infty+(\mu_n+1)\norm{\partial^\mu (\partial_n u)}_\infty)\right)^{\alpha_n}\\
&=\sum_{\alpha}|F_\alpha| \norm{u}_{s,R,r}^{\alpha_0} \cdots \norm{\partial_n u}_{s,R,r}^{\alpha_n} 
\le  \frac{Mr_0}{r_0-\left(\norm{u}_{s,R,r} + \sum_i\norm{\partial_i u}_{s,R,r}\right)}.
\end{split} \] 
\end{proof}

The main result of this subsection is established by virtue of the following proposition. 
Indeed, by the constants $R$ and $r$ that will obtained in the following  we let $R_0=rR$ to prove  Proposition \ref{prop:estimate-vertical-derivative}.

\begin{proposition}\label{prop:analyticity-general-case}
Assume that $\vv$ is a solution of \eqref{analytic-system} which satisfies \eqref{v-W3p} and \eqref{prop:analytic-base-induction}. 
Under assumption \eqref{analytic-system-conditions},  there are universal constants $0<C_0, r, R<1$ such that for all $s\in \bN$,
\begin{equation}\label{prop:estimate-derivative}
\norm{\nabla \vv}_{s,R,r} \le C_0\e_0,
\end{equation}
if $\e_0$  is small enough.
\end{proposition}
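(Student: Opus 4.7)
\bigskip

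\textbf{Proof proposal for Proposition 5.12.}

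The plan is to prove \eqref{prop:estimate-derivative} by induction on $s$, estimating separately the tangential and mixed derivatives. For each fixed $s$ we split the sum defining $\|\nabla\vv\|_{s,R,r}$ into two parts: the contribution of multi-indices with $\mu_n=0$, and the rest. The first part is controlled directly by hypothesis \eqref{prop:analytic-base-induction}, provided the chosen $R$ is not larger than the $R$ appearing there. The second part, involving at least one normal derivative, is where the equation \eqref{analytic-system} enters through the ODE structure in $y_n$.

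For the inductive step, fix a multi-index $\mu$ with $\mu_n\ge1$ and write $\mu=\mu'+e_n$. Applying $\partial^{\mu'}$ to the $j$-th equation of \eqref{analytic-system} produces, after collecting the highest-order normal derivatives on the left-hand side, an ODE of the form
\[
y_n\partial_n^2(\partial^{\mu'}v^j)+\gamma_j\,\partial_n(\partial^{\mu'}v^j)=\cR_{\mu'}^j,
\]
where $\cR_{\mu'}^j$ is a sum of Leibniz-type expressions. I would isolate from $\cR_{\mu'}^j$ two structurally dangerous pieces, namely the terms $y_nA_{nn}^j(\nabla\vv)\partial_n^2(\partial^{\mu'}v^j)$ and $\partial_{v_n^i}g^j(\vv,\nabla\vv)\,\partial_n(\partial^{\mu'}v^i)$, and move them to the left. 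By condition \eqref{analytic-system-conditions} both coefficients vanish at the origin and are small in $L^\infty$ by \eqref{v-W3p}; hence Remark \ref{remark:ODE} guarantees the perturbed ODE operator is still invertible in $W^{2,\infty}_*$ with a bound close to that of Lemma \ref{estimate-normal-derivative}. Applying this inverse converts the estimate for $\partial^\mu \nabla v^j$ into an estimate for the remaining pieces of $\cR_{\mu'}^j$, which only contain derivatives of $\vv$ of combined order $|\mu'|+\mu'_n\le s-1$ (since each additional normal derivative that appears is compensated by a factor of $y_n$, absorbed by the weight).

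The heart of the argument is then to bound these remainders in the weighted norm. Here I would invoke Lemma \ref{lem:s-norm-estimate} to estimate the analytic compositions $A_{k\ell}^j(\nabla\vv)$ and $g^j(\vv,\nabla\vv)$ by $\|\nabla\vv\|_{s-1,R,r}$, and the usual algebra-of-majorants combinatorics (the multinomial identity $\sum_{\sigma\le\mu}\binom{\mu}{\sigma}=\text{const}$ used through the definition of $\|\cdot\|_{s,R,r}$) to reassemble Leibniz products. Summing over $\mu$ with the factor $R^{|\mu|}r^{\mu_n}/|\mu|!$ and using the $2\mu_n$ weighting in the constraint $|\mu|+\mu_n\le s$ (which provides one extra factor of $r$ for every normal derivative, matching the ODE reduction that exchanges one $\partial_n^2$ for $y_n^{-1}$ times lower order), I expect to obtain an inequality of the form
\[
\|\nabla\vv\|_{s,R,r}\le C_1\e_0+C_2\,r\,\Psi\!\bigl(\|\nabla\vv\|_{s-1,R,r}\bigr)+C_3\,R\,\|\nabla\vv\|_{s,R,r},
\]
where $\Psi$ is an analytic function vanishing at $0$. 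Choosing first $R$ small to absorb the last term into the left, then $r$ small so that the middle term is bounded by $\tfrac12 C_0\e_0$ whenever the induction hypothesis gives $\|\nabla\vv\|_{s-1,R,r}\le C_0\e_0$, closes the induction.

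The main obstacle will be the bookkeeping in the passage from $\mu$ to $\mu'=\mu-e_n$: one must track carefully how the weights $R^{|\mu|}r^{\mu_n}/|\mu|!$ on the left compare to the weights on terms appearing in $\cR_{\mu'}^j$ after the ODE inversion, using that applying Lemma \ref{estimate-normal-derivative} gains a factor comparable to $1/(\mu_n+1)$ (because $\gamma_j>1$ is effectively increased to $\gamma_j+2\mu_n$ after $\mu_n$ tangential-preserving differentiations), which is exactly the factor $(\mu_n+1)$ built into the $\|\cdot\|_{s,R,r}$-norm. Verifying that these two mechanisms match so the scheme closes for all $s$ uniformly is the delicate point; once this is in place the analyticity estimate of Proposition \ref{prop:estimate-vertical-derivative} follows by setting $R_0=rR$ and extracting $C_0R_0^{-k}k!$ bounds from the definition of $\|\cdot\|_{s,R,r}$.
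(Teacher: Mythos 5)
Your proposal follows essentially the same route as the paper's proof: induction on $s$, differentiating the system to obtain a one-dimensional ODE in $y_n$ with the drift coefficient shifted upward, moving the structurally dangerous pieces ($y_nA^j_{nn}\partial_n^2$ and $\partial_{v^i_n}g^j\,\partial_n$) to the left so that Remark~\ref{remark:ODE} applies, estimating the remaining source terms with Lemma~\ref{lem:s-norm-estimate} and Leibniz combinatorics, and then closing the induction by choosing $r$ and $\e_0$ small. Two small imprecisions worth fixing: differentiating the equation by $\partial^\mu$ increases $\gamma_j$ to $\gamma_j+\mu_n$ (one commutator per normal derivative), not $\gamma_j+2\mu_n$ — the doubling of $\mu_n$ is only in the \emph{weight constraint} $|\mu|+\mu_n\le s$, not in the drift coefficient; and what the paper actually applies the ODE to is $\hat v^j=\partial^\mu v^j$ (full multi-index), reading off the $(\mu_n+1)$-weighted terms of $\norm{\partial_n v^j}_{s,R,r}$ directly from Lemma~\ref{estimate-normal-derivative}, rather than passing to $\mu'=\mu-e_n$. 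These do not change the outcome, but if you make them explicit the bookkeeping of how the ODE bound produces precisely the factor $(\mu_n+1)$ appearing in $\norm{\cdot}_{s,R,r}$, and how replacing $\mu$ by the reduced index costs two units in $|\mu|+\mu_n$ but gains one power of $r$, becomes transparent — which is exactly the delicate matching you correctly identified as the crux.
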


\begin{proof} We split the proof into several steps.

\medskip
\noindent
{\bf Step 1:} 
We are going to show that we can choose $r$ small enough such that  \eqref{prop:estimate-derivative} holds for constant $R$, defined in  \eqref{prop:analytic-base-induction}, and constant $C_0$, that will be determined later.  
The proof is by induction on $s$. 
The base case,  $s=1$, will be obtained by \eqref{prop:analytic-base-induction}.
Notice that for $s=1$ the multi-index can be $\partial^\mu=\partial_i$ for $1\le i<n$.

Now assume that  \eqref{prop:estimate-derivative} holds for  $s-1$. 
We will prove the desired estimate for $s$.
Consider a multi-index $\mu$ with $|\mu|+\mu_n\le s$. 
Differentiate  equation \eqref{analytic-system} with respect to $\partial^{\mu}$,
\[\begin{split}
y_n\partial_n^2 \hat v^j+(\mu_n+\gamma_j)\partial_n\hat v^j=f_\mu^1+f_\mu^2
\end{split}\]
where $\hat v^j=\partial^{\mu}  v^j$ and
\[\begin{split}
f_\mu^1:=&A^j_{nn}(y_n\partial_n^2 \hat v^j+\mu_n\partial_n\hat v^j) + \sum_{i=1}^m\left(\sum_{k,\ell} y_n\partial_{k\ell} v^j \partial_{v_n^i}A^j_{k\ell}  +
 \partial_{v^i_n}g^j\right)\partial_n\hat v^i\\
f_\mu^2:= &\sum_{k+\ell<2n}A^j_{k\ell}\partial^\mu\left( y_n\partial_{k\ell} v^j\right)
+\sum_{k,\ell}\sum\limits_{0<\tau<\mu }\permut\mu \tau \partial^{\tau}A^j_{k\ell}\partial^{\mu-\tau}(y_n\partial_{k\ell} v^j) \\
&+\sum_{k,\ell} \left(y_n\partial_{k\ell} v^j \right) \left(\partial^{\mu}A^j_{k\ell}- \sum_{i=1}^m \partial_{v_n^i}A^j_{k\ell} \partial_n\hat v^i \right)  
+\left( \partial^\mu g^j - \sum_{i=1}^m \partial_{v^i_n}g^j\partial_n\hat v^i\right) 
\end{split}\]
The induction hypothesis implies that $\hat v^j\in W_*^{1,\infty}(B_1^+)$ and we are able to apply Lemma \ref{estimate-normal-derivative}.
Although we already do not know that the right hand side is  bounded (particularly $f_\mu^1$), we consider the equation as a perturbed form and apply Lemma \ref{estimate-normal-derivative} (see Remark \ref{remark:ODE}).
Indeed, we solve the system of ODEs 
$$y_np^j(y)\partial_{n}^2\hat v^j+ \sum_{i=1}^m  q^j_i(y) \partial_{n}\hat v^i = f_\mu^2,$$ 
where
\[
p^j(y)= 1- A^j_{nn}, \qquad q^j_i(y)= (\mu_n+\gamma_j-\mu_nA^j_{nn})\delta_{ij} -  \sum_{k,\ell}y_n\partial_{k\ell} v^j \partial_{v_n^i}A^j_{k\ell}  
- \partial_{v^i_n}g^j.
\]
By \eqref{analytic-system-conditions} and \eqref{v-W3p}, we know that $p^j\approx 1$ and $q^j_i\approx (\mu_n + \gamma_j) \delta_{ij}$.
By induction hypothesis, we know that all  terms in form of $y_n\partial_n \partial^\tau\nabla  v^j$ and $\partial^\tau\nabla v^j$ are bounded when  $|\tau|+\tau_n\leq s-1$. So, $f_\mu^2$ is bounded (we also prove this fact in more details in next steps) which necessitates that  $\hat v^j\in W_*^{2,\infty}(B_1^+)$ by applying Lemma \ref{estimate-normal-derivative}. 
Moreover, the following estimate holds for some constant $C$,
\begin{equation*}
\norm{y_n\partial_n^2 \hat v^j}_\infty+(\mu_n+1)\norm{\partial_n\hat v^j}_\infty \leq C\norm{f_\mu^2}_\infty.
\end{equation*}
Summing  this inequality over $\mu$ with $|\mu| + \mu_n\leq s$  implies that
\[
\norm{\partial_n v^j}_{s,R,r}
\leq  C \sum_{\substack{|\mu|+\mu_n\leq s}}\frac{R^{|\mu|}r^{\mu_n}}{|\mu|!}\norm{f_\mu^2}_\infty.
\]

\medskip
\noindent
{\bf Step 2:}
Consider a multi-index $\mu$ with $\mu_n\ge 1$ and let $\partial^\mu=\partial_n\partial^\sigma$.
Differentiate  equation \eqref{analytic-system} with respect to $\partial^{\tau}=\partial^{\sigma}\partial_k$ for some $k=1,\dots, n-1$,
and repeat the argument in the previous step. We get
\[
\norm{y_n\partial_n\partial^\mu(\partial_k v^j)}_\infty+(\mu_n+1)\norm{\partial^\mu(\partial_k v^j)}_\infty \leq C\norm{f_{\tau}^2}_\infty.
\]
Summing over over $\mu$ with $|\mu| + \mu_n\leq s$ and $\mu_n\ge 1$ and recall \eqref{prop:analytic-base-induction} when $\mu_n=0$, 
we obtain that 
\[
\norm{\partial_k v^j}_{s,R,r}
\leq C_1\e_0 + C \sum_{\substack{|\tau|+\tau_n\leq s \\ 1\le \tau_k}}\frac{R^{|\tau|}r^{\tau_n}}{|\tau|!}\norm{f_\tau^2}_\infty.
\]
Thus
\begin{equation}\label{prop:estimate-mu_n2}
\norm{\nabla v^j}_{s,R,r}
\leq nC_1\e_0 + Cn \sum_{ |\mu|+\mu_n\leq s }\frac{R^{|\mu|}r^{\mu_n}}{|\mu|!}\norm{f_\mu^2}_\infty.
\end{equation}

\medskip
\noindent
{\bf Step 3:}
In following steps we want to estimate $\norm{f_\mu^2}_\infty$. 
We start with $A^j_{k\ell}\partial^\mu\left( y_n\partial_{k\ell} v^j\right)$ when $k+\ell<2n$ and $ |\mu|+\mu_n\leq s$.
Term $A^j_{k\ell}$ is uniformly bounded due to \eqref{v-W3p}. 
If $\mu_n\ge 1$ and $\partial^\mu=\partial^\sigma\partial_n$, we must find an estimate for  
\[
\partial^\mu\left( y_n\partial_{k\ell} v^j\right) = y_n\partial_n\partial^{\tau}(\partial_\ell v^1) + \mu_n\partial^\tau(\partial_\ell v^1)
\]
 where $\partial^\tau = \partial_{k}\partial^{\sigma}$ (we can assume that $k<n$) with the further property $|\tau|+\tau_n \leq |\sigma|+1+\sigma_n\leq s-1$. So, we are able to control them by induction assumption. 
 If $\mu_n=0$, we control these terms by \eqref{prop:analytic-base-induction}.
Thus
\[\begin{split}
\sum_{\substack{|\mu|+\mu_n\leq s }}\sum_{k+\ell<2n}&\frac{R^{|\mu|}r^{\mu_n}}{|\mu|!}\norm{\partial^\mu\left( y_n\partial_{k\ell} v^j\right)}_\infty \leq 2nC_1\e_0+ \\
\sum_{\substack{|\tau|+\tau_n\leq s-1 }}&\sum_{k+\ell<2n}\frac{R^{|\tau|}r^{\tau_n+1}}{|\tau|!}\left(\norm{y_n\partial_n\partial^{\tau}(\partial_\ell v^1) }_\infty +\mu_n \norm{\partial^\tau(\partial_\ell v^1)}_\infty \right)\\
&\leq  n^2C_1\e_0 + n^2r\norm{\nabla v^j}_{s-1,R,r} \leq n^2(C_1 + rC_0)\e_0.
\end{split}\]

\medskip
\noindent
{\bf Step 4:}
The second term in $f_\mu^2$ can be controlled as follows
\[\begin{split}
&\sum_{ |\mu|+\mu_n\leq s }\frac{R^{|\mu|}r^{\mu_n}}{|\mu|!}\sum\limits_{0<\tau<\mu }\permut\mu \tau
\norm{ \partial^{\tau}A^j_{k\ell}}_\infty \norm{\partial^{\mu-\tau}(y_n\partial_{k\ell} v^j)}_\infty \\
&\le  \left( \sum_{|\mu|+\mu_n\leq s-1 }\frac{R^{|\mu|}r^{\mu_n}}{|\mu|!}\norm{ \partial^{\mu}A^j_{k\ell}}_\infty\right)
\left(\sum_{ |\mu|+\mu_n\leq s-1 } \frac{R^{|\mu|}r^{\mu_n}}{|\mu|!}\norm{\partial^{\mu}(y_n\partial_{k\ell} v^j)}_\infty\right)\\
&=: I^1_{k\ell} \times I^2_{k\ell}.
\end{split}\]
The estimate of $I^2_{k\ell}$ when $k+\ell<2n$ has been done in Step 3. 
When $k=\ell=n$, the estimate is the same but we shall notice that  the summation here is on multi-indices $\mu$ with $|\mu|+\mu_n\le s-1$.
Thus
\[\begin{split}
I^2_{k \ell} \le &(C_1 + rC_0)\e_0, \quad\text{ when } k+\ell<2n,\\
I^2_{nn} \le  &\norm{\partial_n v^j}_{s-1,R,r}\le C_0\e_0.
\end{split}\]
In order to estimate $I^1_{k\ell}$, we apply Lemma \ref{lem:s-norm-estimate}.
We may assume that the Taylor expansion of $A^j_{k\ell}$ is majorized by the geometric series
\[
A^j_{k\ell}(\nabla \vv) \ll \frac{M R_*}{R_*-(\sum_{i,l} \partial_l v^i)},
\]
for some universal constants $M$ and $R_*$.
Furthermore,  assumption \eqref{analytic-system-conditions} for $A^j_{nn}$ yields that we may assume 
\[
A^j_{nn}(\nabla \vv) \ll 
\frac{M R_*\sum_{i,l} \partial_l v^i}{R_*-(\sum_{i,l} \partial_l v^i)}.
\]
Hence, when $k+\ell<2n$
\[\begin{split}
I^1_{k\ell}=\sum_{ |\mu|+\mu_n\leq s-1 }\frac{R^{|\mu|}r^{\mu_n}}{|\mu|!}\norm{ \partial^{\mu}A^j_{k\ell}}_\infty
 \le  \frac{M R_*}{R_*-(\sum_{i,l} \norm{\partial_l v^i}_{s-1,R,r})}\le \frac{M R_*}{R_*-mnC_0\e_0}.
\end{split}\]
Also, we have
\[
I^1_{nn} \leq \frac{M R_*\sum_{i,l} \norm{\partial_l v^i}_{s-1,R,r}}{R_*-(\sum_{i,l} \norm{\partial_l v^i}_{s-1,R,r})} 
\le \frac{MR_*mnC_0\e_0}{R_*-mnC_0\e_0} .
\]
All in all, the estimate of the second term of $f_\mu^2$ will be
\[
K(C_1+rC_0+ C_0^2\e_0)\e_0,
\]
for some universal constant $K$.

\medskip
\noindent
{\bf Step 5:}
Here we estimate the third term in $f_\mu^2$. 
Assume that $\partial^\mu=\partial^\sigma\partial_t$, then
\[\begin{split}
\partial^\mu A^j_{k\ell}=\sum_{i,l}\partial^\sigma\left( \partial_{v^i_l}A^j_{k\ell}\partial_{lt}v^i\right)
=&\sum_{i,l}\sum_{0\le \tau\le\sigma}\permut\sigma\tau\partial^\tau( \partial_{v^i_l}A^j_{k\ell})\partial^{\mu-\tau}\partial_{l}v^i.
\end{split}\] 
Hence,
\[\begin{split}
\partial^{\mu}A^j_{k\ell}- \sum_{i=1}^m \partial_{v_n^i}A^j_{k\ell} \partial_n\hat v^i=&
\sum_{i,l}\sum_{0< \tau\le\sigma}\permut\sigma\tau\partial^\tau( \partial_{v^i_l}A^j_{k\ell})\partial^{\mu-\tau}\partial_{l}v^i \\
&+\sum_i\sum_{l\ne n}  \partial_{v^i_l}A^j_{k\ell}\partial^{\mu}\partial_{l}v^i =: I_1^\mu + I_2^\mu.
\end{split}\]

Term $y_n\partial_{k\ell} v^j$ is comparable to $\e_0$ due to \eqref{prop:analytic-base-induction}. Therefore this term will be controlled by 
\[
C\e_0\sum_{ |\mu|+\mu_n\leq s }\frac{R^{|\mu|}r^{\mu_n}}{|\mu|!} \norm{I_1^\mu+I_2^\mu}_\infty.
\]
We can control term $I_1^\mu$ by
\[\begin{split}
&\sum_{\substack{|\mu|+\mu_n\leq s }}\frac{R^{|\mu|}r^{\mu_n}}{|\mu|!} \norm{I_1^\mu}_\infty \\
&\le R\sum_{i,l} \left(\sum_{\substack{|\sigma|+\sigma_n\leq s-1 }}\frac{R^{|\sigma|}r^{\sigma_n}}{|\sigma|!} \norm{\partial^\sigma \partial_{v^i_l}A^j_{k\ell}}_\infty\right) 
\left( \sum_{\substack{|\sigma|+\sigma_n\leq s-1 }}\frac{R^{|\sigma|}r^{\sigma_n}}{|\sigma|!} \norm{y_n\partial_n \partial^\sigma(\partial_l v^i)}_\infty\right) 
\\ &\leq \frac{mnMR_*RC_0\e_0}{R_*-(\sum_{i,l} \norm{\partial_l v^i}_{s-1,R,r})},
\end{split}\]
where we have assumed that 
\[
\partial_{v^i_l}A^j_{k\ell}\ll \frac{M R_*}{R_*-(\sum_{i,l} \partial_l v^i)},
\]
with the same universal constants $M$ and $R_*$.

In order to estimate $I_2^\mu$, note that $\partial_{v^i_l}A^j_{k\ell}$ is bounded and then
\[
\sum_{\substack{|\mu|+\mu_n\leq s }}\frac{R^{|\mu|}r^{\mu_n}}{|\mu|!} \norm{I_1^\mu}_\infty  
\leq C\sum_i\sum_{l\ne n} \sum_{\substack{|\mu|+\mu_n\leq s }}\frac{R^{|\mu|}r^{\mu_n}}{|\mu|!} \norm{\partial^{\mu}\partial_{l}v^i}_\infty .
\]
We distinguish two different cases $\mu_n=0$ and $\mu_n\ge 1$. 
When $\mu_n=0$, we are able to control this summation by \eqref{prop:analytic-base-induction}.
For the case $\mu_n\ge 1$ ($\partial^\mu=\partial^\sigma\partial_n$), we write  
$\partial^{\mu}\partial_{l}v^i= \partial^\tau(\partial_nv^i)$ where $\partial^\tau=\partial^\sigma\partial_{l}$. Then
\[
\sum_{\substack{|\mu|+\mu_n\leq s }}\frac{R^{|\mu|}r^{\mu_n}}{|\mu|!} \norm{I_1^\mu}_\infty  
\leq Cr\sum_i \sum_{\substack{|\tau|+\tau_n\leq s-1 }}\frac{R^{|\tau|}r^{\tau_n}}{|\tau|!} \norm{\partial^{\tau}\partial_{n}v^i}_\infty \le CnC_0r\e_0.
\]

Considering all calculations, we obtain the estimate
\[
\frac{mnMR_*RC_0\e_0^2}{R_*-mnC_0\e_0} + (CnC_0r+C_1)\e_0^2 
\]
for the third term of $f^2_\mu$.

\medskip
\noindent
{\bf Step 6:}
In this step, we are going to estimate the last term in $f_\mu^2$,
\[\begin{split}
\partial^\mu g^j - \sum_{i=1}^m \partial_{v^i_n}g^j\partial_n\hat v^i = &
\sum_{ i} \partial^\sigma(\partial_{v^i}g^j \partial_n v^i) + \sum_{\substack{i,k \\ k<n}} \partial^\sigma(\partial_{v^i_k}g^j\partial_{nk}v^i)\\
&+ \sum_i \sum_{0<\tau\le \sigma}\permut\sigma\tau \partial^\tau(\partial_{v^i_n}g^j)\partial^{\sigma-\tau}(\partial_{nn}v^i),
\end{split}\]
where we have assumed    $\mu_n\ge 1$ and  $\partial^\mu=\partial^\sigma\partial_n$. 
\[\begin{split}
&\sum_{\substack{|\mu|+\mu_n\leq s \\ 1\le \mu_n}}\frac{R^{|\mu|}r^{\mu_n}}{|\mu|!} \norm{\partial^\mu g^j - \sum_{i=1}^m \partial_{v^i_n}g^j\partial_n\hat v^i}_\infty 
\\ &\le Rr\sum_i\left( \sum_{|\sigma|+\sigma_n \le s-2} \frac{R^{|\sigma|}r^{\sigma_n}}{|\sigma|!} \norm{\partial^\sigma (\partial_{v^i}g^j )}_\infty \right)
\left( \sum_{|\sigma|+\sigma_n \le s-2} \frac{R^{|\sigma|}r^{\sigma_n}}{|\sigma|!} \norm{\partial^\sigma \partial_n v^i}_\infty\right)
\\ & +r\sum_{\substack{i,k \\ k<n}}\left( \sum_{|\sigma|+\sigma_n \le s-2} \frac{R^{|\sigma|}r^{\sigma_n}}{|\sigma|!} \norm{\partial^\sigma (\partial_{v^i_k}g^j )}_\infty \right)
\left( \sum_{|\sigma|+\sigma_n \le s-1} \frac{R^{|\sigma|}r^{\sigma_n}}{|\sigma|!} \norm{\partial^\sigma \partial_n v^i}_\infty\right)\\
&+\sum_i\left( \sum_{|\sigma|+\sigma_n \le s-2} \frac{R^{|\sigma|}r^{\sigma_n}}{|\sigma|!} \norm{\partial^\sigma (\partial_{v^i_n}g^j )}_\infty \right)\left( \sum_{|\sigma|+\sigma_n \le s-1} \frac{R^{|\sigma|}r^{\sigma_n}}{|\sigma|!} \norm{\partial^\sigma \partial_n v^i}_\infty\right)
\end{split}\]
By \eqref{analytic-system-conditions} we can assume that 
\[
\partial_{v^i_k}g^j \text{ and } \partial_{v^i}g^j \ll \frac{M R_*}{R_*-(\sum_i v^i+\sum_{i,l} \partial_l v^i)}, \quad k<n
\]
and when $k=n$,
\[
\partial_{v^i_n}g^j\ll \frac{M R_*(\sum_i v^i+\sum_{i,l} \partial_l v^i)}{R_*-(\sum_i v^i+\sum_{i,l} \partial_l v^i)}.
\]
According to Lemma \ref{lem:s-norm-estimate}, we must first estimate $\norm{v^i}_{R,r,s}$,
\[
\norm{v^i}_{R,r,s} = \sum_{|\mu|+\mu_n\leq s }\frac{R^{|\mu|}r^{\mu_n}}{|\mu|!} \norm{\partial^\mu v^i}_\infty  \le
\e_0 +\sum_{k} \sum_{\substack{|\mu|+\mu_n\leq s \\ 1\le \mu_k}} \frac{R^{|\mu|}r^{\mu_n}}{|\mu|!} \norm{\partial^\sigma(\partial_k v^i)}_\infty
\]
where $\partial^\mu=\partial^\sigma\partial_k$, then
\[
\norm{v^i}_{R,r,s} \le \e_0 +\sum_{k} R \norm{\partial_k v^i}_{R,r,s-1} \le (1+ RC_0)\e_0.
\]
Putting all these calculations together  we can deduce  the following estimate
\[\begin{split}
\sum_{\substack{|\mu|+\mu_n\leq s \\ 1\le \mu_n}}&\frac{R^{|\mu|}r^{\mu_n}}{|\mu|!} \norm{\partial^\mu g^j - \sum_{i=1}^m \partial_{v^i_n}g^j\partial_n\hat v^i}_\infty \le  
K(r+\e_0)C_0\e_0,
\end{split}\]
for some universal constant $K$.

Next, repeat the calculation when $\mu_n=0$ and recall  \eqref{prop:analytic-base-induction} to get
\[\begin{split}
\sum_{\substack{|\mu|+\mu_n\leq s \\  \mu_n=0}}&\frac{R^{|\mu|}r^{\mu_n}}{|\mu|!} \norm{\partial^\mu g^j - \sum_{i=1}^m \partial_{v^i_n}g^j\partial_n\hat v^i}_\infty \le
 \tilde K(r+\e_0)C_1\e_0.
\end{split}\]

\medskip
\noindent
{\bf Step 7:}
Put all the estimate of $f_\sigma^2$ in \eqref{prop:estimate-mu_n2}  to get 
\[\begin{split}
\norm{\nabla \vv}_{s,R,r}\leq (K_1+K_2C_0(r+\e_0+C_0\e_0) ) \e_0
\end{split}\]
where constants $K_1$ and $K_2$ are universal and depends only on $m, n, M$ and $C_1$. 
Suppose now  that $C_0= 2K_1$ and select both
 $r$ and $\e_0$ small enough such that 
 $$K_2(r+\e_0) +2K_1K_2\e_0 \le 1/2,$$
to arrive at 
 $\norm{\nabla \vv}_{s,R,r}\leq C_0\e_0$.
\end{proof}

\subsection{Existence of  solutions} 
In this part we will prove Theorem \ref{thm:existence}. 
Indeed, it is a straightforward conclusion of a  Cauchy-Kowalevski type statement in the following proposition. 

\begin{proposition}
Consider the nonlinear degenerate elliptic system \eqref{analytic-system} with property \eqref{analytic-system-conditions}.
There exists $\e_0$ such that if 
the analytic Cauchy data $\vv_0:\bR^{n-1}\ra\bR^m$ satisfies
\begin{equation}\label{cauchy-data-assumption}
|\vv_0(0)|\le \e_0, \quad
|\nabla' \vv_0(0)|\le \e_0,
\end{equation}
then \eqref{analytic-system} has a unique  analytic local solution $\vv$ with $\vv(y',0)=\vv_0(y')$.
\end{proposition}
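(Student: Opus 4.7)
My plan is to reverse-engineer the analysis of the previous subsections. Given the analytic Cauchy data $\vv_0$ with smallness at the origin, I would first determine all normal derivatives $\partial_n^k \vv(\cdot,0)$ recursively from the equation, and then verify convergence of the resulting formal Taylor series via the weighted norm machinery of Proposition \ref{prop:analyticity-general-case}. Restricting \eqref{analytic-system} to $y_n=0$ annihilates both the principal $y_n\partial_n^2 v^j$ and the $y_n \sum A^j_{k\ell}\partial_{k\ell} v^j$ terms, reducing the equation to the purely algebraic relation
\[
\gamma_j\, \partial_n v^j(y',0) = g^j\bigl(\vv_0(y'), \nabla \vv(y', 0)\bigr),
\]
in which $\partial_n \vv(y',0)\in\bR^m$ is the only unknown. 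The Jacobian with respect to this unknown is $\mathrm{diag}(\gamma_j)-(\partial_{v^i_n} g^j)$, which by \eqref{analytic-system-conditions} and \eqref{cauchy-data-assumption} equals $\mathrm{diag}(\gamma_j)+O(\e_0)$, so the analytic implicit function theorem uniquely yields $\partial_n\vv(\cdot,0)$ as an analytic function near $y'=0$.

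For $k\ge 1$, differentiating \eqref{analytic-system} $k$ times in $y_n$ and setting $y_n=0$ gives a relation of the shape
\[
\bigl[(k+\gamma_j)\,\delta_{ij} + O(\e_0)\bigr]\, \partial_n^{k+1} v^i(y',0) = P^j_k(y'),
\]
where $P^j_k$ is an analytic expression in the already-determined normal jets $\{\partial_n^\ell \vv(\cdot,0)\}_{\ell \le k}$ and their tangential derivatives (the $O(\e_0)$ bracket coming again from $A^j_{nn}(0)=0$ and $\partial_{v^i_n} g^j(0,0)=0$). Since $k+\gamma_j>0$ for every $k\ge 0$, the coefficient matrix is invertible and the recursion produces each $\partial_n^{k+1}\vv(\cdot, 0)$ as an analytic function of $y'$ in a fixed neighborhood of the origin.

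To show convergence of the formal Taylor series $\vv(y) := \sum_k (y_n^k/k!)\,\partial_n^k \vv(y',0)$, I would verify the tangential analyticity bound \eqref{prop:analytic-base-induction} for the truncated polynomials — inherited directly from the analyticity of $\vv_0$ together with the analytic dependence in the recursion above — and then rerun the proof of Proposition \ref{prop:analyticity-general-case}, noting that the estimates carried out there are genuine a priori bounds that depend only on formal identities valid for any smooth enough partial solution of \eqref{analytic-system}. This yields $\|\nabla \vv\|_{s,R,r} \le C_0 \e_0$ uniformly in the truncation order $s$, which upon letting $s\to \infty$ gives absolute convergence of the series on a small polydisc. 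The limit is analytic and satisfies \eqref{analytic-system} together with the Cauchy condition order by order, hence genuinely by analyticity. Uniqueness is immediate: any other analytic solution sharing the datum $\vv_0$ is forced by the recursion to have the same Taylor jets and hence the same germ.

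The main obstacle, and the reason this is not an application of the classical Cauchy-Kowalevski theorem, is that $\{y_n=0\}$ is characteristic for \eqref{analytic-system}: the coefficient of $\partial_n^2 v^j$ vanishes there. Consequently only a single piece of Cauchy data suffices instead of the usual two, but the price is that $\partial_n \vv|_{y_n=0}$ must be extracted from the equation itself. The structural hypotheses \eqref{analytic-system-conditions} are exactly what furnishes the non-degeneracy needed for the implicit function theorem at each step of the recursion, and the weighted analytic norm $\|\cdot\|_{s,R,r}$ built earlier is exactly what captures the correct balance between the degenerate ODE operator and its lower-order perturbations required to close the convergence argument in the second step.
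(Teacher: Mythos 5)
Your proposal follows essentially the same route as the paper: determine $\partial_n\vv|_{y_n=0}$ from the restricted equation via the implicit function theorem, build the full normal jet recursively (with invertibility of the coefficient matrix $(\mu_n+\gamma_j)\delta_{ij}+O(\e_0)$ at each step guaranteed by \eqref{analytic-system-conditions}), and then close the majorant estimate with the weighted analytic norm $\|\cdot\|_{s,R,r}$, mirroring Steps~4--6 of Proposition~\ref{prop:analyticity-general-case}. One detail worth making explicit, which the paper handles carefully: since no genuine solution on a half-ball exists yet, the convergence argument cannot literally ``rerun'' Proposition~\ref{prop:analyticity-general-case} (whose norm involves $L^\infty(B_1^+)$ bounds on $y_n\partial_n\partial^\mu u$); instead one re-derives the same recursion estimates at the level of the Taylor coefficients $V^{jk}_\mu$ at the origin, using the modified norm $\sum_{|\mu|+\mu_n\le s}\frac{|V^{jk}_\mu|}{|\mu|!\,\mu_n!}R^{|\mu|}r^{\mu_n}$ (note the extra $\mu_n!$, absorbing the $y_n^{\mu_n}/\mu_n!$ factor of the formal series). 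This is the content of your ``truncated polynomial'' remark, and the rest of your argument — including the observation that $\{y_n=0\}$ is characteristic, so only one piece of Cauchy data is prescribed and $\partial_n\vv|_{y_n=0}$ is extracted from the equation — matches the paper's proof.
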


\begin{proof}
First, we show that we can obtain $\partial_n\vv$ uniquely at a neighborhood of  $y=0$. 
Consider  \eqref{analytic-system}  at $y_n=0$, we get
\[
\gamma_j\partial_n v^j =  g^j(\vv_0, \nabla' \vv_0,\partial_n\vv|_{y_n=0}).
\]
Apply the implicit function theorem with regard to condition \eqref{analytic-system-conditions} to determine the analytic funstion $\nabla\vv$ uniquely. 
Furthermore, it will satisfy
\[
|\nabla\vv(0)|\le c\e_0.
\]

Now we compute the Taylor expansion of $\nabla \vv$ at $y=0$. Substitute 
\[
\partial_k v^j(y)=\sum_{\mu}\frac{V^{jk}_\mu}{\mu!} \, y^\mu,
\]
in equation \eqref{analytic-system} and calculate the coefficients $V^{jk}_\mu$. 
For $\mu_n=0$, the coefficients will be obtained uniquely  from  the Cauchy data $\vv_0$ and $\partial_n\vv$. 

Suppose $V^{jk}_\mu$ is known  when $|\mu|+\mu_n \le s-1$ and $j=1,\dots m$, we want to calculate $V^{jk}_\mu$ for some multi-index $\mu$ with $|\mu|+\mu_n=s\ge 2$. 
If we differentiate \eqref{analytic-system} with respect to $\partial^{\mu}$ at $y=0$, we will have 
\begin{equation}\label{Taylor:system-Vj}
\begin{split}
(\mu_n+\gamma_j)V^{jn}_\mu = &
\mu_n \sum_{k,\ell}\partial^{\sigma}\left(A_{k\ell}^j(\nabla \vv) \partial_{k\ell} v^j\right)\Big|_{y=0}+ \partial^{\mu} g^j(\vv, \nabla \vv)\Big|_{y=0}\\
= &\mu_n A_{nn}^j(\nabla\vv(0))V^{jn}_\mu + \sum_{i=1}^m \partial_{v^i_n}g^j(\vv(0),\nabla\vv(0)) V^{in}_\mu + \text{ l.o.t}
\end{split}
\end{equation}
where l.o.t consists of $V^{ik}_{\tau} $ for  $ t\le s-1$, $1\le k\le n$, $1\le i\le m$ and multi-index $\tau$ whit $|\tau|+\tau_n\le s-1$. 
(We will see this fact later.)
In addition, we have used the notation $\partial^\mu=\partial^\sigma\partial_n$ when $\mu_n\ge 1$.

The property \eqref{analytic-system-conditions} along with condition \eqref{cauchy-data-assumption} (and also the first part of proof that $\nabla\vv(0)$ is small) implies that $A_{nn}^j(\nabla\vv(0))$ and $\partial_{v^i_n}g^j(\vv(0),\nabla\vv(0))$ can be small enough. Hence we can solve  system \eqref{Taylor:system-Vj} (for $j=1,\dots,m$) to find $V^{jn}_\mu$ uniquely. 
Now differentiate \eqref{analytic-system} with respect to $\partial^{\sigma}\partial_k$ (for $k<n$) at $y=0$ and repeat the above calculations, to  find out coefficients $V^{jk}_\mu$ uniquely.
Moreover, we conclude that 
\begin{equation}\label{Taylor:estimate-Vj}
|V^{jk}_\mu| \le  C\,|\text{l.o.t}|,\quad\text{ for }1\le j\le m\text{ and } 1\le k\le n,
\end{equation}
for some universal constant $C$.

So far, we have obtained the Taylor expansion of the solution uniquely. We must show the convergence of the series near $y=0$.
‌‌To do this, we follow the similar idea to what we did in the previous subsection. 
By virtue of analyticity of Cauchy data $\vv_0$ as well as $\partial_n\vv$, we can choose $R_1$ such that 
\[
\sum_{\mu_n=0}\frac{|V^{jk}_\mu|}{\mu!} R_1^{|\mu|} \le \tilde C_1<\infty, \quad\text{ for }1\le j\le m\text{ and } 1\le k\le n.
\]
Recall \eqref{cauchy-data-assumption} and set $R=R_1\e_0$, we get
\begin{equation}\label{taylor-expansion-estimate0}
\sum_{\mu_n=0}\frac{|V^{jk}_\mu|}{\mu!} R^{|\mu|} \le C_1\e_0, \quad\text{ for }1\le j\le m\text{ and } 1\le k\le n.
\end{equation}
We claim that there exists $0<r<1$ and $C_0$ such that
\begin{equation}\label{Taylor-convergence}
\norm{\partial_kv^j}_{s,R,r}:=\sum_{|\mu|+\mu_n\le s} \frac{|V^{jk}_\mu|}{|\mu|!\mu_n!} R^{|\mu|} r^{\mu_n}\leq C_0\e_0,
\end{equation}
for every $ s$. 
The idea is to estimate inductively l.o.t in \eqref{Taylor:estimate-Vj}.
Indeed, for $s\ge 2$
\begin{equation*}
\begin{split}
\text{l.o.t }=  &\sum_{k+\ell<2n}\sum_{\tau\le \sigma}\permut {\sigma} \tau \mu_n\partial^\tau A_{k\ell}^j \partial^{\sigma-\tau}\partial_{k\ell} v^j
+\sum_{0<\tau\le \sigma}\permut {\sigma} \tau \mu_n \partial^\tau A_{nn}^j \partial^{\mu-\tau}\partial_n v^j\\
&+ (\partial^{\mu} g^j-\sum_{i=1}^m \partial_{v^i_n}g^j V^i_\mu)=:I_1^\mu + I_2^\mu + I_3^\mu,
\end{split}
\end{equation*}
where all functions are restricted to $y=0$.
We start with the first term, $I_1^\mu$, and similar to Step 4 in Proposition \ref{prop:analyticity-general-case} we get
\[\begin{split}
&\sum_{\substack{|\mu|+\mu_n\le s\\ 1\le\mu_n}} \frac{ R^{|\mu|} r^{\mu_n}}{|\mu|!\mu_n!}|I_1^\mu| \le \\
&r \sum_{k+\ell<2n}\left(\sum_{|\mu|+\mu_n\le s-2} \frac{R^{|\mu|} r^{\mu_n}}{|\mu|!\mu_n!}\big|\partial^\mu A_{k\ell}^j\big|_{y=0}\right)
\left(\sum_{|\mu|+\mu_n\le s-1} \frac{R^{|\mu|} r^{\mu_n}}{|\mu|!\mu_n!}|V^{jk}_\mu| \right) \\
&\le \sum_{k+\ell<2n}\frac{MR_*r\norm{\partial_kv^j}_{s,R,r}}{R_*-\sum_{i,l}\norm{\partial_lv^i}_{s,R,r}} \le K_1rC_0\e_0,
\end{split}\]
where $K_1$ is a universal constant. 

The second term, $I_2^\mu$ will be controlled similarly to  Step 5 in Proposition \ref{prop:analyticity-general-case}  as follows,
\[\begin{split}
\sum_{\substack{|\mu|+\mu_n\le s\\ 1\le\mu_n}}& \frac{ R^{|\mu|} r^{\mu_n}}{|\mu|!\mu_n!}|I_2^\mu| \le 
\left(\sum_{|\mu|+\mu_n\le s-2} \frac{R^{|\mu|} r^{\mu_n}}{|\mu|!\mu_n!}\big|\partial^\mu A_{nn}^j\big|_{y=0}\right)
\left(\sum_{|\mu|+\mu_n\le s-1} \frac{R^{|\mu|} r^{\mu_n}}{|\mu|!\mu_n!}|V^{jn}_\mu| \right)\\
\leq & \frac{MR_*\sum_{i,l}\norm{\partial_lv^i}_{s,R,r}}{R_*-\sum_{i,l}\norm{\partial_lv^i}_{s,R,r}}\norm{\partial_kv^j}_{s,R,r} \le K_2 (C_0\e_0)^2.
\end{split}\]

Finally,  using the same approach as Step 6 in Proposition \ref{prop:analyticity-general-case} the third term can be estimated,
\[\begin{split}
\sum_{\substack{|\mu|+\mu_n\le s\\ 1\le\mu_n}}& \frac{ R^{|\mu|} r^{\mu_n}}{|\mu|!\mu_n!}|I_3^\mu| \le 
\frac{CMR_*(r+\sum_i \norm{v^i}_{s,R,r}+\sum_{i,l}\norm{\partial_lv^i}_{s-1,R,r}) }{R_*-(\sum_i \norm{v^i}_{s,R,r}+\sum_{i,l}\norm{\partial_lv^i}_{s-1,R,r})}\norm{\partial_n v^i}_{s-1,R,r}\\
&\le K_3C_0\e_0(r+\e_0).
\end{split}\]

All these estimate yields that
\[
\norm{\partial_kv^j}_{s,R,r} \le C_1\e_0 + K_1rC_0\e_0+ K_2(C_0\e_0)^2 + K_3C_0\e_0(r+\e_0) \le C_0\e_0,
\]
when $C_0=2C_1$ and $r, \e_0$ has been chosen sufficiently small.
\end{proof}

\begin{proof}[Proof of Theorem \ref{thm:existence}]
Assume $0\in \Gamma$ and let $\nu$ be the orthogonal unit vector on $\Gamma$. 
By a rotation in $\bR^n$ we may assume that $\nu(0)=\ee_n$ and the hypersurface $\Gamma$ is the graph of analytic function $\tilde v^1_0:\bR^{n-1}\ra\bR$ in a neighborhood of origin. 
We also  change the coordinate in the target space $\bR^m$ if necessary, and suppose that $V_+(0)=\ee_1$.
In order to solve \eqref{degenerate-eq}, define the Cauchy data $\vv_0=(v_0^1,\dots,v_0^m)$ that $v_0^1=\tilde v_0^1-y_n$ and
$v_0^j=(V_+\cdot\ee_j)/(V_+\cdot\ee_1)$ for $j=2,\dots,m$. 
We have $\vv_0(0)=0$ and $\nabla' v^1_0(0)=0$.
Notice that \eqref{degenerate-eq} is invariant under the following scaling 
\[
v^1_r(x)=\frac{v^1(rx)}r,\quad v^j_r(x)=v^j(rx),
\]
so  $|\nabla' v^j_r|$ is small enough for appropriate values of $r$. 
Hence, condition \eqref{cauchy-data-assumption} holds for $\vv_{0,r}$ and  
 \eqref{degenerate-eq} with Cauchy data $\vv_{0,r}$ has a local solution. Then rescale the solution back to the original problem with Cauchy data $\vv_0$.
Finally,  by the inverse of Hodograph-Legendre transform defined in Section \ref{Section:Hodograph} we will find the analytic local solution $\uu$.
\end{proof}


\section{Appendix}\label{Appendix}
We give a short proof for Theorem \ref{reg-deg-lap} here.  We first  introduce some auxiliary notion. 

We equip the upper half space $\bR^n_+:= \{ x\in \bR^n, x_n >0 \} $ with the Riemannian metric 
\[  g_x(v,w) = x_n^{-1} v \cdot w.\]
The length of a path $ \gamma\in C^1([a,b], \bR^n_+)$ in the upper half plane is 
\[ L(\gamma) = \int_a^b  |\gamma'| \gamma_n^{-1/2} dt   \]
and the corresponding distance 
\[  d(x,y)= \inf \{ L(\gamma): \gamma(a) = x, \gamma(b)=y\}.  \]
On vertical lines we have 
\[  d( (0,t), (0,s)) =  2 |t^{1/2}-s^{1/2}|, \] 
hence, if $ 0<x_n\le y_n \le T $ 
\[   d(x,y) \le   4 T^{1/2} -2 x_n^{1/2}-2y_n^{1/2}+  |x'-y'| T^{-1/2}. \]
Similarly, if $\gamma$ connects $x$ and $y$ and $T$ is the maximal height of $\gamma$ then 
\[  d(x,y) \ge   \max\{ 4 T^{1/2}- 2x_n^{1/2} -2 y_n^{1/2} , |x-y|T^{-1/2} \}.  \]
Optimization with respect to $T$ gives 
\begin{equation} d(x,y) \sim \frac{|x-y|}{\sqrt{ |x-y|+ x_n +y_n}} . \end{equation}

We also denote intrinsic balls by $B_r(x)$. Let 
\[  |A|_\gamma =   \int_A x_n^{\gamma} dx.  \]
Then 
\[ |B_r(x)|_{\gamma} \sim   (x_n+r^2)^{\gamma+\frac{n}2}   r^n.  \]

We claim that there exists a Green's function $G$ which satisfies (with a minor modification when $ n=\gamma=1$)  
\begin{equation} |B_{d(x,y)}(x) |_{\gamma-1}(x_n+ y_n+ d(x,y)^{2})^{(|\alpha|+|\beta|)/2}  d(x,y)^{|\alpha|+|\beta|-2}  \Big|         \partial_y^\alpha  \Big(  y_n^{\gamma-1} \partial_x^\beta G(x,y) \Big)    \Big| \le c_{|\alpha|+|\beta|}. \end{equation}
We begin with $n=1$, where, if $ \gamma \ne 1$ 
\[ G(x,y) =   \left\{   \begin{array}{rl} \frac1{1-\gamma} x^{1-\gamma}  & \text{ if } x>y \\ 
\frac1{1-\gamma} y^{1-\gamma} & \text{ if } x<y 
\end{array}  \right.      \]
resp. $\log(x)$ and $\log(y)$ if $ \gamma=1$. 
For $n \ge 2$ we rely on two following estimates, a standard elliptic estimate, and a boundary regularity estimate. 

\begin{lemma} \label{lem:local} 
Consider  $A \subset \mathbb{R}^n_+$ open, $ u \in H^{1}_{loc}(A)$
with $ x_n^{\gamma/2} \nabla u \in L^2(A)$. Suppose 
\[ \rdiv( x_n^{\gamma} \nabla u) = 0 \qquad \text{ on } A. \]
If $ r \le \frac12$ and $ A= B_{r}(e_n)$ then 
\[ \Vert \partial^\alpha u \Vert_{L^\infty(B_{r/2}(e_n))} \le  c_{|\alpha|} r^{1-|\alpha|-n/2} 
\Vert \nabla u \Vert_{L^2(B_r(e_n)) }.
\]
Moreover, if $A= B_1^+(0)$ and if $ \alpha$ is a multiindex of length at least $1$ then 
\[ \Vert \partial^\alpha u \Vert_{L^\infty (B_{1/2}^+(0))} \le c_{|\alpha|} \Vert \nabla u \Vert_{L^2(B_1^+(0), x_n^\gamma)}. \] 
\end{lemma}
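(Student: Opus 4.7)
The key structural observation is that $Lu := \operatorname{div}(x_n^\gamma \nabla u)$ has coefficients depending only on $x_n$: therefore $L$ is uniformly elliptic with smooth coefficients on any set bounded away from $\{x_n=0\}$, and it commutes with the tangential derivatives $\partial_1,\dots,\partial_{n-1}$ everywhere. These two facts drive the two parts of the lemma.

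For the first (interior) estimate, the ball $B_r(e_n)$ with $r\le \tfrac12$ lies inside $\{\tfrac12\le x_n\le \tfrac32\}$, where the weight $x_n^\gamma$ is bounded above and below. Rescale $v(y):=u(e_n+ry)$ on $B_1(0)$; then $v$ satisfies $\operatorname{div}_y((1+ry_n)^\gamma\nabla_y v)=0$, a uniformly elliptic divergence-form equation with smooth coefficients and constants independent of $r\in(0,\tfrac12]$. Standard interior regularity (Caccioppoli--Sobolev iteration together with Poincar\'e applied to $v-\bar v$; the additive constant drops out as soon as $|\alpha|\ge 1$, and for $|\alpha|=0$ the estimate is read in the quotient sense consistent with $W^{2,p}_*$) yields $\|\partial^\alpha v\|_{L^\infty(B_{1/2})}\le c_{|\alpha|}\|\nabla v\|_{L^2(B_1)}$. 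Undoing the rescaling produces the stated $r^{1-|\alpha|-n/2}$ factor by tracking Jacobians.

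For the second (boundary) estimate, I would first establish by induction on $|\alpha'|$, using Caccioppoli tested against $\eta^2\partial^{\alpha'}u$ with $\eta$ a cutoff supported in a shrinking half-ball, the weighted energy bound
\[
\int_{B_{3/4}^+}x_n^\gamma|\nabla\partial^{\alpha'}u|^2 \le C_{|\alpha'|}\int_{B_1^+}x_n^\gamma|\nabla u|^2
\]
for every tangential multiindex $\alpha'=(\alpha_1,\dots,\alpha_{n-1},0)$. This works because $\partial^{\alpha'}u$ is again a weak solution of $Lw=0$ and no $\partial_n$ derivative falls on $x_n^\gamma$. To pass from these weighted $L^2$ bounds on tangential derivatives to pointwise bounds, I combine them with the first estimate, which by translation in $x'$ and rescaling extends to any Euclidean ball $B_\rho(x_0)$ of radius $\rho\le x_{0,n}/2$ centered at an interior point $x_0$. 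On such a ball $x_n^\gamma \sim x_{0,n}^\gamma$, so weighted and unweighted $L^2$ norms are comparable up to a factor $x_{0,n}^{\gamma/2}$. Feeding a sufficiently high tangential derivative $\partial^{\alpha'}u$ into the pointwise interior estimate at $x_0$ with $\rho = x_{0,n}/2$, combining with the weighted tangential bound, and using tangential Sobolev embedding in $x'$ at height $x_{0,n}$, the unfavourable $x_{0,n}$-dependent factors cancel and one obtains a bound by $\|\nabla u\|_{L^2(B_1^+, x_n^\gamma)}$ uniform in $x_0\in B_{1/2}^+$. Pure normal derivatives ($\beta$ with $\beta_n\ge 1$) are then recovered from tangential data through the ODE form of the equation, $\partial_n(x_n^\gamma\partial_n u) = -x_n^\gamma \Delta' u$, integrated in $x_n$ from the boundary; the flux $x_n^\gamma \partial_n u$ vanishes at $x_n=0$ thanks to $\nabla u\in L^2(x_n^\gamma)$.

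The principal obstacle is the counting in the last step: one must choose the number of tangential derivatives precisely so that the pointwise interior estimate, combined with weight comparability on $B_{x_{0,n}/2}(x_0)$, produces bounds uniform up to the boundary $\{x_n=0\}$. This is essentially the content of classical weighted Sobolev theory for weights depending on a single coordinate, and is standard once the structure is set up, but requires careful bookkeeping.
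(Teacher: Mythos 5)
Your treatment of the first (interior) estimate is correct and is exactly what the paper means by ``standard elliptic estimate'': rescaling to $B_1(0)$ gives a uniformly elliptic divergence-form operator with $r$-uniform coefficient bounds, and undoing the scaling produces the $r^{1-|\alpha|-n/2}$ factor.

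The boundary estimate, however, has a genuine gap in the middle step. You are right that tangential Caccioppoli on $\eta^2\partial^{\alpha'}u$ gives $\Vert x_n^{\gamma/2}\nabla\partial^{\alpha'}u\Vert_{L^2(B_{3/4}^+)}\le C\Vert x_n^{\gamma/2}\nabla u\Vert_{L^2(B_1^+)}$ for tangential $\alpha'$, and you are right that pure $\partial_n$-derivatives can be recovered from the ODE $\partial_n(x_n^\gamma\partial_n u)=-x_n^\gamma\Delta'u$; these two ingredients are precisely the ones the paper uses. But the claimed cancellation in between does not occur. If you apply the scale-invariant interior estimate on $B_{\rho}(x_0)$ with $\rho=x_{0,n}/2$ and then compare weights (since $x_n\sim x_{0,n}$ there), you obtain
\[
|\partial^\alpha u(x_0)| \;\le\; c\, x_{0,n}^{\,1-|\alpha|-n/2-\gamma/2}\,\Vert x_n^{\gamma/2}\nabla u\Vert_{L^2(B_{x_{0,n}/2}(x_0))},
\]
with a strictly negative exponent of $x_{0,n}$. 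The tangential Caccioppoli bound on the right-hand side is uniform in $x_0$, so it supplies no compensating positive power of $x_{0,n}$; and a trace-plus-tangential-Sobolev step at height $x_{0,n}$ only introduces a further factor $x_{0,n}^{-\gamma/2}$, since passing from $\Vert x_n^{\gamma/2}\partial_n g\Vert_{L^2}$ to $\Vert g(\cdot,x_{0,n})\Vert_{L^2(dx')}$ again costs a negative power of $x_{0,n}$. There is simply no source of a positive power of $x_{0,n}$ in the argument, so the resulting bound blows up at the boundary.

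What closes the argument (and what the paper actually does) is a weighted Sobolev embedding in all $n$ variables,
\[
\Vert f\Vert_{L^\infty(B_{1/2}^+)} \;\le\; c\Big(\sum_{|\alpha|\le N}\Vert \partial^\alpha f\Vert^2_{L^2(B_{1/2}^+,\,x_n^\gamma)}\Big)^{1/2},
\]
valid for $N$ sufficiently large (depending on $n$ and $\gamma$), applied to $f=\partial^\alpha u$, together with a weighted Poincar\'e inequality to fix the constant. This requires controlling the weighted $L^2$ norms of \emph{all} derivatives, tangential and normal; the tangential ones come from Caccioppoli, and the normal ones then follow from the rewritten equation as in Lemma~\ref{estimate-normal-derivative}. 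Your ODE step should therefore be run \emph{before} passing to $L^\infty$, at the level of $L^2(x_n^\gamma)$ estimates, rather than after. If you replace the interior-estimate-plus-slice-Sobolev passage with this weighted Sobolev embedding, your proof becomes essentially the paper's.
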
 
In this setting Euclidean balls and intrinsic balls are equivalent. 
\begin{proof} 
The first estimate is a standard elliptic estimate.  The weighted Poincar\'e inequality 
\[ \inf_\lambda \Vert u-c \Vert_{L^2(x_n^\gamma)(B_1^+(0),x_n^{\gamma})}  \le c 
\Vert \nabla u \Vert_{L^2(B_1^+(0), x_n^\gamma)} \]
and the Sobolev inequality 
\[ \Vert  u \Vert_{L^\infty(B^+_{1/2}(0))} 
\le c \Big( \sum_{|\alpha|\le N} \Vert \partial^\alpha u \Vert^2_{L^2(B^+_{1/2}(0), x_n^\gamma) }\Big)^{1/2} 
\]
for $N$ sufficiently large reduce 
 the second estimate to the $L^2$ estimate 
\[ \Vert \partial^{\alpha} u \Vert_{L^2(B^+_{1/2}(0), x_n^\gamma)}
\le c \Vert u \Vert_{L^2(B_1^+(0),x_n^\gamma)}.
\]
The energy estimate 
\[   \Vert \nabla u \Vert_{L^2(B^+_{1/2}, x_n^\gamma)} \le c \Vert u \Vert_{L^2(B^+_{1}(0), x_n^\gamma)} \]
can be proven by a variation of standard arguments. We apply it to tangential finite differences and tangential derivatives iteratively to deduce the desired estimate for tangential derivatives. Now we rewrite the equation as 
\[ \partial_x x_n^{\gamma} \partial_n u = - \sum_{j=1}^{n-1}  x_n^\gamma \partial^2_j u. \]
We complete the proof by arguing as in Lemma \ref{estimate-normal-derivative}.
\end{proof}

We consider weak solutions to
\[   \rdiv( x_n^{\gamma} \nabla u )= \rdiv ( x_n^\gamma F ).   \]
Let $H= \dot H^1_\gamma (x_n^\gamma)$ the Hilbert spaced defined by  the inner product 
\[ \langle u,v \rangle = \int x_n^\gamma \nabla u \cdot \nabla v dx.  \]
We consider equivalence classes modulo constants. The map 
\[  \dot H^1_\gamma \ni \phi \to   \int x_n^\gamma \nabla \phi F dx \]
is an element of the dual space. By the Riesz representation theorem there exits a unique $u\in \dot H^1_\gamma$ so that 
\[ \langle u, \phi \rangle = \int x_n^\gamma \nabla u \nabla \phi dx = \int x_n^\gamma F \nabla \phi dx \]
for all $ \phi \in \dot H^1_\gamma$. The map 
\[ (L^2(x_n^\gamma))^n  \ni F \to u \in \dot H^1_\gamma \] 
is linear, continuous. The composition 
\[  A : \dot H^1_\gamma \ni v \to F=\nabla v \to u \in \dot H^1_\gamma \]
is self adjoint. 
Let $ x,y $ be in the upper half plane and $ F$  supported in $B_{d(x,y)/4} (y)$ and let $u$ be the solution. Then, by rescaling Lemma \ref{lem:local}  
\[ |B_{d(x,y)/4}(x)|_{\gamma}^{1/2}  d(x,y)^{|\alpha|-1}  \Vert (x_n^{1/2} + d(x,y) )^{|\alpha|} \partial^\alpha u \Vert_{L^\infty(B_{d(x,y)/4})}  
\le  c_{|\alpha|} \Vert x_n^{\gamma/2} F \Vert_{L^2} 
\]
for $ \alpha \ne 0$. 
We define $\dot H^1_\gamma(A)$ in the obvious way and 
\[ H^1_\gamma = \dot H^1_\gamma \cap L^2(x_n^{\gamma-1}). \]
 There is variant of the Poincar\'e inequality, 
\begin{lemma} 
Let $B=B_r(x)$ be an intrinsic ball. Then 
\[    \Vert u-u_{B}   \Vert_{L^2(B, x_n^{\gamma-1}) } \le  c  (1+ x_n)^{-1/2}   r^{-1}  
\Vert u \Vert_{\dot H^1_\gamma(B) }. \]
\end{lemma}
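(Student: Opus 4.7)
The strategy is to reduce to classical weighted Poincar\'e inequalities via a case split based on whether the intrinsic ball $B_r(x)$ stays away from $\{y_n=0\}$ or touches it. Recall that by the formula $d(x,y)\sim |x-y|/\sqrt{|x-y|+x_n+y_n}$, an intrinsic ball of radius $r$ centered at a point of height $x_n$ has Euclidean diameter $\sim r\sqrt{x_n+r^2}$, with vertical extent of the same order. The two regimes $x_n\gg r^2$ (interior) and $x_n\lesssim r^2$ (boundary) have qualitatively different structure, so I would handle them separately and combine via a Whitney-type chaining argument.

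In the interior regime $x_n\gg r^2$, all $y\in B_r(x)$ satisfy $y_n\sim x_n$, so the weights $y_n^\gamma$ and $y_n^{\gamma-1}$ are comparable to the constants $x_n^\gamma$ and $x_n^{\gamma-1}$, and $B_r(x)$ is comparable to a Euclidean ball $B^E$ of radius $\rho\sim r\sqrt{x_n}$. The standard Euclidean Poincar\'e inequality gives
\[ \|u-u_{B^E}\|_{L^2(B^E)}\le c\rho\|\nabla u\|_{L^2(B^E)}, \]
and multiplying by the (constant) weights produces a bound of the form $\|u-u_B\|_{L^2(B,y_n^{\gamma-1})}\le c\rho\,x_n^{-1/2}\|\nabla u\|_{L^2(B,y_n^\gamma)}$, which after absorbing $\rho\,x_n^{-1/2}$ into the claimed prefactor closes the interior case.

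In the boundary regime $x_n\lesssim r^2$ the ball is comparable to a cylindrical cap $Q=B'_{r^2}(x')\times(0,cr^2]$, and the weight genuinely degenerates. Here the right tool is a one-dimensional Hardy--Poincar\'e inequality on $(0,r^2)$ with weights $t^{\gamma-1}\,dt$ on the left and $t^\gamma\,dt$ on the right, proved most transparently by the substitution $t=2y_n^{1/2}$ that straightens the metric along vertical lines (as used in the length computation of the excerpt). Integrating this estimate over the tangential directions, combined with a tangential (classical) Poincar\'e at each fixed height, yields the analogous inequality on $Q$ and hence on $B$.

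The main obstacle is the boundary regime, specifically the need to pick a single constant $u_B$ that controls $u$ in the integrated weighted $L^2$ norm. I would overcome this by a chaining/telescoping argument: fix a ``core'' sub-ball $B_*\subset B$ concentrated at heights $y_n\sim r^2$ (where the weight is non-degenerate and classical Poincar\'e applies) and take $u_B$ to be its weighted average; then propagate from $B_*$ outward and downward using the Hardy--Poincar\'e step on vertical fibers and standard Poincar\'e on tangential slices. The propagation costs remain absorbable provided $\gamma>0$, which guarantees integrability of $y_n^{\gamma-1}$ up to the boundary and is in any case the standing hypothesis under which $\rdiv(x_n^\gamma\nabla\,\cdot\,)$ is elliptic.
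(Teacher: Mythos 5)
The paper states this lemma without proof, so there is nothing in the text to compare your argument against; I am evaluating it on its own terms.

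Your case split — interior balls $x_n\gg r^2$, where the weights are essentially constant and $B_r(x)$ is a Euclidean ball of radius $\rho\sim r\sqrt{x_n}$, versus boundary balls $x_n\lesssim r^2$, where one uses a one--dimensional Hardy--Poincar\'e inequality with weights $t^{\gamma-1}$ and $t^{\gamma}$ on vertical fibers, a classical Poincar\'e on tangential slices, and a chaining step from a non\-degenerate core sub-ball — is the natural approach and the ingredients you list do assemble into a proof. Two small corrections on the architecture: the Whitney-type chaining is only needed \emph{inside} the boundary regime (the two regimes are disjoint, so there is nothing to chain between them), and replacing $u_B$ by the average over the core sub-ball is harmless by the usual ``best constant differs by a factor of two'' observation, which is worth saying explicitly.

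The real problem is the interior case, where you pass over a contradiction. Your (correct) computation gives a Poincar\'e constant $\rho\,x_n^{-1/2}=r$, and that is sharp: with $u(y)=y_1$ on an interior ball, $\Vert u-u_B\Vert_{L^2(B,y_n^{\gamma-1})}\big/\Vert\nabla u\Vert_{L^2(B,y_n^{\gamma})}\sim r$. But the printed prefactor is $(1+x_n)^{-1/2}r^{-1}$, and $r\le c\,(1+x_n)^{-1/2}r^{-1}$ fails whenever $r^2(1+x_n)^{1/2}$ is large; e.g.\ $x_n=100$, $r=5$ is an interior ball for which the printed inequality is violated by $u=y_1$. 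So ``absorbing $\rho\,x_n^{-1/2}$ into the claimed prefactor'' does not close the interior case — those two quantities simply do not compare. What your argument actually establishes, in both regimes and after rescaling the unit half-ball by $R\sim r^2$ in the boundary case, is
\[
\Vert u-u_B\Vert_{L^2(B,\,y_n^{\gamma-1})}\le c\,r\,\Vert\nabla u\Vert_{L^2(B,\,y_n^{\gamma})},
\]
which is also what intrinsic scale invariance predicts. In other words, the lemma as printed appears to contain a typographical slip (an inverted power of $r$, and a stray $(1+x_n)^{\pm1/2}$), and your computation is evidence of this rather than a proof of the statement verbatim. You should state the corrected inequality and flag the discrepancy, not elide it.
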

Given disjoint balls $B_1,B_2$ we define the map 
\[A_{B_1,B_2 }: H^1_\gamma (B_1) \ni v \to  F = \nabla v \to u- u_{B_2} \in H^1_\gamma(B_2) \]
so that 
\[  A_{B_1,B_2}^* = A_{B_2,B_1} \]
Let $ x\in B_2$. The map 
\[  v \to   \partial^\alpha Av (x) \]
lies in $(H^1_\gamma(B_1))^*$. We apply the Riesz representation theorem to obtain 
\[  \partial^\alpha Av(x) = \int_{B_1} \partial_x^\alpha G(x,y)\rdiv(  y_n^\gamma \nabla v) dx     \] 
and by selfadjointness 
\[  y \to w(y) = \partial_x^\alpha G(x,y) \]
satisfies 
\[  \rdiv( x_n^\gamma \nabla w )= 0 \]
in $B_1$. We obtain for $ |\alpha| \ge 1 $ and $ |\beta| \ge 1$
\[  \Big(  ( x_n^{\frac12} +y_n^{\frac12} + d(x,y))   d(x,y)\Big)^{|\alpha|+|\beta|-2} |B_{d(x,y)}(x)|_\gamma\,  | \partial^{\alpha}_y  \partial^{\beta}_x  G(x,y) | \le c       \]
In particular, if $ |\alpha|\ge 1$. 
\[  \Big(  ( x_n^{\frac12} +y_n^{\frac12} + d(x,y))   d(x,y)\Big)^{|\alpha|-1} |B_{d(x,y)}(x)|_\gamma\,  | \partial^{\alpha}_y  \partial_{x_j}    G(x,y) | \le c.        \]
This allows to fix the constant by assuming 
\[ \lim_{x_n \to \infty}  \partial_{y_j} G(x,y)=0  \]
hence, if $ |\alpha|\ge 1$ 
\[   |B_{d(x,y)}(x)|_\gamma \Big( (x_n^{\frac12} +y^{\frac12}_n + d(x,y))(d(x,y)) \Big)^{|\alpha|}   |  \partial^\alpha_y G(x,y) | \le c.   \]
We recall that $ n \ge 2$ and repeat the argument and normalize 
\[ \lim_{x_m\to \infty} G(x,y) = 0. \]
If $ n \ge 3$ we arrive at 
\begin{equation}   |B_{d(x,y)}(x)|_{\gamma-1}  d^2(x,y)  |  G(x,y) | \le c.   \end{equation}
and if $n=2$
\begin{equation}   |B_{d(x,y)}(x)|_{\gamma-1}  d^2(x,y)     ) |    |  G(x,y) | \le c \log\Big( 2+ \frac{x_n+y_n}{d^2(x,y)}\Big) .  \end{equation}

\begin{proof}[Proof of Theorem \ref{reg-deg-lap}]
First, we claim that (have to make the conditions on $u$ precise) 
\begin{equation} \label{eq:L2estimate}    \Vert x_n D^2 u \Vert_{L^2(x_n^{\gamma-1})}  + \Vert Du \Vert_{L^2(x_n^{\gamma-1})} \le c \Vert x_n \Delta u + \gamma \partial_n u \Vert_{L^2(x_n^{\gamma-1})} \end{equation}
We do a formal argument, which can be easily justified. First 
\[  \int \rdiv ( x_n^\gamma \nabla u) \partial_n u dx 
= - \int x_n^\gamma \nabla u \nabla\partial_{n} u dx 
= \frac{\gamma}2 \int x_n^{\gamma-1} |\nabla u|^2 dx 
\]
and by Cauchy-Schwarz 
\[ \Vert \nabla u \Vert_{L^2(x_n^{\gamma-1})} \le \frac{2}{\gamma} \Vert x_n \Delta u + \gamma u_n \Vert_{L^2(x_n^{\gamma-1})}\] 
Similarly we multiply with $x_n \Delta u$ and integrate by parts to arrive at \eqref{eq:L2estimate}.

Consider 
\[    \rdiv( x_n^\gamma \nabla u ) = x_n^{\gamma-1} f \] 
and let $T$ by any of the maps 
\[    L^2(x_n^{\gamma/2} ) \ni f \to   \partial_j u, x_n \partial^2_{jk} u \in L^ 2(x_n^{\gamma/2}). \]
It satisfies 
\[ \Vert Tf \Vert_{L^2(x_n^{\gamma-1})} \le c \Vert f \Vert_{L^2(x_n^{\gamma-1})}. \]
It has an integral kernel $k$ which satisfies 
\[   | B_{d(x,y)}(x)|_{\gamma-1} \le C      \]
\[   |B_{d(x,y)}(x)|_{\gamma-1} ( |k(x,y)-k(\tilde x, \tilde y)| ) \le c \frac{d(x,\tilde x), d(y,\tilde y)}{d(x,y) + d(\tilde x, \tilde y) } .  \]
The upper half plane equipped with the metric $d$ and the measure $ x_n^{\gamma-1} dx $ is doubling: 
\[  |B_{3r}(x)|_{\gamma-1} \le c |B_r(x)|_{\gamma-1}. \]
Thus $T$ and its adjoint are Calder\'on-Zygmund operators (see \cite{stein1993harmonic}) and satisfy 
\[  \Vert Tf \Vert_{L^p(x_n^{\gamma-1}) } \le c \Vert f \Vert_{L^p(x_n^{\gamma-1}) }. \]
Let us recall the notion of Muckenhoupt weights: Let $1 < p < \infty$. We say that $ \omega \in A_p $ if and only if 
\[ \sup_B  \left( |B|^{-1}_{\gamma-1}  \int_B  \omega dx \right) \left( |B|^{-1}_{\gamma-1} \int_B \omega^{-\frac{p'}{p}}\right)^{\frac{p}{p'}} < \infty.   \]
See \cite{stein1993harmonic} for a Muckenhoupt weights in $R^n$ and \cite{MR2990061} for an extensions to spaces of homogeneous type which cover our setting. 
If $ 1< p < \infty $ and $ \omega \in A_p $ then 
\[ \int |Tf|^p \omega x_n^{\gamma-1} dx \le c \int |f|^p \omega x_n^{\gamma-1} dx.  \]
Is is not hard to  verify  that $ x_n^{1-\gamma}$ is a Muckenhoupt weight. 

This completes the proof of the first part. 
For the second part, differentiate the equation with respect to a tangential direction $\partial_k$, $k<n$. Then $\partial_k u$ satisfies a similar equation and according to the first part,
\[
\norm{\partial_ku}_{W^{2,p}_*(\bR^n_+)}\leq C_{n, p, \gamma}\norm{\partial_kf}_{L^p(\bR^n_+)}.
\]
Now if we differentiate \eqref{deg-lap} again with respect to $\partial_n$, we have 
\[
 y_n \Delta \partial_n u + (\gamma + 1)\partial_n(\partial_n u) = \partial_n f - \Delta' u.
\]
Therefore,
\[
\norm{\partial_nu}_{W^{2,p}_*(\bR^n_+)}\leq C_{n, p, \gamma+1}\norm{\partial_nf}_{L^p(\bR^n_+)}+\sum_{k=1}^{n-1}\norm{\partial_ku}_{W^{2,p}_*(\bR^n_+)},
\]
and the estimate holds for $\tilde C_{n, p, \gamma}=2C_{n, p, \gamma}+C_{n, p, \gamma+1}$.
\end{proof}

\medskip

\paragraph{\bf{Acknowledgements.} }
Morteza Fotouhi was supported by Iran National Science Foundation (INSF) under project No. 99031733. Herbert Koch was partially supported by
the Deutsche Forschungsgemeinschaft (DFG, German Research Foundation) through the
Hausdorff Center for Mathematics under Germany’s Excellence Strategy - EXC-2047/1 -
390685813 and through CRC 1060 - project number 211504053.

\section*{Declarations}

\noindent {\bf  Data availability statement:} All data needed are contained in the manuscript.

\medskip
\noindent {\bf  Funding and/or Conflicts of interests/Competing interests:} The authors declare that there are no financial, competing or conflict of interests.


\begin{thebibliography}{99}

\bibitem{aleksanyan2022regularity}
Aleksanyan, G.,  Fotouhi, M.,  Shahgholian, H. and Weiss, G. S.,
{\it Regularity of the free boundary for a parabolic cooperative system},
Calculus of Variations and Partial Differential Equations 61, no. 4 (2022) 124.


\bibitem{alt1986free}
Alt, H.W. and Phillips, D.,
{\it  A free boundary problem for semilinear elliptic equations},
 Journal f{\"u}r die reine und angewandte Mathematik,
  368 (1986), 63--107.

\bibitem{asuw15}
Andersson, J.,  Shahgholian, H., Uraltseva, N. and  Weiss, G.S.,
{\it Equilibrium points of a singular cooperative system with free boundary},
 Adv. Math. 280 (2015), 743--771.

\bibitem{angenent1990nonlinear}
Angenent, S., 
{\it Nonlinear analytic semiflows},
Proceedings of the Royal Society of Edinburgh Section A:  Mathematics, 
115 (1990),  no. 1-2, 91--107.

\bibitem{angenent1990parabolic}
Angenent, S., 
{\it Parabolic equations for curves on surfaces part I. curves with p-integrable curvature},
 Annals of Mathematics, 132 (1990), no. 3, 451--483.

\bibitem{MR1474840}
Daskalopoulos, P. and Hamilton, R.,
{\it The free boundary for the {$n$}-dimensional porous medium equation},
 Internat. Math. Res. Notices, (1997), no. 17, 817--831.

\bibitem{FS17}
Fotouhi M., and Shahgholian, H., 
{\it A semilinear pde with free boundary},
 Nonlinear Anal. 151 (2017), 145--163.

\bibitem{FSW20}
Fotouhi, M., Shahgholian, H. and Weiss, G.S.,
{\it A free boundary problem for an elliptic system},
Journal of Differential Equations 284 (2021), 126--155.

\bibitem{MR1009785}
Friedman, A.,
 Variational principles and free-boundary problems,
 Robert E. Krieger Publishing Co., Inc., Malabar, FL, second edition,  1988.

\bibitem{MR2990061}
Hyt\"{o}nen, T., P\'{e}rez, C. and Rela, E.,
{\it Sharp reverse {H}\"{o}lder property for {$A_\infty$} weights on  spaces of homogeneous type},
 J. Funct. Anal. 263 (2012), no.12, 3883--3899.

\bibitem{kinderlehrer1977regularity}
Kinderlehrer D. and Nirenberg, L.,
{\it Regularity in free boundary problems},
 Annali della Scuola Normale Superiore di Pisa-Classe di Scienze, 
 4 (1977), no. 2, 373--391.

\bibitem{kochnon}
Koch, H.,
{\it  Non-Euclidean Singular Integrals and the Porous Medium  Equation},
 Habilitation Thesis, Universit\"at Heidelberg, Germany, 1999.


\bibitem{koch2012geometric}
 Koch, H. and  Lamm, T.,
{\it Geometric flows with rough initial data},
 Asian Journal of Mathematics, 16 (2012), no. 2, 209--235.

\bibitem{koch2015higher}
 Koch, H., Petrosyan, A. and  Shi, W.,
{\it Higher regularity of the free boundary in the elliptic signorini problem},
 Nonlinear Analysis, 126 (2015), 3--44.

\bibitem{koch2017variable}
 Koch, H.,  R{\"u}land, A. and  Shi, W.,
{\it The variable coefficient thin obstacle problem: higher regularity},
 Advances in Differential Equations, 22 (2017), no. 11-12, 793--866.

\bibitem{koch2019higher}
 Koch, H.,  R{\"u}land, A. and  Shi, W.,
{\it Higher regularity for the fractional thin obstacle problem},
 New York Journal of Mathematics, 25 (2019) 745--838.

\bibitem{phillips1983hausdoff}
Phillips, D.,
{\it  Hausdoff measure estimates of a free boundary for a minimum problem},
 Communications in Partial Differential Equations,
  8 (1983), no. 13,1409--1454.

\bibitem{phillips1983minimization}
Phillips, D.,
{\it A minimization problem and the regularity of solutions in the  presence of a free boundary},
 Indiana University Mathematics Journal, 32 (1983), no. 1, 1--17.

\bibitem{stein1993harmonic}
Stein, E.M.,
{\it Harmonic analysis: real-variable methods, orthogonality and  oscillatory integrals}, volume~43,
 Princeton University Press, 1993.

\bibitem{wu2022fully}
Wu Y. and Yu, H.,
{\it  On the fully nonlinear alt-phillips equation},
International Mathematics Research Notices,
  2022, no. 11, 8540--8570.



\end{thebibliography}
\end{document}